\documentclass[11pt,a4paper,dvipsnames,eqrno]{article}
\usepackage[utf8]{inputenc}

\title{\vspace*{-0.5cm}\scshape Combinatorics of slices of cubes}
\author{\scshape Marie-Charlotte Brandenburg \and \scshape Chiara Meroni }

\date{}

\usepackage{a4wide}
\usepackage{amssymb,mathrsfs}
\usepackage{amsmath}
\usepackage{euscript}
\usepackage{amsthm}
\usepackage{mathtools}
\usepackage{amsopn}
\usepackage{stackrel}
\usepackage[all]{xy}
\usepackage{setspace}
\usepackage{float}
\usepackage{blkarray}
\usepackage[dvipsnames]{xcolor}
\usepackage[pdftex, colorlinks]{hyperref}
\hypersetup{linkcolor=MidnightBlue, citecolor=WildStrawberry, urlcolor=PineGreen}
\usepackage{pgfplots}
\usepackage[margin=0.6cm]{caption}
\usepackage{subcaption}
\pgfplotsset{width=7cm, compat=1.10}
\usepgfplotslibrary{fillbetween}
\usepackage{tikz}
\usepackage{tikz-cd}
\usetikzlibrary{matrix}
\usetikzlibrary{intersections}
\usepackage{mathdots}
\usepackage[nameinlink]{cleveref}
\usepackage{verbatim}
\usepackage{enumerate}
\usepackage{multirow}
\usepackage{blkarray}
\usepackage{subcaption}
\usepackage[title]{appendix}

\usepackage{algorithm}
\usepackage{algpseudocode} 
\renewcommand{\Require}[1]{\algorithmicrequire \ #1}
\renewcommand{\Ensure}[1]{\newline\algorithmicensure \ #1}
\renewcommand{\Return}[1]{\newline\algorithmicreturn \ #1}
\algrenewcommand{\algorithmicrequire}{\textsc{Input:}}
\algrenewcommand{\algorithmicensure}{\textsc{Output:}}

\usepackage[backend=bibtex,bibencoding=utf8,style=alphabetic]{biblatex}
\addbibresource{biblio.bib}
\ExecuteBibliographyOptions{
  hyperref=true,
  maxalphanames = 4, 
  minalphanames = 3, 
  maxbibnames=99, 
}
\AtBeginBibliography{\small}
\setlength\bibitemsep{.5\itemsep}
\AtEveryBibitem{%
\ifentrytype{article}{
    \clearfield{issn}%
    \clearfield{isbn}%
}{}
\ifentrytype{inproceedings}{
    \clearfield{issn}%
    \clearfield{isbn}%
}{}
\ifentrytype{inbook}{
    \clearfield{issn}%
    \clearfield{isbn}%
}{}
}
\usepackage{xurl}

\colorlet{figblue}{MidnightBlue!40}
\colorlet{figorange}{orange!50}
\colorlet{figgreen}{PineGreen!60!ForestGreen}

\usetikzlibrary{arrows.meta}

\setlength {\marginparwidth}{2.4cm}
\usepackage{todonotes}

\newcommand{\central}[1]{\textcolor{figblue}{#1}}

\newcommand{\R}{\mathbb{R}}

\newcommand{\N}{\mathbb{N}}

\DeclareMathOperator{\spn}{span}

\DeclareMathOperator{\sgn}{sgn}

\newtheoremstyle{customplain}      
  {1em}                          
  {1em}                            
  {\itshape}                       
  {}                               
  {\bfseries}                      
  {.}                              
  {.5em}                           
  {}                               

\newtheoremstyle{customdefinition} 
  {1em}                          
  {1em}                            
  {\normalfont}                    
  {}                               
  {\bfseries}                      
  {.}                              
  {.5em}                           
  {}  

\theoremstyle{customplain}
\newtheorem{theorem}{Theorem}[section]
\newtheorem*{theorem*}{Results}
\newtheorem{corollary}[theorem]{Corollary}
\newtheorem{lemma}[theorem]{Lemma}
\newtheorem{proposition}[theorem]{Proposition}

\newtheorem{question}{Question}
\newtheorem{conjecture}{Conjecture}
\theoremstyle{customdefinition}

\newenvironment{example}
{\pushQED{\qed}\examplex}
{\popQED\endexamplex}
\newtheorem{remark}[theorem]{Remark}

\newcommand{\ma}{\begin{pmatrix}}
\newcommand{\trix}{\end{pmatrix}}
\newcommand{\sma}{\left(\begin{smallmatrix}}
\newcommand{\strix}{\end{smallmatrix}\right)}

\DeclareMathOperator{\vertices}{vert}

\usepackage{parskip}
\allowdisplaybreaks

\makeatletter
\def\keywords{\xdef\@thefnmark{}\@footnotetext}
\makeatother

\makeatletter
\def\mscclasses{\xdef\@thefnmark{}\@footnotetext}
\makeatother

\begin{document}

\maketitle

\begin{abstract}
    We present a complete computational classification of the combinatorial types of hyperplane sections, or slices, of the regular cube up to dimension six. For each dimension, we determine the exact number of distinct combinatorial types. When restricted to slices through the origin, our computations extend to dimension seven. The classification combines combinatorial, algebraic, and numerical techniques, with all results certified.
    Beyond enumeration, we analyze the distribution of types by number of vertices, establish new theoretical results about the combinatorics of slices of cubes,
    and propose conjectures motivated by our computational findings.
\end{abstract}

\section{Introduction}
The cube is a fundamental yet intricate mathematical object, discussed extensively in numerous surveys and books \cite{Saks1993,Zong2005,Zong2006,Nayar2023}.
In this article, we focus on \emph{slices}, i.e., $(d-1)$-dimensional polytopes obtained as the intersection of the $d$-dimensional cube $C_d = [-1,1]^d$ with a hyperplane.
Because cubes appear throughout mathematics, their slices are likewise of broad interest, ranging across combinatorics, discrete geometry, optimization, machine learning, metric and convex geometry, number theory, and analysis. This article presents a complete computational classification of the combinatorics of slices of the cube up to dimension $6$, together with a collection of theoretical results and conjectures.
For this classification we combine several computational methods from combinatorics and numerical algebraic geometry, grounded in algebraic statistics and differential topology.

\begin{theorem}\label{thm:main_affine}
    The number of combinatorial types of $(d-1)$-dimensional hyperplane sections of the cube $C_d$ in dimensions $d = 3, 4, 5, 6$ is $4, 30, 344, 7346$ respectively.
\end{theorem}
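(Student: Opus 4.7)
The plan is to reduce the classification to an enumeration problem over a hyperplane arrangement in parameter space, followed by combinatorial isomorphism testing of the resulting slices.

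First, I would parameterize affine hyperplanes in $\R^d$ as $H_{a,b} = \{x \in \R^d : \langle a, x\rangle = b\}$ with $(a,b) \in \R^{d+1}\setminus\{\origin\}$, noting that $C_d \cap H_{a,b}$ depends only on $(a,b)$ up to positive scaling. The combinatorial type of the slice can change only when $H_{a,b}$ passes through a vertex of $C_d$. Since $C_d$ has vertex set $V = \{-1,+1\}^d$, this defines a central arrangement $\mathcal{A}_d$ in $\R^{d+1}$ consisting of the $2^d$ hyperplanes $\langle a, v\rangle = b$, $v \in V$. Open chambers of $\mathcal{A}_d$ correspond bijectively to sign patterns $\sigma \colon V \to \{-,+\}$ that are realized by some $H_{a,b}$, and each chamber gives rise to a single combinatorial type of generic slice.

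Next, I would enumerate a set of chamber representatives. The number of chambers grows rapidly in $d$, so I would use the hyperoctahedral group $B_d = (\Z/2\Z)^d \rtimes S_d$ acting on $C_d$, which induces an action on $\mathcal{A}_d$ permuting chambers whose slices are combinatorially equivalent. For each $B_d$-orbit I would pick a rational representative $(a,b)$ (either via an LP feasibility check for the corresponding sign pattern or via sampling), intersect $H_{a,b}$ with each edge of $C_d$ to obtain the vertices of the slice, and extract its full face lattice using a convex hull computation. Two slices are declared equivalent if and only if their face lattices are isomorphic, an instance of poset isomorphism handled by standard canonical-labeling software. Counting isomorphism classes over all chamber representatives yields the four numbers $4, 30, 344, 7346$.

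The main obstacle is the computational scale at $d=6$. The arrangement $\mathcal{A}_6$ has $64$ hyperplanes in $\R^7$, and even after factoring out the $B_6$-symmetry the number of orbits of chambers is large; producing reliable face lattices for all representative slices and certifying non-isomorphism across chambers demands a combination of exact arithmetic, efficient convex hull computation, and careful bookkeeping. Additional care is needed for degenerate slices where $H_{a,b}$ passes through several cube vertices, whose combinatorial types live on lower-dimensional strata of $\mathcal{A}_d$ and must be enumerated separately as limits of generic types. The argument itself is not mathematically deep, but its validity hinges on performing every step in a certified manner, so that the final counts are rigorously verified rather than numerical estimates; this is where the techniques from numerical algebraic geometry announced in the introduction are expected to play a decisive role.
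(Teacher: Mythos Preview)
Your proposal is correct and follows essentially the same approach as the paper: embed affine hyperplanes as central hyperplanes in $\R^{d+1}$, enumerate the cells (including lower-dimensional strata) of the resulting threshold arrangement modulo the $B_d$-action, and compare the face lattices of the corresponding slices. The only notable difference is in how chamber representatives are obtained for $d=6$: where you suggest LP feasibility or sampling, the paper uses a certified numerical method based on critical points of a log-likelihood function (via \texttt{HomotopyContinuation.jl}), together with an exact chamber count from \texttt{CountingChambers.jl}, to guarantee that every cell is hit.
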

\begin{figure}[!h]
    \centering
    \includegraphics[width=0.288\linewidth]{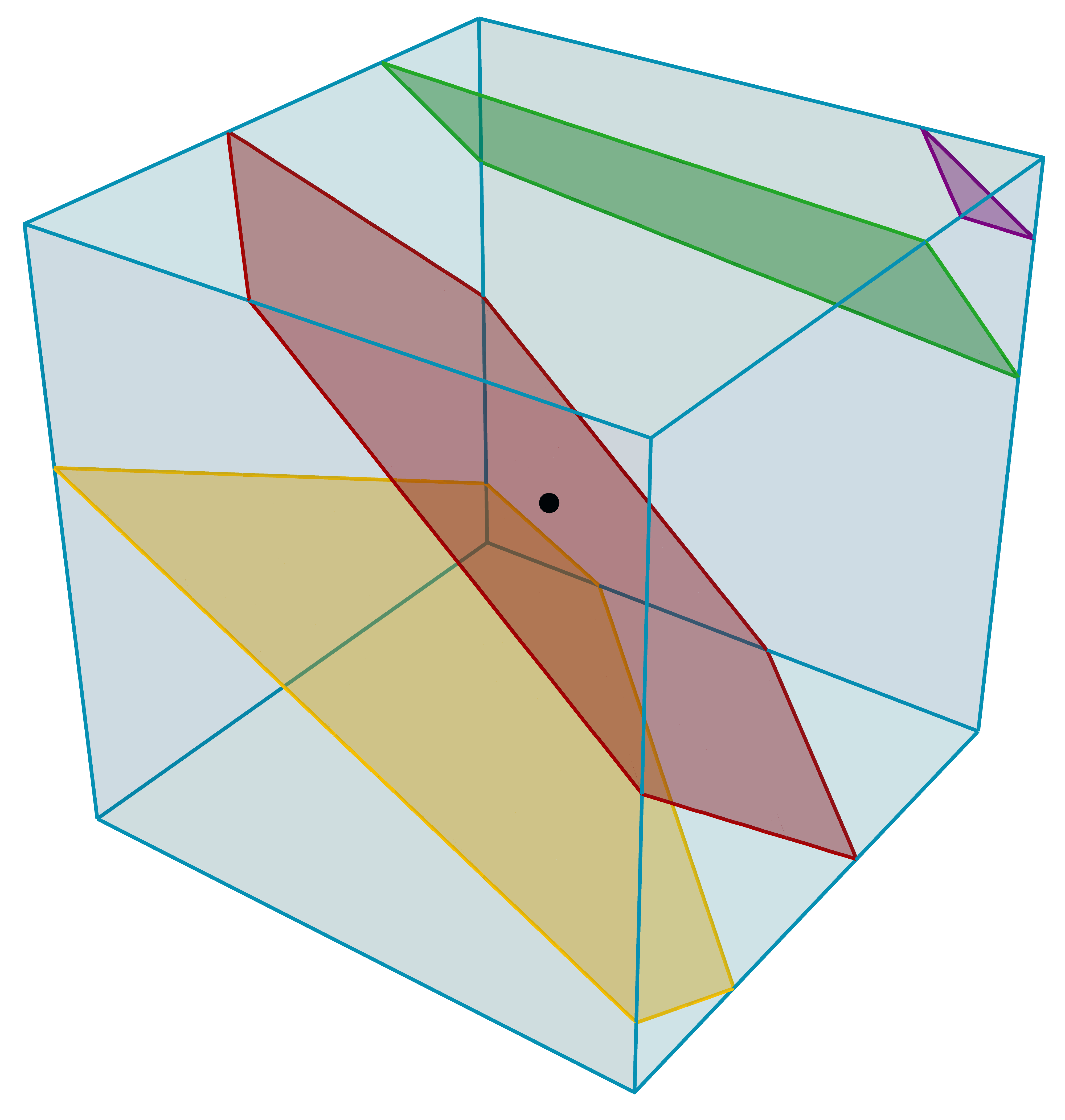}
    \caption{The cube $C_3$ and all its combinatorial types of slices: a purple triangle, a green quadrilateral, a yellow pentagon, a red hexagon.}
    \label{fig:intro}
\end{figure}

When restricting to slices through the origin, our computation extends to dimension $7$. Indeed, as shown below, the complexity of computing affine slices of $C_d$ matches that of computing central slices of $C_{d+1}$.
Our results for central slices are summarized in the following theorem.

\begin{theorem}\label{thm:main_central}
    The number of combinatorial types of $(d-1)$-dimensional hyperplane sections through the origin of the cube $C_d$ in dimensions $d = 3, 4, 5, 6, 7$ is $2, 6, 23, 133, 1657$ respectively.
\end{theorem}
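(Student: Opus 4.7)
The plan is to recast the enumeration of combinatorial types of central slices of $C_d$ as an enumeration of strata of a distinguished hyperplane arrangement on the space of normal vectors, then to quotient out redundancies using the symmetry group of the cube together with combinatorial-isomorphism tests on the resulting face lattices.

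The starting observation is that the combinatorial type of a central slice $C_d \cap H_a$, with $H_a = \{x \in \R^d : \langle a, x\rangle = 0\}$ and $a \ne 0$, is governed by the sign vector $\sigma(a) = (\sgn\langle a, v\rangle)_{v \in \vertices(C_d)} \in \{-,0,+\}^{2^d}$. Each vertex of the slice arises either as a vertex $v$ of $C_d$ with $\langle a, v\rangle = 0$ or as the intersection point on an edge $[v, v']$ whose endpoints have opposite signs; feeding this into the face lattice of $C_d$ recovers the face lattice of the slice. Thus the combinatorial type is constant on each face of the central arrangement $\mathcal{V}_d := \{H_v : v \in \vertices(C_d)\}$ in $\R^d$, where $H_v = \{a : \langle a, v\rangle = 0\}$; identifying $v$ with $-v$, this arrangement consists of $2^{d-1}$ hyperplanes.

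First, I would enumerate the faces of $\mathcal{V}_d$, i.e., all chambers together with their lower-dimensional strata, by a reverse-search or double-description procedure. The hyperoctahedral group $B_d$ of order $2^d\cdot d!$ acts on $\mathcal{V}_d$ by permuting hyperplanes, so it suffices to enumerate orbit representatives; for each such representative $a$ I would certify, either symbolically or by certified numerics, that $\sigma(a)$ is the predicted sign pattern, which is the natural entry point for the algebraic and numerical tools advertised in the introduction. For every orbit representative I then compute the $\mathcal{H}$- and $\mathcal{V}$-descriptions of the slice $C_d \cap H_a$ and extract its full face lattice. The final bookkeeping step is to partition the computed lattices into combinatorial-equivalence classes via a canonical-form or graph-isomorphism routine on vertex--facet incidences, refined by full face-lattice isomorphism when necessary. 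Summing orbit contributions and collapsing equivalent types yields the claimed numbers $2, 6, 23, 133, 1657$ for $d = 3, 4, 5, 6, 7$. Together with the reduction (used elsewhere in the paper) between affine slices in dimension $d$ and central slices in dimension $d+1$, the same machinery also underlies \Cref{thm:main_affine}.

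The principal obstacle is sheer scale. In dimension seven, $\mathcal{V}_7$ has $64$ hyperplanes in $\R^7$, so even after dividing by $|B_7| = 645120$ the number of orbit representatives of faces is large, and each must be subjected to certified sign determination, slice computation, and isomorphism testing. A second subtlety is the rigorous treatment of the non-generic strata of $\mathcal{V}_d$: hyperplanes passing through vertices of $C_d$ can either produce genuinely new combinatorial types or collapse onto types already seen in the chambers, and deciding which requires the same certified apparatus as for the chambers themselves. Handling both issues in a verifiable way is what makes dimension $7$ the natural stopping point, and it is the part of the argument where the combinatorial, algebraic, and numerical components of the method must be most carefully interlocked.
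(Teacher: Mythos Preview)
Your proposal is correct and follows essentially the same strategy as the paper: stratify the space of normal vectors by the central arrangement $\mathcal{H}_{C_d}=\{v^\perp : v\in\vertices(C_d)\}$, sample one representative per cell (including lower-dimensional strata), reduce modulo the $B_d$-action, compute each slice, and compare face lattices via vertex--facet incidence isomorphism. The only refinement the paper adds beyond your outline is the specific certified numerical method for sampling cell representatives---critical points of a log-likelihood function on the arrangement complement, following \cite{Huh2013,Reinke2024}---which replaces the double-description/reverse-search step you mention and is what actually makes $d=7$ feasible.
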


\textbf{Combinatorial types.}
The number of combinatorial types of $(d-1)$-slices of the $d$-dimensional cube $C_d$ is well known to be $4$ for $d = 3$, see \Cref{fig:intro}. For $d = 4$, Evers appears to have shown that this number is at least 30~\cite{evers2010hyperebenenschnitte,Frank2012}, although the result was never published. In this article, we present exact counts for $d = 4, 5, 6$, and confirm that Evers' upper bound is indeed tight. 
Independently, Nakamura and Sasaki provided a count for $d = 4$ (see~\cite{Nakamura1980}; in Japanese, not available online and not peer-reviewed), which was extended to $d = 5$ in the work of Fukuda et al.~\cite{Fukuda1997}. However, their numbers differ from those presented here -- we reproduce their computations and explain the discrepancy, which stems from different definitions of combinatorial types.

\textbf{Number of vertices.}
The maximum number of vertices of affine slices of $C_d$ was determined by O'Neil, and more recently De Loera et al.\ identified the sequences of vertex numbers occurring among all affine slices for $d = 6$ \cite{ONeil1971,deloera2025numberverticeshyperplanesection}. In this article, we extend these results by providing the number of combinatorial types corresponding to each possible vertex count, and we prove that O'Neil's bound can be attained not only by affine slices but even by slices through the origin.

\textbf{Computational methods.}
In order to perform these computations we combine combinatorial, algebraic and numerical techniques, supported by several software packages.
\cite{Brandenburg2025} provides algorithms to compute combinatorics and volumes of slices for general polytopes, with implementations in \texttt{SageMath} \cite{sagemath}, but these are infeasible for the computationally demanding $6$-dimensional cube. To overcome these limitations, we draw on the theory of the maximum-likelihood degrees of very affine varieties \cite{Huh2013}, together with recent generalizations to hypersurface arrangements \cite{Reinke2024}. Key \texttt{Julia} \cite{Julia-2017} packages include the recent \texttt{HypersurfaceArrangements.jl} \cite{Breiding2024}, which rests on Morse theory and on \texttt{HomotopyContinuation.jl} \cite{Breiding2018}, as well as \texttt{Oscar.jl} \cite{OSCAR,OSCAR-book} for combinatorial tasks such as intersecting and comparing polytopes. Although our approach relies on numerical methods, all results are numerically certified, ensuring that the floating-point computations correspond to exact solutions.

\paragraph{Outline.}
Given the extensive literature on slices of cubes, we conclude the introduction with a brief overview of known results.
In \Cref{sec:description_slices} we present computational and theoretical results on the combinatorics of slices, both affine and central. The main statements are \Cref{thm:upper_bound_vertices,thm:generic_central_slices}. \Cref{sec:algo} describes the computational framework and provides additional results and conjectures (see, e.g., \Cref{conj:central_slices_only_cube}) suggested by the data, and also contains the proofs of \Cref{thm:main_affine,thm:main_central}. In \Cref{sec:graphs} we compare our computations with those of Fukuda et al.~\cite{Fukuda1997}, leading to further open questions. Finally, we provide two appendices: \Cref{app:distribution} displays plots of the distributions of the combinatorial types of slices, and \Cref{app:repo} describes the online repository \url{https://doi.org/10.5281/zenodo.17304584} where both our code and collected data are available.

\subsection*{Related work}
\textbf{Threshold functions.}
A key step in computing the combinatorics of cube slices is the computation of the \emph{threshold arrangement}. A \emph{linear threshold function} (or \emph{linear threshold device}) is a linear function whose zero set is a hyperplane that partitions the vertices of the cube, and a self-dual linear threshold function corresponds to a hyperplane through the origin. The parameter space of linear threshold functions is subdivided into regions induced by the \emph{threshold arrangement}, a hyperplane arrangement on which the vertex partition remains constant within each cell. This arrangement is closely tied to the task of classifying slices of the cube and its computation is therefore a crucial step of the algorithms developed in this article. Threshold functions play a fundamental role in the early theory of neural networks \cite{cover64_geometricalstatisticalproperties,Abelson1977,Wenzel2000,Ojha2000,} and remain relevant for modern neural network architectures \cite{Chen22:ThresholdNN,Ergen2023:GloballyOptNeural,Khalife2023}. Interestingly, they are also connected to topics in convex geometry and algebraic combinatorics, such as zonotopes~\cite{Montufar2015} and triangulations of the root polytope of type~A~\cite{Gutekunst2021}. Considerable computational effort has been devoted to determining the exact number of linear threshold functions (i.e., the number of cells of the threshold arrangement) for $d \leq 10$ \cite{Muroga1962,Winder1965,Muroga1970,Brysiewicz2023}, and it is known that the logarithm of this number grows asymptotically like $d^2$ \cite{ZUEV1992}.

\textbf{Volumes.}
Extensive tools have been developed to study the volumes of slices of cubes.
A classical problem concerns \emph{extremal slices}, i.e., those with maximal or minimal volume among all slices.
Hadwiger showed in the 1970s that the minimum volume of a slice through the center of symmetry is attained by a hyperplane parallel to a facet (\cite{Hadwiger1972}; in German), a result later reproved by Hensley \cite{Hensley1979}, who additionally conjectured the maximum volume of a slice. 
A few years later, Ball proved in his seminal work that the maximum-volume slice is orthogonal to a main diagonal of a $2$-face \cite{ball86_cubeslicing$rn$}. These results continue to influence modern research, inspiring variations such as the computation of extremal slices for normalized volumes
\cite{Aliev2020,Barany2022}, the description of the space of all extremal slices \cite{Ambrus2021}, and the study of extremal lower-dimensional slices of cubes \cite{Vaaler1979,Ball87,Ivanov2021}. Beyond extremal cases, one may also consider the volume as a function over all hyperplane sections of a cube. This has lead to formulas for the volume of general $(d-1)$-dimensional slices \cite{Marichal2008,Frank2012} or for specific subclasses \cite{Chakerian1991,Bartha2020}, and it has been shown in \cite{Berlow2022,Brandenburg2025} that the integral of any polynomial (such as the volume) over the slices is piecewise polynomial on the cells of a hyperplane arrangement. An active line of current research studies extremal slices of cubes cut by hyperplanes at distance $t$ from the origin \cite{Moody2013,Pournin2023a,Pournin2023,Ambrus2024,pournin2025deepsectionshypercube}, motivated by a conjecture of Milman -- reported by K\"onig and Koldobsky \cite{Koenig2011} -- on the location of such slices for specific ranges of $t$.
Further variations include slices with maximal perimeter \cite{Koenig2021}, slices maximizing different measures \cite{Zvavitch2008,Koenig2019}, and the volumes of slabs of cubes \cite{Marichal2008,Koenig2011}. 

\textbf{Related volume questions on polytopes.}
One way to study the volumes of all central slices of a cube, or more generally of any polytope, is through its \emph{intersection body}, an object closely connected to the classical \emph{Busemann–Petty problem}~\cite{BusemannPetty}.
This problem asks whether, whenever the $(d-1)$-dimensional volumes of the slices of one centrally symmetric convex body are bounded above by those of another, the same inequality holds true for their $d$-dimensional volumes. A simple high-dimensional counterexample compares slices of cubes with slices of balls. Indeed, the Busemann-Petty problem holds only in dimensions up to $4$ \cite{GKS99:BusemannPetty}, and the quest to establish this fact led to the development of the theory of intersection bodies \cite{Lutwak88:IntBodies,Gardner94:IntBodies,Koldobsky98:IntBodies}. Since then, the study of intersection bodies of cubes has attracted independent interest; see, e.g., \cite{aliev08_siegelslemmasum,Aliev2020,Berlow2022}. 
The negative resolution of the Busemann-Petty problem also inspired Bourgain's slicing conjecture -- now a theorem \cite{KlartagLehec25:AffirmativeBourgain} -- which asserts the existence of a universal constant in the inequality for $d$-dimensional volumes. Analogous questions can be formulated for lattice point counts \cite{FreyerHenk24:DiscreteSlicing}. 

\textbf{Variations on cubes.}
The cube can also be seen as the unit ball of the $\ell_\infty$-norm, and most of the questions discussed above have natural analogues for unit balls of $\ell_p$-norms, more in general \cite{MeyerPajor88:lpball,Oleszkiewicz03:lp,Koldobsky05:ConvexGeom,konig25:maxSectionslp}. 

On another note, instead of changing the norm, one can change the notion of volume by studying the \emph{discrete volume}, that is, the number of lattice points in a slice of a polytope. 
This quantity is not well understood for general polytopes, but for slices of cubes both a closed formula for the number of lattice points~\cite{Abel2018} and a combinatorial description of their Ehrhart polynomials~\cite{Ferroni2024} are known.

Another point of view comes from probability and concerns the number of faces of a slice. For central slices, there is a closed formula for the expected number of vertices of a random $k$-dimensional slice of $C_d$, together with a lower bound on the expected number of faces of any dimension \cite{Lonke2000}.
For affine slices, the expected number of vertices of a random $k$-dimensional slice -- with a suitable distribution -- is $2^k$, independent of the dimension of the cube \cite{Swan2016}.

\subsection*{Acknowledgments}
We would like to thank Jes\'us De Loera, Simon Telen, and Kexin Wang for many fruitful conversations. We are especially grateful to Saiei-Jaeyeong Matsubara-Heo for locating the paper \cite{Nakamura1980} and generously translating it for us.
We also thank Benjamin Schr\"oter for pointing us to \cite[Example 23]{PinedaVillavicencio2022}. CM is supported by Dr. Max R\"ossler, the Walter Haefner Foundation, and the ETH Z\"urich Foundation. MB is supported by the SPP 2458 ``Combinatorial Synergies'', funded by the Deutsche Forschungsgemeinschaft (DFG, German Research Foundation).

\section{Description of the slices}\label{sec:description_slices}

In this section, we present computational and theoretical results on slices of the centrally symmetric cube $C_d = [-1,1]^d$. Here, a \emph{slice} is any $(d-1)$-dimensional polytope which can be obtained as the intersection of $C_d$ with a hyperplane. Lower-dimensional polytopes obtained in this way are disregarded. A hyperplane is said to be \emph{generic} with respect to $C_d$ if it does not intersect any vertex of $C_d$. A slice of $C_d$ is a \emph{generic slice} if it arises as the intersection of $C_d$ with a generic hyperplane. A slice of $C_d$ is a \emph{central slice} if the defining hyperplane contains the origin. When we wish to emphasize that the hyperplane need not contain the origin, we refer to the slice as \emph{affine}.

\begin{table}[ht]
\centering
\begin{tabular}{l|ccccccc}
                       & $d=$ \hspace{-1.2em} & 2 & 3 & 4  & 5   & 6 & 7 \\ 
                       \hline
affine slices          & & 1     & 4 & 30 & 344 & 7346 & - \\
generic affine slices  & & 1     & 4 & 12 & 58 & 554 & - \\
central slices         & & 1     & 2 & 6  & 23 & 133 & 1657 \\
generic central slices & & 1     & 2 & 3  &  7  & 21 & 135
\end{tabular}
\caption{Number of combinatorial types of $(d-1)$-dimensional slices of the cube $C_d=[-1,1]^d$. For the $C_7$, the affine slices computation is out of reach.}
\label{tab:comb-types}
\end{table}

The \emph{combinatorial type} of a polytope is the isomorphism class of its face lattice. Although any full-dimensional polytope admits infinitely many slices from a metric point of view, only finitely many distinct combinatorial types occur. Counting slices according to their combinatorial type is therefore a natural and finite classification strategy.
\Cref{tab:comb-types} displays the numbers of combinatorial types of slices of the cube $C_d$ for dimensions $d\leq 7$. All results for $d = 4, 5, 6, 7$ are new. We emphasize that our computations yield a complete list of all combinatorial types, whereas \Cref{tab:comb-types} merely records their counts. A complete list of representatives of each combinatorial type of slice is available at
\begin{center}
    \url{https://doi.org/10.5281/zenodo.17304584}
\end{center}

\subsection{Generic central slices}\label{sec:generic-central-slices}

When restricting to central slices, and in particular generic central slices, the rigidity of the cube can be exploited to characterize combinatorial types. 
Indeed, two generic central slices of the cube are combinatorially equivalent if and only if the corresponding hyperplanes induce the same partition of the sets of vertices of the cube, up to its symmetries. We now formalize this observation. 
A central hyperplane $H$ is determined by its normal vector $u$ (unique up to rescaling), and we write $H = u^\perp$. Every vector $u\in \R^d$ induces a partition of the vertices of $C_d$, grouping them into those with nonnegative scalar product with $u$ and those with negative scalar product. 

A \emph{signed permutation} $\pi$ of the set $[d]$ is a permutation of the set $\{-d,-d+1,\ldots,-1,1,\ldots,d\}$ satisfying $\pi(-i)=-\pi(i)$. With a slight abuse of notation, we let $\pi$ act on vectors in $\R^d$ by
\[
\pi(v) = \pi(v_1,\ldots,v_d) = (v_{\pi(1)}, \ldots, v_{\pi(d)}),
\]
where we define $v_{-i} = -v_i$. We denote by $B_d$ the group of signed permutations of $[d]$.

\begin{lemma}\label{lem:facets_generic_central_slices}
    For any $u\in \R^d$, the hyperplane $u^\perp$ either intersects all the facets of $C_d$ (not necessarily in their relative interiors) or intersects all but two parallel facets. In the latter case, $C_d \cap u^\perp$ is combinatorially equivalent to $C_{d-1}$.
\end{lemma}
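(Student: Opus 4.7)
The plan is to work facet-by-facet using the explicit description of $C_d = [-1,1]^d$, whose $2d$ facets are $F_i^\pm = \{x \in C_d : x_i = \pm 1\}$, grouped in $d$ parallel pairs. For a fixed normal $u = (u_1, \ldots, u_d)$, I would first derive a criterion for $u^\perp \cap F_i^+ \neq \emptyset$: a point $x \in F_i^+$ satisfies $\langle u, x\rangle = 0$ iff $0$ lies in the interval of values attained by $u_i + \sum_{j\neq i} u_j x_j$ as the free coordinates range over $[-1,1]$. This boils down to the clean condition $|u_i| \leq \sum_{j\neq i}|u_j|$. Since this criterion is symmetric under $u_i \mapsto -u_i$, the identical condition governs $F_i^-$, so the two facets in a parallel pair are always intersected together or missed together.

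Next I would show that at most one parallel pair can be missed. Missing $F_i^\pm$ is equivalent to the strict inequality $|u_i| > \sum_{j\neq i}|u_j|$. If this held simultaneously for two distinct indices $i \neq k$, then I would obtain the contradictory chain
\[
|u_i| \;>\; \sum_{j\neq i}|u_j| \;\geq\; |u_k| \;>\; \sum_{j\neq k}|u_j| \;\geq\; |u_i|.
\]
This yields the dichotomy asserted in the lemma.

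For the second conclusion, I would reduce by a signed permutation in $B_d$ to the case where the missed pair is $F_d^\pm$ and $u_d > \sum_{j<d}|u_j|$. The slice is then $\{x\in C_d : x_d = -u_d^{-1}\sum_{j<d} u_j x_j\}$, and the strict inequality guarantees that for every $(x_1,\ldots,x_{d-1})\in[-1,1]^{d-1}$ the resulting $x_d$ lies in $(-1,1)$. Hence the projection onto the first $d-1$ coordinates is an affine isomorphism from the slice onto $C_{d-1}$, and combinatorial equivalence follows.

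The argument is essentially a short computation once the facet criterion is in hand, and I do not expect any real obstacle. The only subtle point worth emphasizing is that the criterion depends on $u_i$ only through $|u_i|$; this is what forces the missed facets to form a \emph{parallel} pair rather than, for instance, two facets sharing a lower-dimensional face.
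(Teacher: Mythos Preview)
Your proof is correct, but it follows a different path from the paper's argument. The paper reasons geometrically: if $u^\perp$ misses the facet $\{x_d = 1\}$, then by central symmetry of both $u^\perp$ and $C_d$ it also misses $\{x_d = -1\}$; every remaining edge of the cube runs from one of these facets to the other and hence must cross $u^\perp$, so every remaining facet is hit. You instead extract the explicit criterion $|u_i| \le \sum_{j\ne i}|u_j|$ and derive the dichotomy from the impossibility of two simultaneous strict reverse inequalities. Your approach is slightly longer but has two advantages: it makes transparent \emph{why} the missed facets must form a parallel pair (the criterion is even in $u_i$), and it delivers the combinatorial equivalence with $C_{d-1}$ via an explicit affine isomorphism given by the coordinate projection, whereas the paper leaves this last step essentially implicit in the edge argument. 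The paper's route, on the other hand, generalizes more readily to other centrally symmetric polytopes, since it never uses coordinates.
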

\begin{proof}
    Assume that $u^\perp$ does not intersect a facet of $C_d$, which without loss of generality we assume to be contained in the affine hyperplane $H_{+d} = \{x \in \R^d \mid x_{d}=1\}$. By symmetry, $u^\perp$ then does not intersect the parallel facet contained in $H_{-d} = \{x \in \R^d \mid x_{d}=-1\}$. However, every edge of the cube that is not contained in one of these two facets has exactly one vertex in $H_{+d}$ and one in $H_{-d}$, so each such edge meets $u^\perp$. This proves the claim.
\end{proof}

This simple observation, relying on the rigid structure of the regular cube, allows us to reconstruct $C_d$ (up to symmetry) from the combinatorics of a given slice.
\begin{theorem}\label{thm:generic_central_slices}
    Let $u_1^\perp, u_2^\perp\subset \R^d$ be hyperplanes generic with respect to $C_d$. Then, $C_d \cap u_1^\perp$ is combinatorially equivalent to $C_d \cap u_2^\perp$ if and only if there exists a signed permutation $\pi \in B_d$ such that $u_1^\perp$ and $\pi(u_2)^\perp$ intersect the same edges of $C_d$.
\end{theorem}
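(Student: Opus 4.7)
I would treat the two directions separately.

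The $(\Leftarrow)$ direction is direct. Any $\pi \in B_d$ acts as a linear isometry preserving $C_d$, so $C_d \cap \pi(u_2)^\perp = \pi(C_d \cap u_2^\perp)$ is combinatorially equivalent to $S_2 := C_d \cap u_2^\perp$. It therefore suffices to show that if two generic central hyperplanes cross the same edges of $C_d$, then the resulting slices have equivalent face lattices. This follows because every face of $C_d$ is itself a lower-dimensional cube with connected $1$-skeleton, so $u^\perp$ crosses such a face if and only if it crosses at least one of its edges. Hence the set of crossed edges determines the sub-poset of crossed faces of $C_d$, which, with the empty face adjoined, is exactly the face lattice of the slice.

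For the $(\Rightarrow)$ direction, given an isomorphism $\phi\colon \mathcal{F}(S_1) \to \mathcal{F}(S_2)$, my strategy is to reconstruct from each face lattice alone the partition of the vertices of $S_i$ by the \emph{parallel class} of their corresponding edges of $C_d$, and then use $\phi$ to assemble a signed permutation. The key observation is that a vertex $v$ of $S = C_d \cap u^\perp$ corresponds to an edge of $C_d$ in direction $j \in [d]$ if and only if $v$ lies in neither of the two facets of $S$ arising from the pair $\{x_j = \pm 1\}$ of $C_d$; indeed, any edge of $C_d$ not in direction $j$ lies in exactly one of these two facets. Consequently, once the pairing of facets of $S$ coming from pairs of parallel facets of $C_d$ is identified combinatorially, the parallel-class partition of the vertices of $S$ follows at once.

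Such pairs of facets of $S$ are paired by the intrinsic central symmetry of $S$ (induced by $x \mapsto -x$), since parallel facets of $C_d$ yield antipodal, hence parallel, facets of $S$. With the pairings identified on both $S_1$ and $S_2$, the isomorphism $\phi$ induces a permutation $\tau$ of the $d$ parallel classes together with, for each class, a sign $\epsilon_j \in \{\pm 1\}$ recording how $\phi$ matches the two facets inside each pair. The data $(\tau, \epsilon)$ defines the required $\pi \in B_d$, and a direct check on vertices (reading off each vertex's direction and the signs of its remaining coordinates) shows that $\pi(E_{u_2}) = E_{u_1}$, where $E_u$ denotes the set of edges of $C_d$ crossed by $u^\perp$.

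The main obstacle is to make rigorous the combinatorial identification of the parallel-facet pairing of $S$: this is not automatic for general centrally symmetric polytopes, since an arbitrary combinatorial automorphism need not respect the antipodal pairing. The proof must leverage the extra rigidity of cube slices. A natural route is to distinguish the two cases of \Cref{lem:facets_generic_central_slices}: when $S$ is combinatorially a $(d-1)$-cube, the unique crossed parallel class can be sent to any other by the transitive action of $B_d$ on directions, and the theorem follows immediately; when $S$ has $2d$ facets, the pairing should be recoverable as the unique one compatible with the central symmetry of $S$, for instance via an induction on $d$ using that facets of $S$ are affine slices of $(d-1)$-cubes.
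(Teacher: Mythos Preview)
Your approach is essentially the paper's: both directions are handled the same way, including the split into the $2(d-1)$-facet and $2d$-facet cases via \Cref{lem:facets_generic_central_slices}. In the $2d$-facet case the paper also works with the facet labeling by $[d]\cup -[d]$ coming from the cube, encodes the combinatorial equivalence as row and column permutations of the vertex--facet incidence matrices, and then reads off the edge correspondence column by column, exactly as you propose via the ``parallel class'' of each vertex.

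The one substantive difference is how the ``main obstacle'' you flag is dispatched. You suggest an induction on $d$ through facets (which are affine slices of $C_{d-1}$); the paper does not do this. Instead, it simply asserts that because both slices are centrally symmetric, the column permutation $\pi$ must send each pair $(k,-k)$ to another pair $(\ell,-\ell)$, i.e., $\pi\in B_d$. In other words, the paper treats the antipodal facet pairing as a combinatorial invariant and moves on. Your instinct that this step is the crux is correct, and your proposed inductive route is more work than what the paper actually does; but the paper's one-line justification is terse, and if you want a fully rigorous version you will still need to argue that the antipodal involution on facets is determined by (or at least compatible with some choice of) the face-lattice isomorphism. Either way, no induction through lower-dimensional cubes is needed.
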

\begin{proof}
    Assume first that there exists a signed permutation $\pi \in B_d$ such that the partition of the vertices of $C_d$ induced by $u_1$ coincides with the partition induced by $\pi(u_2)$.
    Then, by \cite[Lemma 2.4]{Berlow2022}, the slices $C_d \cap u_1^\perp$ and $C_d \cap \pi(u_2)^\perp$ are combinatorially equivalent. Moreover, the symmetries of the cube imply $C_d = \pi(C_d)$, and therefore $C_d\cap \pi(u_2)^\perp$ is affinely isomorphic, and hence combinatorially equivalent, to $C_d \cap u_2^\perp$. This proves one direction of the statement (note that we did not use the fact that the slices are central).

    Conversely, assume that $C_d \cap u_1^\perp$ is combinatorially equivalent to $C_d \cap u_2^\perp$. By the genericity of these hyperplanes, \Cref{lem:facets_generic_central_slices} implies that $C_d \cap u_1^\perp$ has either $2(d-1)$ facets or $2d$ facets. 
    If $C_d \cap u_1^\perp$ has $2(d-1)$ facets, then for each $i=1,2$, the hyperplane $u_i^\perp$ partitions the vertices of $C_d$ into those lying in the facets
    \[
    \{x \in \R^d \mid x_{j_i}=1\}\quad\text{and}\quad \{x \in \R^d \mid x_{j_i}=-1\}.
    \]
    Let $\pi \in B_d$ be any signed permutation satisfying $\pi(j_2)=j_1$. It follows that $\pi(u_2)^\perp$ partitions the vertices of $C_d$ inside $\{x \in \R^d \mid x_{j_1}=1\}$ and $\{x \in \R^d \mid x_{j_1}=-1\}$ exactly as $u_1^\perp$ does, implying that $u_1^\perp$ and $\pi(u_2)^\perp$ intersect $C_d$ in the same set of edges in their relative interiors.
    
    It remains to consider the case where $C_d \cap u_1^\perp$ has $2d$ facets. We label the facets of $C_d$ as
    \[
    F_{\star j} = C_d \cap \{x \in \R^d \mid x_j = \star 1\}, \quad\star \in \{+,-\}.
    \]
    Let $M_1$ and $M_2$ be the vertex–facet incidence matrices of the two slices, each with $2d$ columns indexed by $[d] \cup -[d]$.
    Since the slices are combinatorially equivalent, there exist permutations $\pi$ of the columns and $\sigma:[n] \to [n]$ of the rows such that $(M_1)_{\sigma(a), \pi(b)} = (M_2)_{a,b}$.
    Because both slices are centrally symmetric, $\pi$ maps every pair of opposite integers $(k,-k)$ to another pair of opposite integers $(\pi(k),\pi(-k)) = (\ell,-\ell)$,
    so $\pi \in B_d$ indeed maps the columns of $M_2$ to those of $M_1$.

    Without loss of generality we may assume $\sigma = \operatorname{id}$. Suppose that $\pi(u_2)^\perp$ intersects an edge $e$ of $C_d$. Then $e \cap \pi(u_2^\perp)$ is a vertex of $C_d \cap \pi(u_2)^\perp$, corresponding to the $k^{\text{th}}$ row of $M_2$.
    From $(M_2)_{k,\star j} = (M_1)_{k,\pi(\star j)}$ we deduce
    \[
    e = \bigcap_{\substack{\star j \in [d] \cup -[d] \\ (M_2)_{k,\star j} = 1}} F_{\pi(\star j)} = \bigcap_{\substack{\star j \in [d] \cup -[d] \\ (M_1)_{k,\pi(\star j)} = 1}} F_{\pi(\star j)} \ .
    \]
    The rightmost expression is precisely the edge intersected by $u_1^\perp$ to produce the $k^{\text{th}}$ vertex of $C_d\cap u_1^\perp$. This holds for every edge of the slices, showing that $u_1^\perp$ and $\pi(u_2)^\perp$ intersect $C_d$ in exactly the same set of edges.
\end{proof}

\begin{example}\label{ex:hexagonal_slices}
    Let $u_1 = \frac{1}{\sqrt3}(1,1,1)$ and $u_2 = \frac{4}{\sqrt{29}}\bigl(1,-\frac{1}{2},-\frac{3}{4}\bigr)$. The corresponding slices are both hexagons, as shown in \Cref{fig:hexagons}, with vertex-facet incidence matrices equal (up to permutation) to
    \[
    M_1 = M_2 = 
    \begin{pmatrix}
        1 & 1 & 0 & 0 & 0 & 0 \\
        0 & 1 & 1 & 0 & 0 & 0 \\
        0 & 0 & 1 & 1 & 0 & 0 \\
        0 & 0 & 0 & 1 & 1 & 0 \\
        0 & 0 & 0 & 0 & 1 & 1 \\
        1 & 0 & 0 & 0 & 0 & 1
    \end{pmatrix}.
    \]
    The columns of $M_1$ are indexed by $F_{+1}, F_{-2}, F_{+3}, F_{-1}, F_{+2}, F_{-3}$, and the columns of $M_2$ are indexed by $F_{+1}, F_{+2}, F_{-3}, F_{-1}, F_{-2}, F_{+3}$. This yields the signed permutation $\pi \in B_3$ given by
    \[
    1 \mapsto 1, \quad 2 \mapsto -2, \quad 3 \mapsto -3.
    \]
    The vectors $u_1$ and $\pi(u_2) = \frac{4}{\sqrt{29}}\bigl(1,\frac{1}{2},\frac{3}{4}\bigr)$ induce the same partition of the vertices of $C_3$ into the two sets $\pm\{(1,1,1),(1,1,-1),(1,-1,1),(-1,1,1)\}$.
    Hence, the hyperplanes $u_1^\perp$ and $\pi(u_2)^\perp$ intersect exactly the same set of edges of $C_3$.
    \begin{figure}
        \centering
        \includegraphics[width=0.3\linewidth]{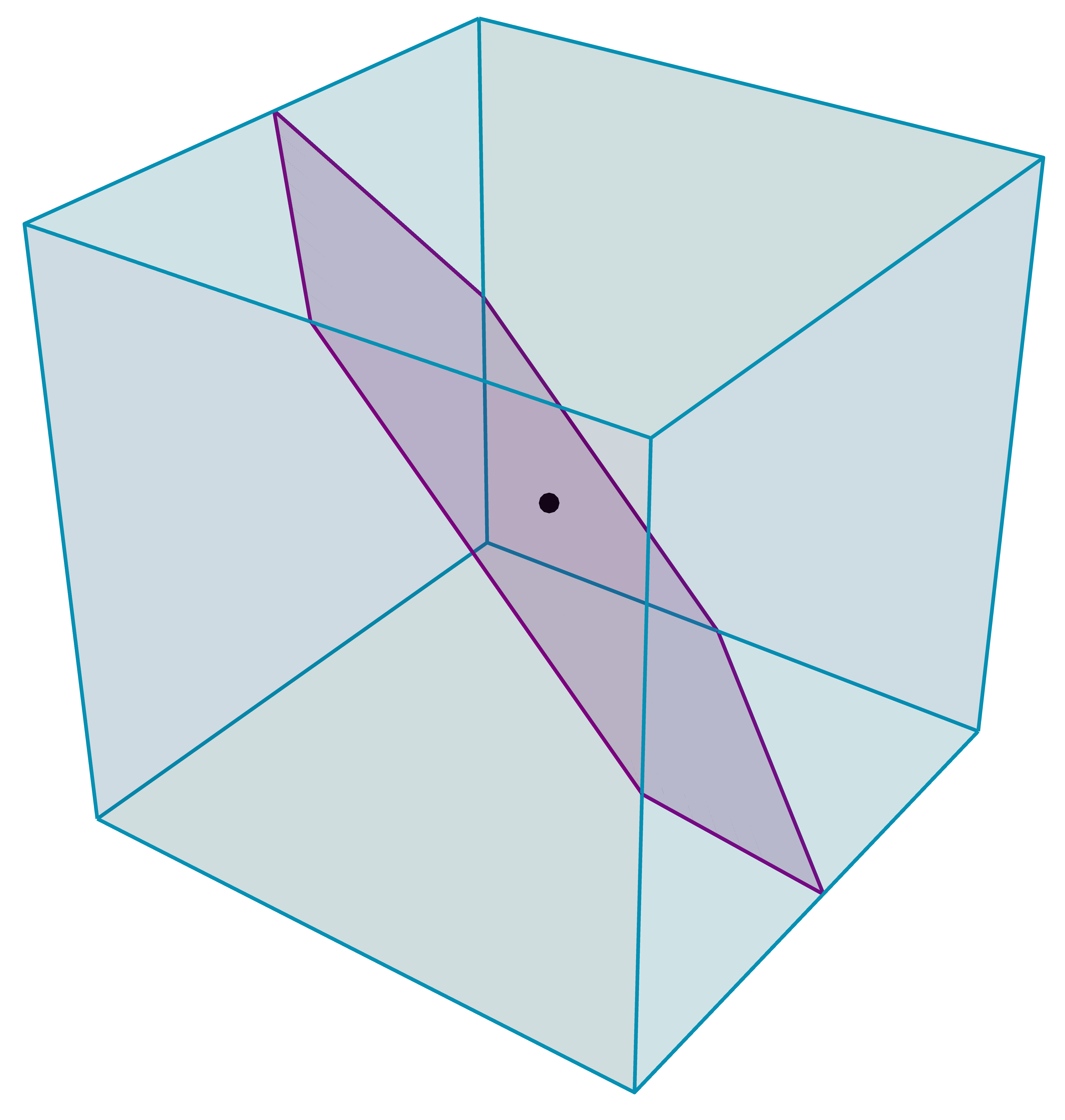}
        \qquad
        \includegraphics[width=0.3\linewidth]{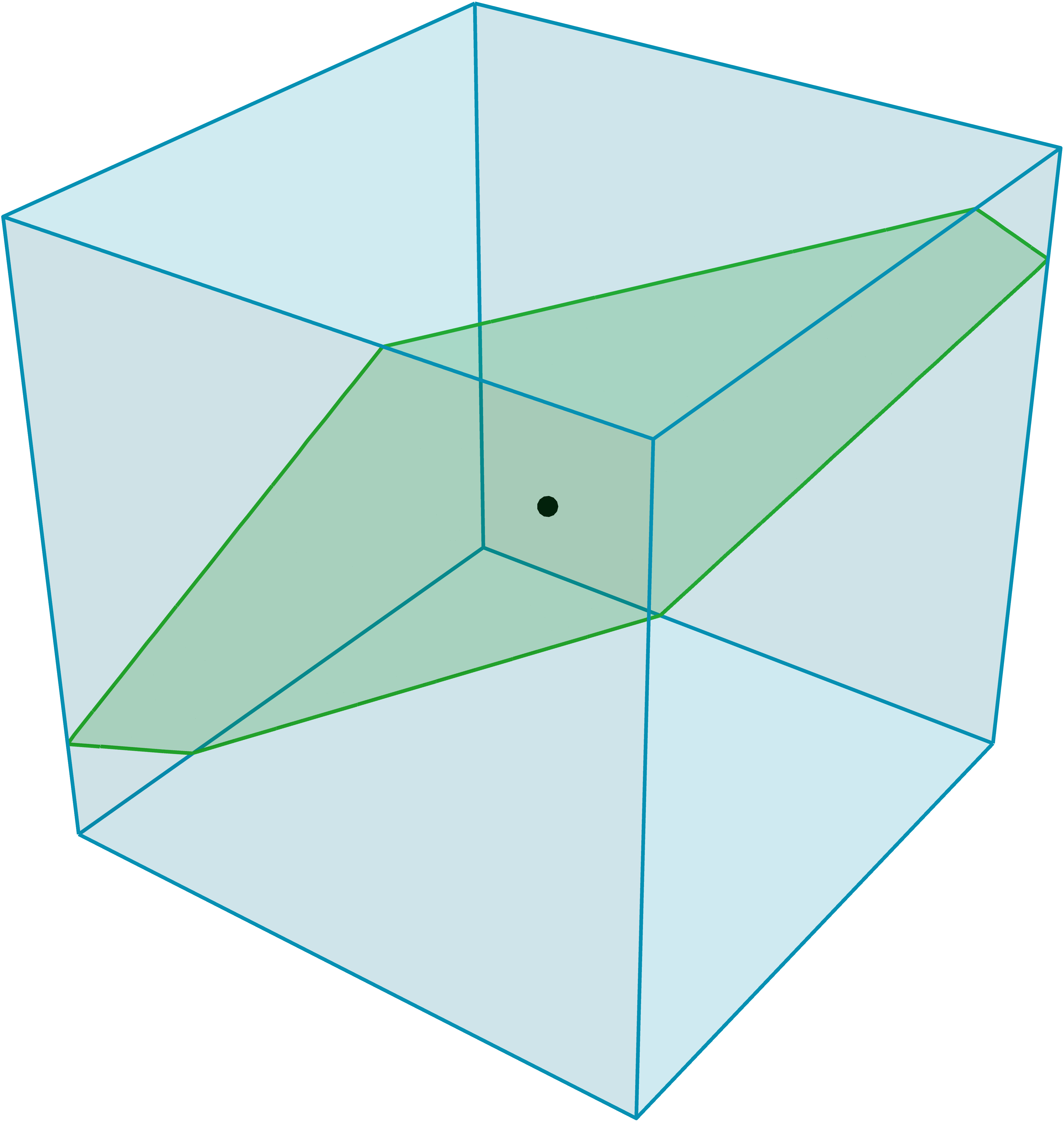}
        \caption{The slices $C_3\cap u_1^\perp$ (left) and $C_3\cap u_2^\perp$ (right) from \Cref{ex:hexagonal_slices}.}
        \label{fig:hexagons}
    \end{figure}
\end{example}

Despite one direction of \Cref{thm:generic_central_slices} not requiring the slices to be central (see first paragraph of its proof), the converse direction -- reconstructing the cube from a slice -- cannot be extended to noncentral slices, as the following example shows.

\begin{figure}[!h]
        \centering
        \includegraphics[width=0.3\linewidth]{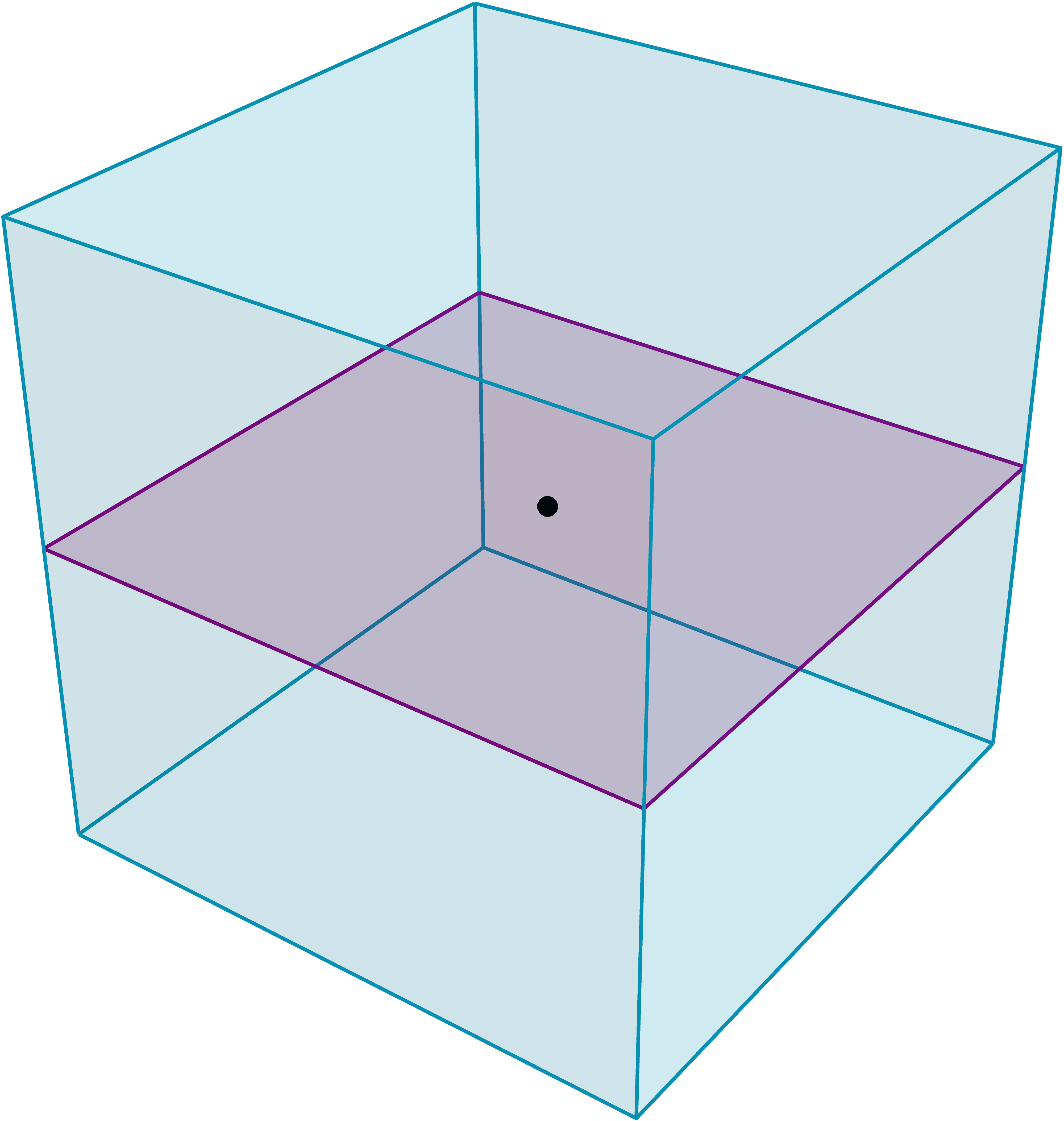}
        \qquad
        \includegraphics[width=0.3\linewidth]{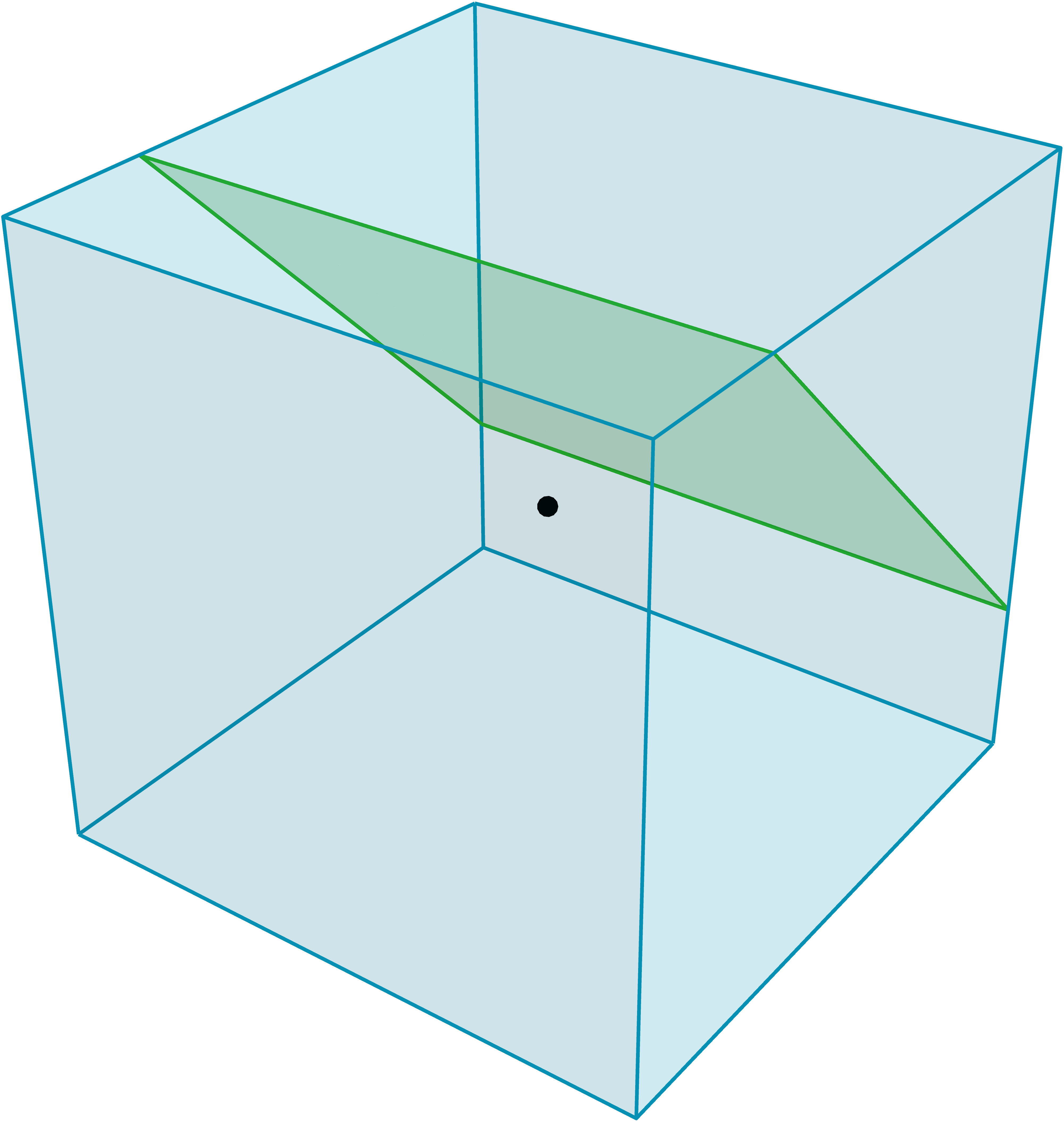}
        \caption{The slices $C_3\cap H_1$ (left) and $C_3\cap H_2$ (right) from \Cref{ex:quadrilateral_slices}.}
        \label{fig:quadrilaterals}
    \end{figure}
\begin{example}\label{ex:quadrilateral_slices}
    Let $H_1 = \{z=0\}$ and $H_2 = \{2y+2z=1\}$. The corresponding slices are both quadrilaterals, as displayed in \Cref{fig:quadrilaterals}, with vertex-facet incidence matrices equal (up to permutation) to
    \[
    M_1 = M_2 = 
    \begin{pmatrix}
        1 & 1 & 0 & 0 \\
        0 & 1 & 1 & 0 \\
        0 & 0 & 1 & 1 \\
        1 & 0 & 0 & 1
    \end{pmatrix}.
    \]
    The columns of $M_1$ are indexed by $F_{+1}, F_{+2}, F_{-1}, F_{-2}$, whereas those of $M_2$ are indexed by $F_{+1}, F_{+2}, F_{-1}, F_{+3}$. This defines a (partial) permutation of $\{\pm 1, \pm 2, \pm 3\}$ sending
    \[
    1\mapsto 1, \quad 2 \mapsto 2, \quad -1 \mapsto -1, \quad -2\mapsto 3.
    \]
    Such a mapping cannot be extended to a signed permutation, and indeed there is no signed permutation under which the two hyperplanes intersect the same set of edges of $C_3$.
\end{example}

This relation between the combinatorics of generic central slices of the cube and hyperplanes that partition its vertices leads naturally to the notion of threshold functions. A \emph{threshold function} on $d$ (or fewer) variables is a function
\[
f_{(w_0,w)}: \{-1,1\}^d \to \{0,1\}, \qquad
f_{(w_0,w)}(x) = 
\begin{cases}
    1 & \text{if } \langle w, x \rangle \geq w_0 \\
    0 & \text{otherwise} ,
\end{cases}
\]
where ${(w_0,w)} \in \R^{d+1}$. After a small perturbation of $(w_0,w)$ we may assume that $\langle w, x \rangle \neq w_0$ for all $x \in \{-1,1\}^d$. Geometrically, a threshold function $f_{(w_0,w)}$ records, for each vertex of the cube $C_d$, on which side of the hyperplane $\{x \in \R^d \mid \langle w,x \rangle = w_0\}$ the vertex lies.

The complement of a variable $x_i$ is $-x_i$. Threshold functions admit an $(S_2)^d$-action by independently complementing each variable, and an additional $(S_d)$-action by permuting the coordinates. An \emph{NP-equivalence class} of a threshold function is its equivalence class under these combined group actions. In other words, NP-equivalence classes correspond to orbits under the group $B_d$ of signed permutations. A threshold-function is \emph{self-dual} if it takes opposite values on complementary inputs, i.e., $f(-x) = 1-f(x)$ for all $x \in \{-1,1\}^d$. For this to hold, we must have
\[
    \langle w, -x \rangle \geq w_0 \iff 
    \langle w, x \rangle < w_0,
\]
which is equivalent to
\[
   w_0 + \langle w, x \rangle \leq 0 \text{ and }
   w_0 - \langle w, x \rangle >0 \quad \text{for all } x \in \{-1,1\}^d.
\]
This can occur only if $w_0 = 0$.
Thus every self-dual threshold function corresponds geometrically to a hyperplane through the origin.
Combining this with \Cref{thm:generic_central_slices} yields the following corollary.
\begin{corollary}
    Combinatorial types of generic central slices of the cube $C_d$ are in bijection with NP-equivalence classes of self-dual threshold functions on $d$ variables.
\end{corollary}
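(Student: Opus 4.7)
The plan is to assemble a short chain of bijections connecting the four objects: generic central hyperplanes in $\R^d$, vertex partitions of $C_d$ induced by such hyperplanes, self-dual threshold functions, and (after quotienting by the appropriate group actions) combinatorial types of slices and NP-equivalence classes.

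First, I would record the observation that underlies the whole correspondence: for a vector $u \in \R^d$ with $\langle u,x\rangle \neq 0$ for every vertex $x$ of $C_d$ (i.e.\ exactly the genericity condition on $u^\perp$), an edge of $C_d$ is intersected by $u^\perp$ in its relative interior if and only if its two endpoints lie on opposite sides of $u^\perp$. Hence the set of edges intersected by $u^\perp$ is determined by, and determines, the partition of $\{-1,1\}^d$ into $\{x : \langle u,x\rangle > 0\}$ and $\{x : \langle u,x\rangle < 0\}$. By the discussion preceding the statement, this partition is exactly the pair of level sets of the self-dual threshold function $f_{(0,u)}$, and every self-dual threshold function arises this way (since $w_0 = 0$ was forced and the small perturbation that removes vertices from the hyperplane corresponds to genericity of $u^\perp$).

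Next I would invoke \Cref{thm:generic_central_slices}: two generic central slices $C_d \cap u_1^\perp$ and $C_d \cap u_2^\perp$ are combinatorially equivalent if and only if there exists $\pi \in B_d$ such that $u_1^\perp$ and $\pi(u_2)^\perp$ intersect the same edges of $C_d$. Combined with the previous paragraph, this is equivalent to saying that $u_1$ and $\pi(u_2)$ induce the same vertex partition of $C_d$, or equivalently that $f_{(0,u_1)}$ and $f_{(0,\pi(u_2))}$ agree as threshold functions.

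Finally I would check that the $B_d$-action on normal vectors matches exactly the NP-equivalence action on threshold functions: the sign flips in $\pi \in B_d$ (i.e.\ the $(S_2)^d$-part) implement complementation of individual variables, while the underlying permutation in $S_d$ permutes coordinates, so $f_{(0,\pi(u))}(x) = f_{(0,u)}(\pi^{-1}(x))$ under the natural identification. Therefore NP-equivalence of $f_{(0,u_1)}$ and $f_{(0,u_2)}$ coincides with the existence of $\pi \in B_d$ making $u_1$ and $\pi(u_2)$ induce the same partition, which completes the bijection. The argument is essentially bookkeeping once \Cref{thm:generic_central_slices} is in hand; the only mild subtlety is verifying the matching of group actions, which is immediate from the definitions.
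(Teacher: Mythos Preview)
Your proposal is correct and follows essentially the same approach as the paper: the paper's proof is literally just the sentence ``Combining this with \Cref{thm:generic_central_slices} yields the following corollary,'' where ``this'' refers to the preceding paragraph identifying self-dual threshold functions with central hyperplanes and NP-equivalence with the $B_d$-action. You have simply unpacked that sentence carefully, including the verification that the $B_d$-action on normals matches the NP-equivalence action on threshold functions, which the paper leaves implicit.
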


\subsection{Vertices of slices}

In \cite{deloera2025numberverticeshyperplanesection}, the authors give a complete characterization of the possible numbers of vertices of hyperplane sections of $C_d$ (regardless of the dimension of the intersection) for $d\leq 7$. We confirm their numbers and additionally determine which of these are realized as vertices of full-dimensional hyperplane sections, namely as vertices of what we call \emph{slices}. A natural question is the distribution of these vertex numbers: how many combinatorial types of slices have a fixed number of vertices?
This is shown in \Cref{fig:4_cube_vertices,fig:5_6cube_vertices}, which display these distributions for various types of slices of $C_4$, $C_5$, and $C_6$. Here, the light green bars represent all affine slices, the dark green bars represent generic affine slices, and the light and dark blue bars correspond to central and generic central slices, respectively.

\begin{figure}[ht]
\hspace{-2em}
    \centering
    \includegraphics[width=0.7\textwidth]{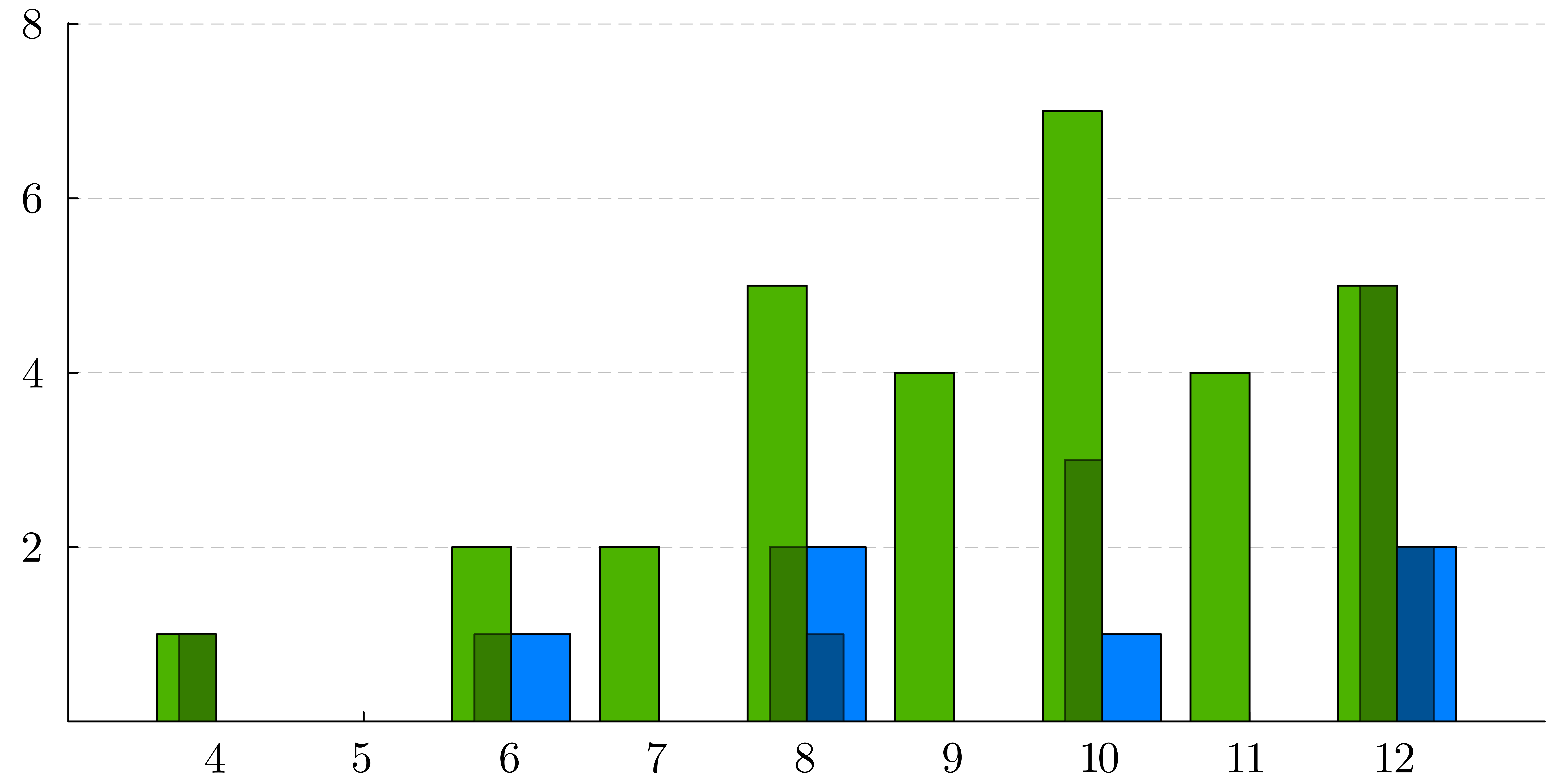}
    \caption{Number of combinatorial types of slices of $C_4$ by number of vertices.}
    \label{fig:4_cube_vertices}
\end{figure}

\begin{figure}[ht]
    \centering
    \includegraphics[width=0.48\linewidth]{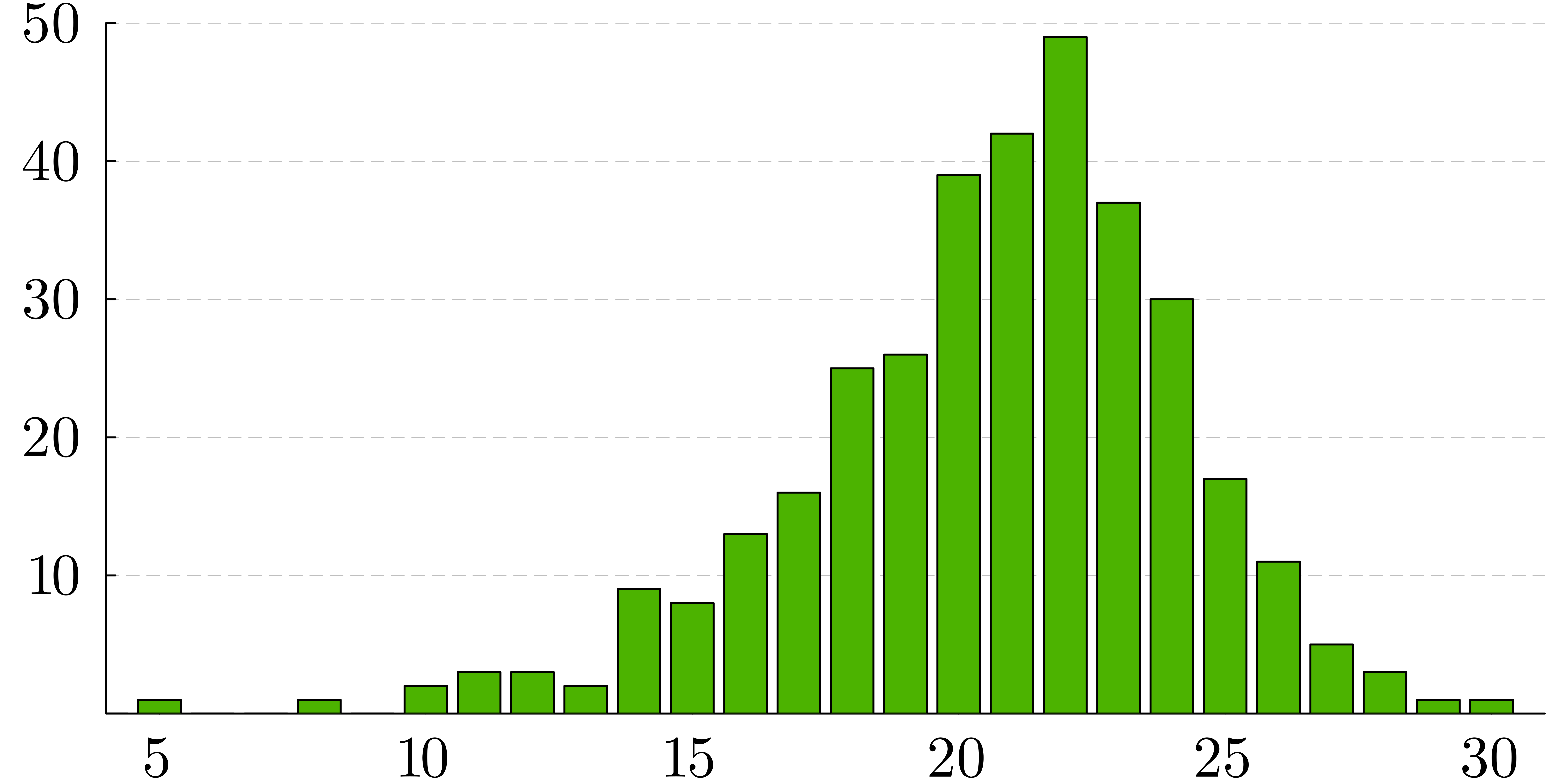}
    \:
    \includegraphics[width=0.48\linewidth]{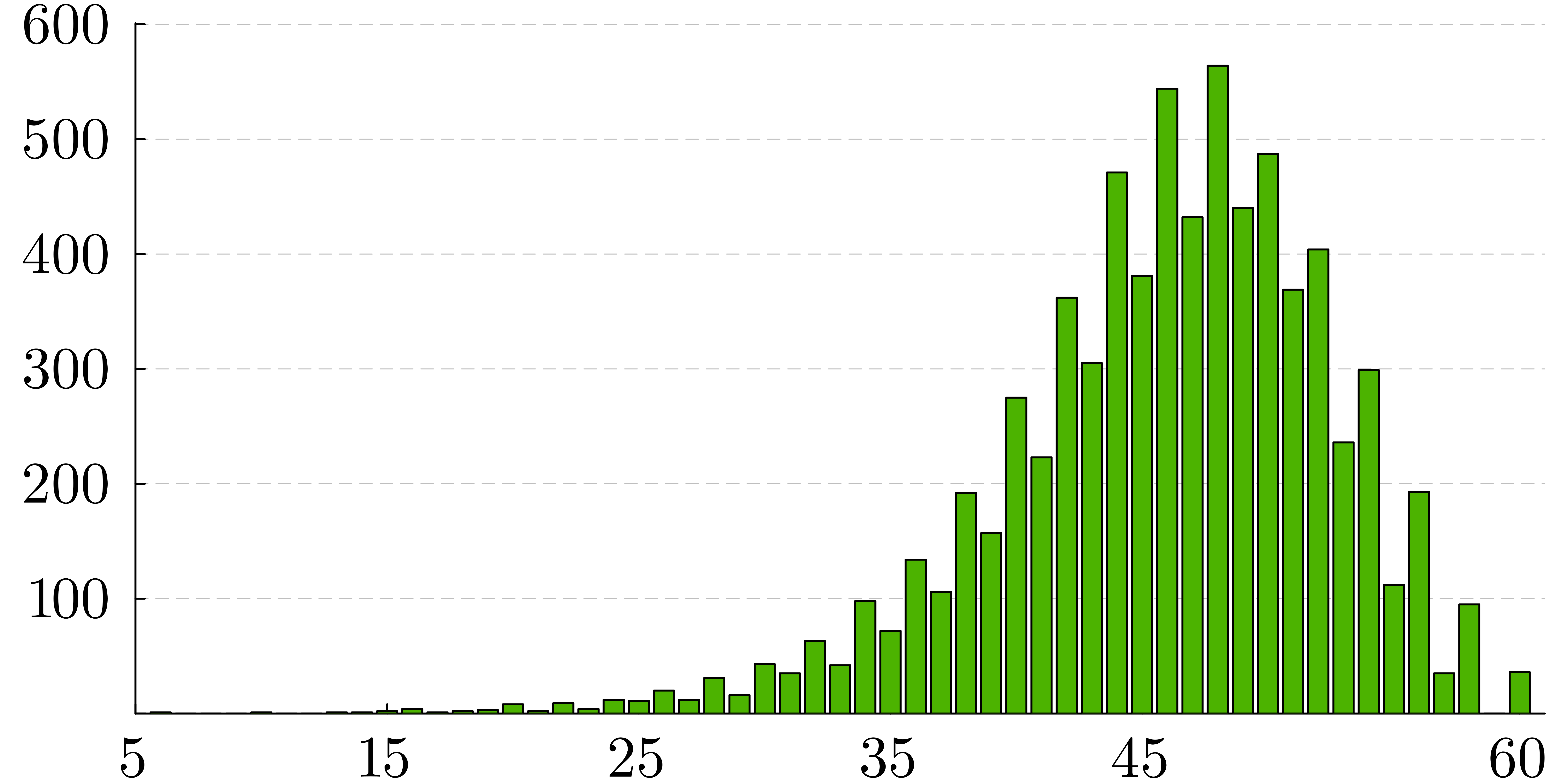}
    \caption{Number of combinatorial types of affine slices of $C_5$ and $C_6$ by number of vertices.}
    \label{fig:5_6cube_vertices}
\end{figure}

Moreover, \cite[Theorem 1.4]{deloera2025numberverticeshyperplanesection} shows that there are gaps in the possible numbers of vertices of slices of cubes. However, the authors also allow polytopes of dimension strictly lower than $d-1$, obtained as intersections of $C_d$ with a hyperplane. This relaxation yields a larger set of vertex numbers. Our computations indicate that some of these are attained \emph{only} by lower-dimensional sections. This is evident for hyperplane sections with fewer than $d+1$ vertices (by a simple dimension count), but these are not the only cases. These observations motivate the following conjecture, which we have verified computationally for $d \le 6$.
\begin{conjecture}
    There are no $(d-1)$-dimensional slices of the cube $C_d$ with $2^i$ vertices, for any $i$ satisfying $2^i\leq 2d-3$.
\end{conjecture}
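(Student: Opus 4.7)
The plan is to translate the statement into a question about edge cuts in the $1$-skeleton of $C_d$, and then apply the edge-isoperimetric inequality on the hypercube, combined with the rigidity provided by linear separability.

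First, I would observe that for a generic affine hyperplane $H$, each vertex of the slice $C_d \cap H$ arises as the transverse intersection of $H$ with a single edge of $C_d$. Consequently, the number of vertices of a generic $(d-1)$-dimensional slice equals the number of edges of $C_d$ separated by the induced partition $V(C_d) = V^+ \sqcup V^-$, where $V^\pm$ are the vertex sets of $C_d$ on each side of $H$. Non-generic slices are handled by a limiting argument in the spirit of \Cref{lem:facets_generic_central_slices}. The conjecture is thereby reformulated as: no partition of $\{-1,1\}^d$ realizable by a linear threshold function has edge cut of size $2^i$ whenever $2^i \leq 2d-3$.

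The second step is to invoke the edge-isoperimetric inequality on the hypercube (Harper, Hart, Lindsey): if $|V^+| = k$, then the cut has size at least $\iota(d,k) := kd - 2\mu(k)$, where $\mu(k) = \sum_{j=0}^{k-1} s(j)$ and $s(j)$ is the Hamming weight of $j$. Equality is attained by the Hamming-ball initial segments; for $k = 2^j$ the unique extremizer (up to the $B_d$-action) is a $j$-dimensional subcube of $C_d$, giving cut exactly $(d-j) 2^j$. For a target cut $c = 2^i \leq 2d-3$, the inequality $\iota(d,k) \leq c$ restricts $k$ to a small finite list that one can enumerate; solving the Diophantine condition $(d-j)2^j = 2^i$ together with near-extremal analysis for non-power-of-two $k$ then pinpoints the candidate partitions $V^+$ that could realize the cut exactly $2^i$.

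Finally, for each candidate partition I would check separability: whether $V^+ = \{v \in \{-1,1\}^d : \langle a, v\rangle > b\}$ for some $(a, b)$. Subcube partitions are separable only by hyperplanes parallel to a face of $C_d$, and the resulting slice is combinatorially a lower-dimensional cube with $2^{d-1}$ vertices, a number that lies well outside the range $\leq 2d-3$ for the relevant $d$, so such slices never attain the target vertex count. For non-subcube near-extremizers, one would argue directly or via the constrained combinatorics of threshold functions, for instance via the monotone structure imposed by a separating vector $a$ in a Gray-code-like ordering of $\{-1,1\}^d$. The main obstacle will be to make this last step tight: the edge-isoperimetric inequality applies to arbitrary subsets, while only linearly separable ones arise from hyperplanes, and capturing this restriction uniformly in $d$ may require either a refined isoperimetric bound for threshold functions or an inductive argument on $d$, anchored by the computational verification for $d \leq 6$.
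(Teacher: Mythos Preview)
This statement is presented in the paper as an open \emph{conjecture}: the authors give no proof and report only a computational check for $d \le 6$. There is therefore no argument in the paper to compare against, and your submission is---by your own admission in the final paragraph---a strategy outline rather than a proof.

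Two concrete problems stand out. First, the reduction of the vertex count to an edge-cut count is valid only for generic hyperplanes; when the slice passes through vertices of $C_d$, those cube vertices are themselves vertices of the slice and the count is no longer a cut size. Your ``limiting argument'' is not supplied, and since perturbing a non-generic slice typically \emph{increases} its vertex count (this is precisely what the paper's proposition on maximal-vertex slices exploits), ruling out the value $2^i$ among generic cuts does not rule it out for degenerate slices. Second, your handling of the extremal subcube case is mistaken: the $j$-subcube $V^+=\{v : v_1=\dots=v_{d-j}=1\}$ is separated by the hyperplane $x_1+\dots+x_{d-j}=d-j-\tfrac12$, which is not parallel to a facet for $j<d-1$, and the resulting generic slice has $(d-j)\,2^{j}$ vertices (for $j=0$ this is the $(d-1)$-simplex with $d$ vertices), not $2^{d-1}$. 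With the extremizer analysis broken and the ``near-extremal'' and separability steps left entirely open for general $d$, the outline does not close the conjecture.
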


Let us now focus on the maximum possible number of vertices of a slice of a cube.
O`Neil showed that this number is at most $\bigl\lceil \tfrac{d}{2} \bigr\rceil \binom{d}{\lceil \frac{d}{2} \rceil}$ \cite{ONeil1971} and constructed an explicit generic affine slice attaining this bound. Our computations, summarized in \Cref{fig:4_cube_vertices,fig:5_6cube_vertices}, and detailed further in \Cref{app:distribution}, indicate that this bound can be attained not only by generic affine slices but also by generic \emph{central} slices -- a much smaller class. Indeed, this phenomenon occurs in every dimension.

\begin{theorem}\label{thm:upper_bound_vertices}
    The upper bound $\bigl\lceil \tfrac{d}{2} \bigr\rceil \binom{d}{\lceil \frac{d}{2} \rceil}$ on the number of vertices of a $(d-1)$-dimensional slice of $C_d$ is attained by a generic central slice for every $d$.
\end{theorem}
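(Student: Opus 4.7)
The plan is to exhibit, for each $d$, an explicit central hyperplane $u^\perp$ whose generic slice attains the bound. The starting point is the observation that for a generic central $u^\perp$, the vertices of $C_d \cap u^\perp$ are in bijection with the edges of $C_d$ properly crossed by $u^\perp$, and an edge parallel to $e_i$ with shared coordinates $v=(v_j)_{j\neq i} \in \{-1,1\}^{d-1}$ is crossed precisely when $\bigl|\sum_{j\neq i} u_j v_j\bigr| < |u_i|$ (the two endpoints have scalar products $\pm u_i + \sum_{j\ne i} u_j v_j$ with $u$, and these must have opposite signs). Consequently,
\[
\#\vertices(C_d \cap u^\perp) \;=\; \sum_{i=1}^d \#\Bigl\{v \in \{-1,1\}^{d-1} : \Bigl|\textstyle\sum_{j\neq i} u_j v_j\Bigr| < |u_i|\Bigr\},
\]
and it suffices to choose $u$ that pushes this count up to the O'Neil bound.

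I would split on the parity of $d$. For $d=2m+1$ odd, I would take $u=(1,1,\ldots,1)$. Genericity is immediate since $\langle u,v\rangle = \sum v_j$ is an odd integer. In each direction $i$ the edge-crossing condition reduces to $\sum_{j\ne i} v_j = 0$, a vanishing sum of $2m$ signs with exactly $\binom{2m}{m}$ solutions. Summing over the $2m+1$ directions gives $(2m+1)\binom{2m}{m}$, which equals $(m+1)\binom{2m+1}{m+1} = \lceil d/2 \rceil\binom{d}{\lceil d/2 \rceil}$ by a direct factorial manipulation.

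For $d=2m$ even, the symmetric choice fails both because of non-genericity and because the inner sums now have the wrong parity, so I would perturb to $u=(1,1,\ldots,1,\tau)$ for irrational $\tau \in (0,1)$. Genericity follows since $\langle u,v\rangle$ is a nonzero odd integer plus $\pm\tau$. No edge in direction $d$ is crossed, since the required sum of $2m-1$ signs has absolute value at least $1 > \tau$. In each direction $i<d$, the condition $\bigl|\sum_{j\neq i,\,j<d} v_j + \tau v_d\bigr| < 1$ forces the inner sum of $2m-2$ signs to vanish (otherwise its magnitude is at least $2-\tau > 1$), while $v_d \in \{-1,1\}$ remains unconstrained; this yields $2\binom{2m-2}{m-1}$ edges per direction. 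Summing over the $2m-1$ relevant directions gives $2(2m-1)\binom{2m-2}{m-1}$, which simplifies to $m\binom{2m}{m} = \lceil d/2 \rceil\binom{d}{\lceil d/2 \rceil}$.

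The main obstacle is really locating the normals: the parity constraint (inner sums of even length vanishing) together with the strict inequality leaves essentially no slack, and in the even case the perturbation $\tau$ must thread the narrow window $(0,1)$ while remaining irrational to avoid placing a cube vertex on $u^\perp$. Once the normals are identified, the remainder is routine binomial bookkeeping.
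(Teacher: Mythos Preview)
Your proof is correct and follows essentially the same approach as the paper: both use $u=(1,\ldots,1)$ for odd $d$ and count crossed edges via vanishing partial sums of signs. For even $d$ the paper takes $u=(1,\ldots,1,0)$ rather than your irrational perturbation $u=(1,\ldots,1,\tau)$, which is marginally simpler (genericity still holds since the remaining $d-1$ signs sum to an odd integer, so the irrationality of $\tau$ is not actually needed), but the resulting edge count and binomial identity are identical.
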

\begin{proof}
    Every edge of the cube is parallel to a coordinate directions $e_i$ for some  $i \in [d]$. A point on such an edge can be written as $v_i(\lambda) \in \R^d$, where $(v_i(\lambda))_j \in \{-1,1\}$ for $j \in [d]\setminus \{i\}$, and $(v_i(\lambda))_i = \lambda \in [-1,1]$. 
    Given a vector $u \in \R^d$, associate to each vertex of $C_d\cap u^\perp$ of the form $v_i(\lambda)$ the sets of ``plus'' and ``minus'' indices 
    \[
    \mathfrak{p}(u)=\{j\in [d]\setminus\{i\}\mid (v_i(\lambda))_j=1,\, u_j\neq 0\},\quad
    \mathfrak{m}(u)=\{j\in [d]\setminus\{i\}\mid (v_i(\lambda))_j=-1,\, u_j\neq 0\}.
    \]
    
    First suppose that $d$ is odd and take $u=(1,\dots,1)$. The equation $\langle u,v_i(\lambda)\rangle=0$ reads $|\mathfrak{p}(u)|-|\mathfrak{m}(u)|+\lambda=0$, where $|\cdot |$ denotes the cardinality of a set, so $\lambda \in \{-1,0,1\}$. Because $\mathfrak{p}(u)\sqcup\mathfrak{m}(u)\sqcup\{i\}=[d]$, we get
    \[
        d = |\mathfrak{p}(u)| + |\mathfrak{m}(u)| + 1 = 2|\mathfrak{m}(u)| + 1 - \lambda 
        \quad\iff\quad
        2|\mathfrak{m}(u)| = d-1+\lambda \, .
    \]
    Since $d-1$ is even, it follows that $\lambda =0$ and consequently $|\mathfrak{p}(u)|=|\mathfrak{m}(u)|=\frac{d-1}{2}$. In other words, $u^\perp$ does not contain any vertex of $C_d$, and the corresponding slice is generic. 
    For each $i \in [d]$, there are $\binom{d-1}{(d-1)/2}$ choices for $\mathfrak{p}(u)$, which uniquely determines $\mathfrak{m}(u)$. The total number of vertices $v_i(\lambda)$ of the slice $C_d\cap u^\perp$ is thus
    \[
        d \binom{d-1}{\frac{d-1}{2}} 
        = \frac{d+1}{2} \binom{d}{\frac{d+1}{2}} \, .
    \]

    Now let $d$ be even and take $u=(1,\dots,1,0)$. Then, $0 =\langle u,v_i(\lambda)\rangle = |\mathfrak{p}(u)| -| \mathfrak{m}(u)| + |\{i\} \setminus \{d\}| \lambda$, and $[d] = \mathfrak{p}(u) \sqcup \mathfrak{m}(u) \sqcup \{i,d\}$. 
    If $i=d$, then $v_d(\lambda)$ is a vertex of the slice $C_d\cap u^\perp$ if and only if $|\mathfrak{p}(u)| = |\mathfrak{m}(u)|$. However, $d = |\mathfrak{p}(u)| + |\mathfrak{m}(u)| + 1 = 2|\mathfrak{p}(u)| + 1$ contradicts the assumption that $d$ is even. Thus, $u^\perp$ does not intersect edges of $C_d$ in direction $e_d$.
    If $i \neq d$, then $v_d(\lambda)$ is a vertex of the slice if and only if $|\mathfrak{p}(u)| - |\mathfrak{m}(u)| + \lambda = 0$, so $\lambda \in \{-1,0,1\}$. Because $\mathfrak{p}(u) \sqcup \mathfrak{m}(u) \sqcup \{i,d\} = [d]$, we get 
    \[
        d = |\mathfrak{p}(u)| + |\mathfrak{m}(u)| + 2 = 2|\mathfrak{m}(u)| + 2 - \lambda 
        \quad\iff\quad
        2|\mathfrak{m}(u)| = d-2+\lambda \, .
    \]
    Since $d-2$ is even, we have $\lambda =0$ and $|\mathfrak{p}(u)|=|\mathfrak{m}(u)|=\frac{d-2}{2}$. As in the previous case, this implies that $u^\perp$ does not contain any vertex of $C_d$, so the slice is generic.
    For each $i \in [d-1]$, there are $\binom{d-2}{(d-2)/2}$ choices for $\mathfrak{p}(u)$, which uniquely determines $\mathfrak{m}(u)$. The $d^{\text{th}}$ coordinate can be chosen as $+1$ or $-1$. The total number of vertices $v_i(\lambda)$ of the slice $C_d\cap u^\perp$ is thus
    \[
        2 (d-1) \binom{d-2}{\frac{d-2}{2}} 
        = \frac{d}{2} \binom{d}{\frac{d}{2}} \, .   \qedhere
    \]
\end{proof}

\Cref{thm:upper_bound_vertices} shows that for any $(d-1)$-dimensional affine subspace, the number of vertices of its intersection with the cube is maximized by restricting to intersections with generic linear subspaces. Preliminary computations suggest that an analogous phenomenon holds for intersections with $k$-dimensional subspaces.

\begin{conjecture}
    The upper bound on the number of vertices of a $k$-dimensional slice of $C_d$ can be attained by a generic central $k$-dimensional slice for every $d$ and every $k \leq d-1$.
\end{conjecture}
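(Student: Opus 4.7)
The base case $k=d-1$ is \Cref{thm:upper_bound_vertices}, and my plan is to adapt its construction-and-counting strategy. A vertex of $C_d\cap L$ for a $k$-dimensional linear subspace $L$ corresponds to a $(d-k)$-face of $C_d$ meeting $L$ in its relative interior. Writing $L^\perp=\spn(u_1,\dots,u_{d-k})$, such a face fixes $k$ coordinates of $v\in C_d$ to values in $\{-1,1\}$ and leaves $d-k$ free, and the intersection condition becomes a square linear system in the $d-k$ free coordinates. The vertex count of $C_d\cap L$ thus equals the number of sign patterns on the fixed coordinates for which the unique solution of this system lies in $[-1,1]^{d-k}$. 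My first step would be to determine the maximum of this combinatorial quantity over all choices of $u_1,\dots,u_{d-k}$, producing the analogue of O'Neil's bound for general $k$.

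For the matching construction I would try a sliding-window family of normals $u_\ell=e_\ell+e_{\ell+1}+\dots+e_{\ell+s_\ell-1}$, with support sizes $s_\ell$ selected from parity considerations so that, on each window, the argument in \Cref{thm:upper_bound_vertices} forces the corresponding free coordinate to equal $0$. When $d$ is even I would modify one of the windows to carry a single zero entry, in the spirit of the normal $(1,\dots,1,0)$ used in the hyperplane case, so that the resulting $L$ avoids every $\pm 1$-vertex of $C_d$ by a parity obstruction. The number of admissible sign patterns on the fixed coordinates should then decompose, window by window, into a product of central binomial coefficients matching the upper bound.

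The main obstacle is twofold. First, the explicit upper bound $V(d,k)$ for $k<d-1$ is not stated in the cited literature, and deriving it — presumably by a generalization of the incidence arguments of O'Neil and of De Loera et al. — is itself a substantial step. Second, because the windows overlap, the linear constraints on different free coordinates are coupled, so the clean one-coordinate parity computation of the hyperplane case will not apply directly, and checking genericity together with attainment of the bound will require a delicate joint sign-balance analysis. A potentially cleaner alternative would be an inductive descent in $k$: starting from a maximal generic central $(k+1)$-slice $L'\cap C_d$, intersect $L'$ with a further central hyperplane chosen so as to minimize the number of vertices lost, and show that this loss matches $V(d,k+1)-V(d,k)$. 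Such a reduction would localize the entire difficulty to a single-step optimization problem inside an already-explicit cube slice.
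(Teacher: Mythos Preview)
The statement you are trying to prove is explicitly a \emph{conjecture} in the paper, not a theorem. The authors give no proof; they merely remark that ``preliminary computations suggest that an analogous phenomenon holds for intersections with $k$-dimensional subspaces'' and then state the conjecture. There is therefore no paper proof to compare your proposal against.

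As for the plan itself, the two obstacles you flag are genuine obstructions, not technicalities. First, the maximal number of vertices of an affine $k$-slice of $C_d$ --- your $V(d,k)$ --- is not established in the cited literature for general $k$; O'Neil's argument is specific to hyperplanes, and producing the analogue for arbitrary $k$ is itself an open-ended step, not a routine extension. Without that bound you cannot even state what ``attained'' means, so no construction can be verified against it. Second, your sliding-window family is speculative: once the supports of the $u_\ell$ overlap, the $(d-k)\times(d-k)$ linear system for the free coordinates is genuinely coupled, and there is no apparent mechanism forcing the admissible sign patterns to factor into a product of central binomials, nor any parity obstruction guaranteeing that $L$ misses all cube vertices. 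Your inductive-descent alternative runs into the same wall, since controlling the drop $V(d,k+1)-V(d,k)$ again presupposes knowledge of $V(d,k)$.

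In short, your proposal is a sensible sketch of how one might attack the problem, but it is not a proof and the gaps you identify are real. The statement remains, as in the paper, a conjecture.
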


Observe that any centrally symmetric $k$-dimensional polytope can be realized as the intersection of some $d$-dimensional cube with a $k$-dimensional linear subspace, for a suitable $d \geq k$. Consequently, the previous conjecture would follow from the stronger statement below.

\begin{conjecture}
    For every centrally symmetric $k$-dimensional polytope $P$, the upper bound on the number of vertices of a $(k-1)$-dimensional slice of $P$ is attained by a central slice.
\end{conjecture}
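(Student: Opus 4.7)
The plan is to track how the vertex count of a slice changes as the slicing hyperplane moves, and to exploit the central symmetry of $P$ to promote an affine optimum to a central one. For a generic direction $u \in \R^k$, let $f_u(t)$ denote the number of vertices of $P \cap \{x : \langle u, x\rangle = t\}$. This is a piecewise constant function of $t$ with jumps at the values $\langle u, v\rangle$ where $v$ ranges over vertices of $P$, and by the central symmetry of $P$ it satisfies $f_u(t) = f_u(-t)$. Let $M$ denote the maximum of $f_u(t)$ over all pairs $(u,t)$, and fix $(u, t^*)$ achieving it; by symmetry we may assume $t^* \ge 0$. The goal is to produce a direction $u^*$ such that $f_{u^*}(0) = M$.

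The first step is to reduce the comparison of $f_u(0)$ with $f_u(t^*)$ to a count of edge types. Sort the endpoints of each edge $[v_1, v_2]$ of $P$ so that $\langle u, v_1\rangle \le \langle u, v_2\rangle$, and classify the edge by the pair $(I_i, I_j)$ of intervals among $I_1 = (-\infty, -t^*)$, $I_2 = (-t^*, 0)$, $I_3 = (0, t^*)$, $I_4 = (t^*, \infty)$ containing its endpoint values. Let $n_{ij}$ be the number of edges of type $(I_i, I_j)$. The central symmetry of $P$ induces the involution $n_{ij} = n_{(5-j)(5-i)}$, so in particular $n_{12} = n_{34}$ and $n_{13} = n_{24}$. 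Counting the edges crossing $\{\langle u, x\rangle = t^*\}$ and $\{\langle u, x\rangle = 0\}$ respectively gives
\[
f_u(0) - f_u(t^*) \;=\; n_{13} + n_{23} - n_{12} \, ,
\]
so $f_u(0) \ge f_u(t^*)$ exactly when $n_{13} + n_{23} \ge n_{12}$.

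The second step is to exhibit a direction $u^*$ attaining $M$ that satisfies this edge inequality. Because $M$ is attained on an open full-dimensional chamber of the slicing hyperplane arrangement of $P$, there is room to vary $u$. One strategy is to deform $u$ continuously inside this chamber; along the path, each $n_{ij}$ changes only when crossing walls of the associated arrangement, and one argues that the chamber must contain at least one direction satisfying $n_{13} + n_{23} \ge n_{12}$. A complementary approach is to mimic the explicit construction of \Cref{thm:upper_bound_vertices}: select $u^*$ so that $(u^*)^\perp$ partitions the vertices of $P$ as symmetrically as possible while cutting transversally through many edges, and verify optimality directly from the edge-type identity above.

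The main obstacle lies in step two. In contrast to the cube, a general centrally symmetric $k$-polytope has no canonical balancing direction: vertex coordinates are unconstrained, vertex degrees need not be uniform, and the symmetry group may reduce to $\{\pm \mathrm{id}\}$. Making the deformation argument rigorous will likely require a detailed understanding of the slicing arrangement -- an analogue, for general $P$, of the threshold arrangement of the cube discussed in the introduction -- combined with a topological or averaging argument over the set of directions realizing $M$, and possibly an induction exploiting that central slices of $P$ are themselves centrally symmetric polytopes to which the conjecture would apply.
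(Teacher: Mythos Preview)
The statement you are attempting to prove is labeled a \emph{Conjecture} in the paper: the authors explicitly leave it open and provide no proof. Consequently there is no argument in the paper to compare your proposal against, and what you have written should be read as a proof strategy for an open problem rather than a reconstruction of an existing proof.

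On its own merits, your Step~1 is correct and useful. The identity
\[
f_u(0) - f_u(t^*) \;=\; n_{13} + n_{23} - n_{12}
\]
follows from the edge classification and the symmetry $n_{12}=n_{34}$, and it cleanly isolates what must be shown. The genuine gap is exactly where you locate it: Step~2. Neither of the two strategies you sketch is close to a proof. For the deformation idea, varying $u$ inside the maximal chamber keeps $f_u(t^*)=M$ constant, but the quantities $n_{12}, n_{13}, n_{23}$ depend on the relative positions of the levels $-t^*, 0, t^*$ among the vertex values $\langle u, v\rangle$, and there is no mechanism forcing the inequality $n_{13}+n_{23}\ge n_{12}$ somewhere in that chamber; centrally symmetric polytopes can have $f_u$ strictly larger at $t^*$ than at $0$ for a fixed $u$, so you genuinely need to change $u$, not merely wiggle it. For the explicit-construction idea, the proof of \Cref{thm:upper_bound_vertices} depends heavily on the coordinate structure and transitive symmetry of the cube (balanced $\pm 1$ coordinates, regular vertex degrees), none of which survives for a general centrally symmetric polytope whose symmetry group may be just $\{\pm\mathrm{id}\}$.

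In short: your reduction is sound, but the proposal does not supply the missing idea, and you acknowledge as much. As it stands this is an outline of the difficulty, not a proof, which is consistent with the statement's status as a conjecture.
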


We conclude this section by strengthening \cite[Lemma 2.1]{deloera2025numberverticeshyperplanesection} and showing that for any polytope, a slice with the largest possible number of vertices must be generic with respect to that polytope. This applies in particular to cubes, but we state and prove it in full generality.
\begin{proposition}\label{prop:max_vertex_slice_generic}
    Let $P\subset \R^d$ be a full-dimensional polytope with $d\geq 3$. If $P \cap H$ is a $(d-1)$-dimensional affine slice having the largest possible number of vertices among all such slices of $P$, then $H$ does not contain any vertex of $P$. 
\end{proposition}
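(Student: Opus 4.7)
My strategy is to prove the contrapositive by a local perturbation argument. Given a hyperplane $H$ containing a vertex of $P$, I would construct a nearby hyperplane $H'$ avoiding every vertex of $P$ with $|\vertices(P\cap H')|>|\vertices(P\cap H)|$, contradicting maximality.

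Partition $\vertices(P)=V^+\sqcup V^0\sqcup V^-$ by the strict sign of the defining linear form of $H$, with $V^0\neq\emptyset$, and for each $v_0\in V^0$ let $d_+(v_0),d_-(v_0),d_0(v_0)$ denote the number of edges of $P$ from $v_0$ to $V^+$, $V^-$, and $V^0\setminus\{v_0\}$, respectively. A perturbed hyperplane $H'$ sufficiently close to $H$ and avoiding every vertex of $P$ assigns each $v_0$ to a side, encoded by $\sigma\colon V^0\to\{+,-\}$ with $S_\pm=\sigma^{-1}(\pm)$. A vertex-by-vertex count of the edges of $P$ newly crossed by $H'$ in their interior shows that the change in slice-vertex count is
\[
\Delta(\sigma):=|\vertices(P\cap H')|-|\vertices(P\cap H)|=\sum_{v_0\in S_-}d_+(v_0)+\sum_{v_0\in S_+}d_-(v_0)+E(S_+,S_-)-|V^0|,
\]
where $E(S_+,S_-)$ counts the edges of $P$ contained in $H$ whose endpoints lie in different parts of the partition. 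A sign assignment $\sigma$ is realized by some valid $H'$ precisely when $S_+$ and $S_-$ can be separated by a hyperplane of $\R^d$. The task thus reduces to finding a realizable $\sigma$ with $\Delta(\sigma)\ge 1$.

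For $|V^0|=1$, the single vertex $v$ satisfies $d_0(v)=0$ and $d_+(v)+d_-(v)=\deg_P(v)\ge d\ge 3$, so $\max(d_+(v),d_-(v))\ge 2$ and the trivial sign choice yields $\Delta\ge 1$. For $|V^0|\ge 2$, the key structural input is that $V^0$ is in convex position inside $H$: each $v_0\in V^0$ is an extreme point of $P\cap H$, and hence of $V^0$. Consequently every singleton isolation $(\{v\},V^0\setminus\{v\})$ is realizable. The isolation formula reads $\Delta=d_+(v)+d_0(v)+\sum_{u\neq v}d_-(u)-|V^0|$, and combining the inequality $d_+(v)+d_0(v)\ge d-d_-(v)$ (from $\deg_P(v)\ge d$) with the connectivity of the 1-skeleton of $P$ guaranteed by Balinski's theorem, I would identify some $v\in V^0$ and a sign for which $\Delta\ge 1$. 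The sub-case $V^+=\emptyset$, where $P\cap H$ is a facet $F$ of $P$, is handled analogously: connectivity forces at least one edge leaving $F$, and isolating a suitably chosen boundary vertex creates enough new cross-edges to outweigh the loss. The main obstacle is ensuring positivity of $\Delta$ in the regime of large $|V^0|$ with many edges of $P$ inside $H$; convex position of $V^0$ is precisely what supplies enough realizable partitions, while the universal bound $\deg_P(v)\ge d\ge 3$ provides the arithmetic needed to conclude.
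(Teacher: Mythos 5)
Your framework is sound and your approach is genuinely different from the paper's: you perturb $H$ directly to a generic hyperplane $H'$ and bookkeep the change $\Delta(\sigma)$ in one step, whereas the paper proceeds in two stages. The paper first performs a \emph{rotation} of $H$ about a fixed vertex $v_0\in V^0$, using a linear functional $u'$ strictly positive on $V^0\setminus\{v_0\}$ (which exists because $v_0$ is extremal in $\conv(V^0)$, the same convex-position fact you invoke); this pushes all other vertices of $V^0$ off $H$ without decreasing the count, since each $v\in V^0$ has at least one edge into $V^+$, the edge directions at $v$ spanning $\R^d$. It then performs a parallel \emph{translation} to remove the last vertex with a strict gain, which is precisely your $|V^0|=1$ case. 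The two-step reduction only needs $\Delta\ge 0$ in the first stage, whereas your one-shot argument must reach $\Delta\ge 1$ outright, which is where your sketch leaves a genuine gap.

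Concretely, the existence of a realizable $\sigma$ with $\Delta(\sigma)\ge 1$ is asserted but not established, and the appeal to Balinski's theorem does not obviously help. A single isolation $S_-=\{v\}$ gives
\[
\Delta_-(v)=d_+(v)+d_0(v)+\sum_{u\ne v}d_-(u)-|V^0|,
\]
and with $V^+,V^-\ne\emptyset$ (so $d_\pm(u)\ge 1$ for all $u\in V^0$ by the tangent-cone argument) this is only $\ge 0$ in general; it can equal $0$ when $d_+(v)=1$, $d_0(v)=0$, and $d_-(u)=1$ for all $u\ne v$. The inequality $d_+(v)+d_0(v)\ge d-d_-(v)$ alone does not rescue this. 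What does close the gap is to consider both isolation signs for the same $v$: writing $\Delta_+(v)$ for the choice $S_+=\{v\}$, one finds
\[
\Delta_-(v)+\Delta_+(v)=\bigl(\deg_P(v)+d_0(v)\bigr)+\sum_{u\ne v}\bigl(d_+(u)+d_-(u)\bigr)-2|V^0|\;\ge\; d+2(|V^0|-1)-2|V^0|=d-2\ge 1,
\]
so one of the two integers is $\ge 1$; both isolations are realizable by convex position and both give $(d-1)$-dimensional slices. In the facet case $V^+=\emptyset$, isolating $S_-=\{v\}$ (pushing $v$ toward the interior) yields $\Delta\ge d_0(v)-1\ge(d-1)-1\ge 1$, because $v$ has at least $d-1$ edges inside $F$ and every $u\in V^0$ has an edge to $V^-$. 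None of this needs Balinski; the degree bound $\deg_P(v)\ge d$, the full-dimensionality of tangent cones, and the extremality of $V^0$ in $P\cap H$ already suffice. With these additions your argument becomes a complete and correct alternative to the paper's two-step reduction.
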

\begin{proof}
    The strategy is to show that any slice $P\cap H$ containing a certain number of vertices of $P$ can be perturbed slightly so that $H$ contains exactly one vertex of $P$, without diminishing the number of vertices of the slice. If we then prove that, for a slice containing exactly one vertex of \(P\), there exists another slice with strictly more vertices, the claim follows. We start from this latter statement for simplicity.

    Assume that the affine hyperplane $H$ contains exactly one vertex of $P$. Without loss of generality, this vertex is the origin and $H = u^\perp$ for some $u\in\R^d$. Assume that the edges of $P$ incident to the origin are 
    \[
    [0, v_i] \quad \text{for } i = 1,\ldots,k
    \]
    for some $k \in \N$. Since the slice $P\cap u^\perp$ has dimension $d-1$ and contains no other vertex of $P$, the hyperplane $u^\perp$ cannot be supporting $P$. Therefore, it induces a nontrivial partition (i.e., both sets have at least one element) of the vertices $v_1,\ldots,v_k$ according to the sign of $\langle u,v_i\rangle$. Because $k\geq d \geq 3$, one part has at least two elements, and without loss of generality, let this be the set with positive inner product.
    In other words, there exists $I\subset [k]$ with $|I|\geq 2$ such that $\langle u, v_i \rangle >0$ for all $i \in I$. Choose $\varepsilon >0$ such that
    \[
    \varepsilon < \min \{ \langle u, v \rangle \mid \langle u, v \rangle >0,  v \in \vertices(P)\},
    \]
    and consider the parallel hyperplane $H_\varepsilon = \{x\in\R^d \mid \langle u, x \rangle = \varepsilon\}$. Then, the number of vertices of the associated slice satisfies 
    \[
    |\vertices(P\cap  H_\varepsilon)| = |\vertices(P\cap u^\perp)| - 1 + |I| > |\vertices(P\cap u^\perp)|.
    \]
    
    Now assume that the hyperplane $H$ contains more than one vertex of $P$. Without loss of generality, one of these vertices is the origin and $H = u^\perp$. Since $P$ is $d$-dimensional, at least one of the halfspaces defined by $u^\perp$ contains other vertices of $P$; assume this is the side $\{\langle u, x \rangle>0\}$. Let $v_1,\ldots,v_k$ be the remaining vertices of $P$ lying in $u^\perp$. Choose $u'\in \R^d$ so that $\{\langle u', v_i \rangle\geq 0\}$ for all $i = 1,\ldots,k$. Because there are vertices of $P$ in $\{\langle u, x \rangle>0\}$, each $v_i$ is the endpoint of at least one edge contained in $\{\langle u, x \rangle \geq 0\}$ and not in $\{\langle u, x \rangle=0\}$. Pick $\varepsilon >0$ such that
    \[
    \varepsilon < \min \{ \alpha >0 \mid \langle u - \alpha u', v \rangle > 0, \langle u, v \rangle >0,  v \in \vertices(P)\},
    \]
    and consider the rotated hyperplane $(u - \varepsilon u')^\perp$.
    By construction, this hyperplane intersects every edge of the form $[v_i,w]$ with $w\in \vertices(P)$ and $\langle u, w \rangle >0$. Indeed,
    \[
    0 = \langle u - \varepsilon u', (1-\lambda) v_i + \lambda w \rangle = 0 + (1-\lambda) \varepsilon \langle u', v_i \rangle + \lambda \langle u - \varepsilon u', w \rangle
    \]
    is satisfied by
    \[
    \lambda = \frac{\varepsilon\langle u', v_i \rangle}{\varepsilon\langle u', v_i \rangle + \langle u - \varepsilon u', w \rangle} \quad\in [0,1].
    \]
    Let $E_i\geq 1$ be the number of such edges of $P$ contained in $\{x \in \R^d \mid  \langle u, x \rangle \geq 0\}$ and having $v_i$ as endpoint. Then
    \[
    |\vertices(P\cap  (u - \varepsilon u')^\perp)| = |\vertices(P\cap u^\perp)| - k + \sum_{i=1}^k E_i \geq  |\vertices(P\cap u^\perp)|. \qedhere
    \]
\end{proof}

This result agrees with our cube computations: slices with the largest number of vertices must be generic. For cubes even more is true, namely that central slices with the largest number of vertices must be generic central slices. This follows directly from \Cref{prop:max_vertex_slice_generic} together with \Cref{thm:upper_bound_vertices}. Moreover, a minor adaptation of the proof of \Cref{prop:max_vertex_slice_generic} shows that when $P$ is centrally symmetric, the central slice (i.e., one containing the center of symmetry) with largest possible number of vertices must be a generic central slice.

\section{Exact and certified numerical algorithms}\label{sec:algo}

In this section we examine the algorithms used to compute the data discussed above. We describe two main routines, which we will compare, along with several preprocessing steps. These algorithms apply to general polytopes, not only to cubes, so we present them in full generality. The setting differs slightly depending on whether we consider central or affine slices. When a $d$-dimensional polytope $P$ is embedded in $\R^d$ with its center at the origin, all central slices are obtained by intersecting $P$ with hyperplanes through the origin. In contrast, to parametrize all affine slices of $P$, we embed $P$ into $\R^{d+1}$ at height $1$, namely in the hyperplane $\{x \in \R^{d+1} \mid x_{d+1} = 1\}$. In this representation, every affine slice of $P$ arises as the intersection with a hyperplane through the origin of $\R^{d+1}$.

Our approach follows naturally from \cite{Brandenburg2025}, but we include it here for completeness. Let $P$ be a $d$-dimensional polytope and let $A$ be the matrix whose columns are the vertices of $P$. We consider the hyperplane arrangement
\begin{equation}\label{eq:master_hyp_arrangement}
\mathcal{H}_A = \mathcal{H}_P = \{ v^\perp \mid v \in \operatorname{cols}(A)=\vertices(P)\}.
\end{equation}
Since $P$ is $d$-dimensional, each hyperplane section $P\cap u^\perp$ has dimension at most $(d-1)$. By \cite[Lemma 2.4]{Berlow2022}, if $u_1, u_2$ belong to the same cell of the hyperplane arrangement $\mathcal{H}_P$ in \eqref{eq:master_hyp_arrangement}, then they induce the same partition of the vertices of $P$, and hence the slices $P\cap u_1^\perp$ and $P\cap u_2^\perp$ have the same combinatorial type. Therefore, to enumerate all combinatorial types of slices of $P$, it suffices to collect one representative point per cell of $\mathcal{H}_P$ and compute the combinatorial type of the corresponding slice.

The first step -- collecting one representative point per cell -- encodes the main computational complexity of the algorithm, and we develop two different approaches for it. We refer to the first as the \emph{exact algorithm} which is outlined in \Cref{algo:exact}. This method computes the full cell decomposition of the hyperplane arrangement (see \cite{Kastner2020} for the implementation in \texttt{Polymake}). Once the decomposition is obtained, since all relevant cells of $\mathcal{H}_P$ are unbounded, for each cell it suffices to take the sum of its rays to obtain a point in its interior. 
\begin{algorithm}
    \Require{Polytope $P$}
    \Ensure{One point per cell of $\mathcal{H}_P$}
    \begin{algorithmic}[1]
        \State construct the central hyperplane arrangement $\mathcal{H}_P$  
        \State \texttt{all\_points} $\gets$ empty
        \For{cells of $\mathcal{H}_P$}
        \State compute the rays of the cell
        \State sum the rays to get a point $p$ in the relative interior of the cell
        \State append $p$ to \texttt{all\_points}
        \EndFor
        \Return \texttt{all\_points}
    \end{algorithmic}
    \caption{exact algorithm}\label{algo:exact}
\end{algorithm}

The computational complexity of \Cref{algo:exact} is polynomial in the number of vertices of $P$ for fixed dimension $d$, but grows exponentially with $d$ in general. This follows from \cite{Gritzmann1993} and is also explained in \cite[Section 4]{Brandenburg2025}. For instance, the first column of \Cref{tab:number_cells} shows the number of maximal cells of $\mathcal H_{C_d}$ for $d\leq 9$. Therefore, even when $P$ has relatively few vertices, in higher dimension (where ``high'' in this scenario means at least five or six) computing the exact decomposition of $\mathcal{H}_P$ quickly becomes infeasible.

To extend the computation further, we introduce a second method, which we refer to as the \emph{certified numerical algorithm}. The underlying theory goes back to \cite{Varchenko95:CriticalPoints,Huh2013} and has recently been generalized in \cite{Reinke2024}. For algorithmic aspects, see also \cite{Breiding2024}. For the convenience of the reader, we describe the details of the algorithm below, omitting proofs. 

Let $\ell_1,\ldots,\ell_n$ be the linear forms associated with the vertices of $P$, namely $\ell_i(x) = \langle v_i, x\rangle$, for $\vertices(P) = \{v_1,\ldots,v_n\}$. Consider the function 
\begin{equation}\label{eq:likelyhood}
    \psi(x) = \sum_{i=1}^n u_i \log|\ell_i(x)| - v \log |g(x)|,
\end{equation}
where $g$ is a generic quadratic polynomial with $g(x)>0$, and $u_i,v \in \R$ are some parameters. By \cite[Theorem 1.2]{Reinke2024}, if $v$ is sufficiently large (specifically, if $2v \geq \sum_{i=1}^n u_i$), then the number of critical points of $\psi$ equals the number of connected components of the complement of $\cup_{i=0}^n \{\ell_i(x)=0\}$. 
Moreover, each critical point of $\psi$ lies in exactly one such component. 
Computing these critical points amounts to solving a system of polynomial equations. This can be done efficiently with classical methods from numerical algebraic geometry, such as \texttt{HomotopyContinuation.jl} \cite{Breiding2018}. Despite being numerical, this approach allows one to \emph{certify} the computed solutions (see \Cref{rmk:certification}).
\begin{algorithm}
    \Require{Matrix $A$ with columns $A_1,\dots, A_n$ 
    }
    \Ensure{One point in each maximal cell of $\mathcal{H}_A$}
    \begin{algorithmic}[1]
        \State $\psi \gets \sum_{i=1}^n u_i \log|\langle A_i, x\rangle| - v \log |g(x)|$ 
        \State sols $ \gets \operatorname{solve}(\nabla_x \psi = 0)$ 
        \Return sols
    \end{algorithmic}
    \caption{sample points in maximal cells of $\mathcal H_A$}\label{alg:solve_hc}
\end{algorithm}

\Cref{alg:solve_hc} corresponds to \cite[Algorithm 1]{Reinke2024}. Strictly speaking, in order to find the zeros of the gradient one must apply monodromy methods to solve the resulting system of rational equations. Since this is not the focus of the present paper, we omit these details and refer to \cite{Reinke2024}. When $P = C_d$, the choice $g(x) = \sum_{i} x_i^2 + 1$ is sufficiently generic, see \Cref{subsec:symmetries}. 
This algorithm alone does not fully solve our problem, since it produces points only in the \emph{maximal} cells of the hyperplane arrangement $\mathcal{H}_P=\mathcal{H}_A$. To also collect points in the lower-dimensional cells, we iteratively restrict to intersections of hyperplanes in the arrangement and apply \Cref{alg:solve_hc} in that restricted setting. Concretely, to restrict to $v_1^\perp$, where $v_1$ is the first column of $A$, we define the matrix $A|_{v_1}$ as 
\begin{equation}\label{eq:restriction_matrix}
(A|_{v_1})_{i,j} = A_{i,j} - (v_1)_{i} \frac{A_{k,j}}{(v_1)_k} \qquad \hbox{ for } i \neq k, \; j = 2, \ldots, |\vertices(P)|,
\end{equation}
for any index $k$ with $(v_1)_k\neq 0$. In practice, we simply pick the first such index.
To restrict further, for example to $v_1^\perp \cap v_2^\perp$, we repeat the same procedure on the new matrix $A|_{v_1}$.

\begin{remark}
The restricted matrix $A|_\mathfrak{v}$, for a subset $\mathfrak{v} \subset \vertices(P)$, corresponds to restricting the hyperplane arrangement $\mathcal{H}_A$ to the face $\cap_{v \in \mathfrak{v}} v^\perp$ whose codimension equals $\dim(\spn \mathfrak{v})$. From the perspective of slices, points in this face of the arrangement represent normals to slices of the cube that contain all vertices in $\mathfrak{v}$.
\end{remark}

\begin{figure}[ht]
    \centering
    \includegraphics[width=0.35\linewidth]{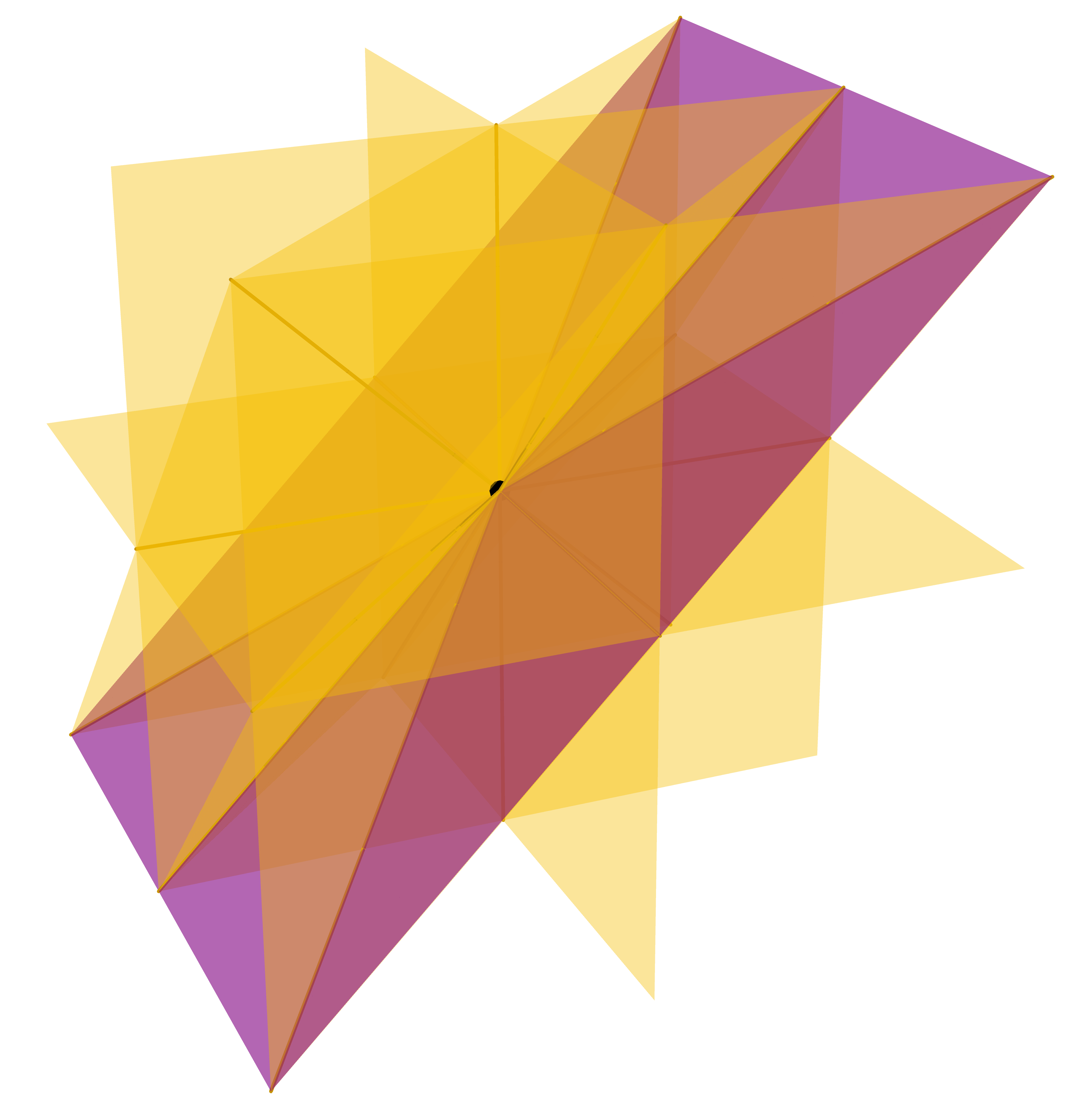}
    \qquad \quad \qquad
    \includegraphics[width=0.33\linewidth]{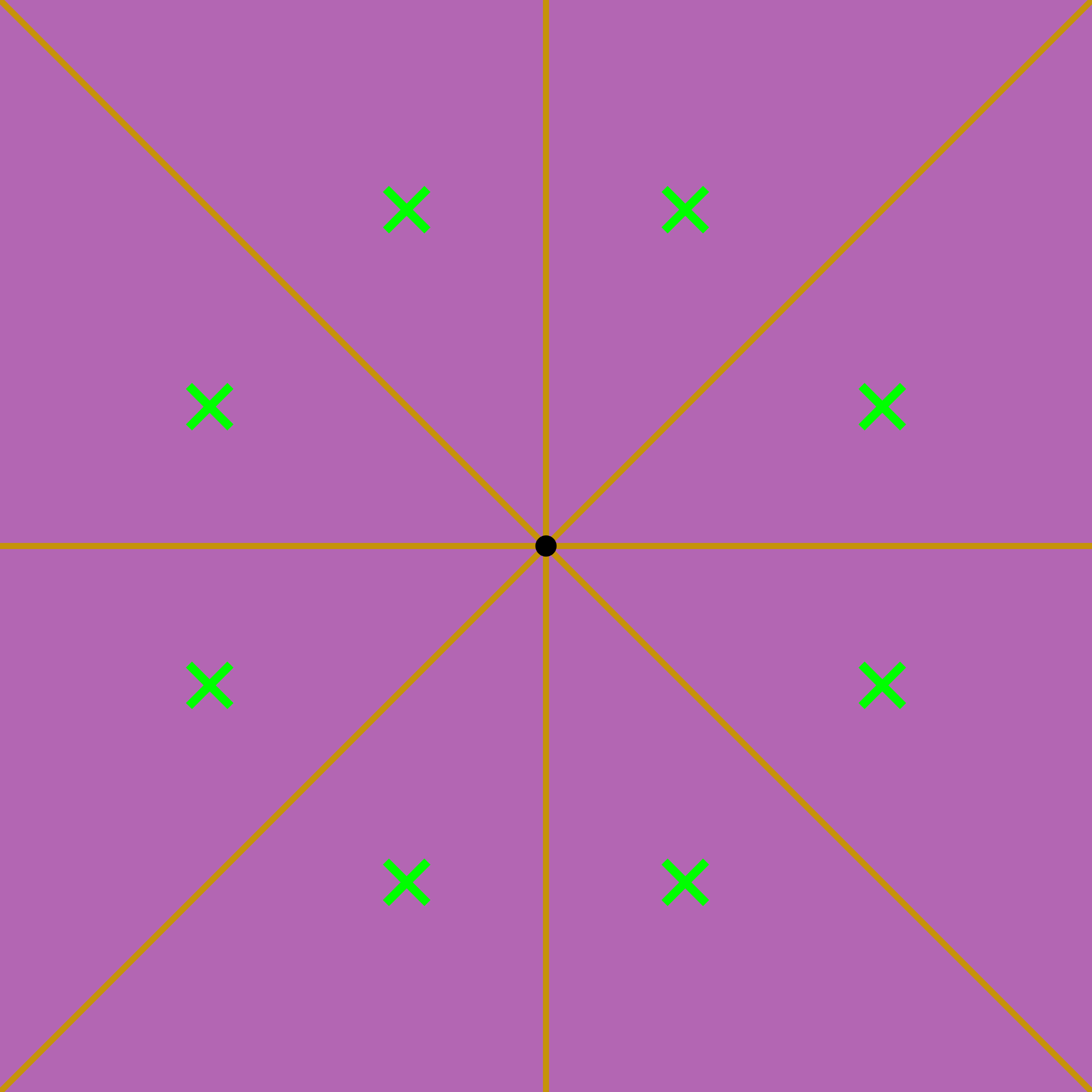}
    \caption{Hyperplane arrangements $\mathcal{H}_{A|_{v_1}}$ (left), $\mathcal{H}_{A|_\mathfrak{v}}$ (right) from \Cref{ex:restriction_matrices}.}
    \label{fig:redueced_hyp_arr}
\end{figure}
\begin{example}[Restriction to subspaces]\label{ex:restriction_matrices}
    Let $P=C_3\subset \R^4$ be embedded at height $x_4 = 1$, and consider the associated matrix
    \[
    A = \begin{blockarray}{ccccccccc}
        {\color{gray} \text{\footnotesize $v_1$}} & {\color{gray} \text{\footnotesize $v_2$}} & {\color{gray} \text{\footnotesize $v_3$}} & {\color{gray} \text{\footnotesize $v_4$}} & {\color{gray} \text{\footnotesize $v_5$}} & {\color{gray} \text{\footnotesize $v_6$}} & {\color{gray} \text{\footnotesize $v_7$}} & {\color{gray} \text{\footnotesize $v_8$}}\\
        \begin{block}{(cccccccc)c}
        -1 & 1 & -1 & 1 & -1 & 1 & -1 & 1 & {\color{gray} \text{\footnotesize 1}}\\
        -1 & -1 & 1 & 1 & -1 & -1 & 1 & 1 & {\color{gray} \text{\footnotesize 2}}\\
        -1 & -1 & -1 & -1 & 1 & 1 & 1 & 1 & {\color{gray} \text{\footnotesize 3}}\\
        1 & 1 & 1 & 1 & 1 & 1 & 1 & 1 & {\color{gray} \text{\footnotesize 4}}\\
        \end{block}
    \end{blockarray}
    \]
    with the vertices of $C_3$ as columns. For $v_1 = (-1,-1,-1,1)$ the restriction matrix is
    \begin{equation}\label{eq:restricted_matrix_v1}
    A|_{v_1} = \begin{blockarray}{cccccccc}
        {\color{gray} \text{\footnotesize $v_2$}} & {\color{gray} \text{\footnotesize $v_3$}} & {\color{gray} \text{\footnotesize $v_4$}} & {\color{gray} \text{\footnotesize $v_5$}} & {\color{gray} \text{\footnotesize $v_6$}} & {\color{gray} \text{\footnotesize $v_7$}} & {\color{gray} \text{\footnotesize $v_8$}}\\
        \begin{block}{(ccccccc)c}
        -2 & 2 & 0 & 0 & -2 & 2 & 0 & {\color{gray} \text{\footnotesize 2}}\\
        -2 & 0 & -2 & 2 & 0 & 2 & 0 & {\color{gray} \text{\footnotesize 3}}\\
        2 & 0 & 2 & 0 & 2 & 0 & 2 & {\color{gray} \text{\footnotesize 4}}\\
        \end{block}
    \end{blockarray}.
    \end{equation}
    The associated hyperplane arrangement $\mathcal{H}_{A|_{v_1}} \subset \R^3 = v_1^\perp$ is shown in \Cref{fig:redueced_hyp_arr}, left.
    Suppose now we are interested in the tuple $\mathfrak{v} = \{v_1, v_4\}$. In this case, we need to iterate the restriction procedure. The hyperplane of $\mathcal{H}_{A|_{v_1}}$ corresponding to $v_4$ is highlighted in purple. Note that we cannot apply formula \eqref{eq:restriction_matrix} with $k=1$, since this would involve division by zero in the transformed vector $v_4$, namely the third column of $A|_{v_1}$. Instead, choosing $k=2$ yields
    \begin{equation}\label{eq:restricted_matrix_v1v4}
    A|_\mathfrak{v} = \left( (A|_{v_1})_{i,j} - (A|_{v_1})_{i,3} \frac{(A|_{v_1})_{2,j}}{(A|_{v_1})_{2,3}} \right)_{i,j} = \begin{blockarray}{ccccccc}
        {\color{gray} \text{\footnotesize $v_2$}} & {\color{gray} \text{\footnotesize $v_3$}} & {\color{gray} \text{\footnotesize $v_5$}} & {\color{gray} \text{\footnotesize $v_6$}} & {\color{gray} \text{\footnotesize $v_7$}} & {\color{gray} \text{\footnotesize $v_8$}}\\
        \begin{block}{(cccccc)c}
        -2 & 2 & 0 & -2 & 2 & 0 & {\color{gray} \text{\footnotesize 2}}\\
        0 & 0 & 2 & 2 & 2 & 2 & {\color{gray} \text{\footnotesize 4}} \\
        \end{block}
    \end{blockarray}.
    \end{equation}
    The hyperplane arrangement $\mathcal{H}_{A|_\mathfrak{v}} \subset \R^2$, lying in the purple two-dimensional plane $v_1^\perp\cap v_4^\perp$, is displayed in \Cref{fig:redueced_hyp_arr}, right.
\end{example}

\renewcommand{\arraystretch}{1.4}
\begin{table}[ht!]
\hspace*{-1em}
    \centering
    \begin{tabular}{c|ccccccc}
    $d$ & 0 & 1 & 2 & 3 & 4 & 5 & 6 \\
    \hline
    $3$
    & \footnotesize 104 & \footnotesize 32 & \footnotesize 6, 8 & \footnotesize 2 & & & \\
    \hline
    $4$ 
    & \footnotesize 1882 & \footnotesize 370 & \footnotesize 32, 60 & \footnotesize 6, 8, 10 & \footnotesize 2 & & \\
    \hline
    \multirow{2}{*}{$5$}
    & \multirow{2}{*}{\footnotesize 94572} & \multirow{2}{*}{\footnotesize 11292} & \footnotesize 370, 1024 & \footnotesize 32, 60, 96, & \footnotesize 6, 8, 10, & \multirow{2}{*}{\footnotesize 2} & \\[-4pt]
     &  &  & \footnotesize 1296 & \footnotesize 98, 128 & \footnotesize 12, 14, 16 &  & \\
    \hline
    \multirow{4}{*}{$6$}
    & \multirow{4}{*}{\footnotesize 15028134} & \multirow{4}{*}{\footnotesize 1066044} & \footnotesize 11292,  & \footnotesize 370, 1024, & \footnotesize 32, 60, 96, 98, & \footnotesize 6, 8, 10, & \multirow{4}{*}{\footnotesize 2} \\[-4pt]
    &  &  & \footnotesize 47900, & \footnotesize 1296, 2258, & \footnotesize 128, 144, 146, 180, & \footnotesize 12, 14, 16, &  \\[-4pt]
    &  &  & \footnotesize 47900, & \footnotesize 2640, 3790, & \footnotesize 200, 220, 264, & \footnotesize 18, 20, &  \\[-4pt]
    &  &  & \footnotesize 73632 & \footnotesize 5040 & \footnotesize 288, 312, 336 & \footnotesize 22, 24 & \\
    \hline $7$ & \tiny 8378070864 & \tiny 347326352 &  &  &  &  & \\
    \hline $8$ & \tiny 17561539552946 & \tiny 419172756930 &  &  &  &  & \\
    \hline $9$ & \tiny 144130531453121108 & \tiny 1955230985997140 &  &  &  &  & \\
    \end{tabular}
    \caption{Possible numbers of maximal cells in the hyperplane arrangement $\mathcal{H}_{A|_\mathfrak{v}}$ in $\R^{d+1-\dim(\spn\mathfrak{v})}$ for $P = C_d \times \{1\} \subset\R^{d+1}$ or $P = C_{d+1}\subset\R^{d+1}$. The first column of the table fixes $d$, the other columns denote $\dim(\spn \mathfrak{v})$.}
    \label{tab:number_cells}
\end{table}
We collect in \Cref{tab:number_cells} the numbers of maximal cells in the hyperplane arrangements $\mathcal{H}_{A|_\mathfrak{v}}$ for all subsets $\mathfrak{v}\subset\vertices(C_d)$, for $d\leq 6$, in the setting of affine slices, namely for $C_d\subset\R^{d+1}$. As explained in \Cref{sec:generic-central-slices}, the maximal cells of $\mathcal H_A$ are in bijection with threshold functions on $d$ (or fewer) variables, and the first column of \Cref{tab:number_cells} reports the corresponding numbers, according to the OEIS sequence \href{https://oeis.org/A000609}{A000609}.
The second column displays the \emph{resonance arrangement} from \cite[Section 6.3]{Brysiewicz2023}, which coincides with OEIS sequence \href{https://oeis.org/A034997}{A034997}. This can be seen by considering the cube $[0,1]^d$ instead of $[-1,1]^d$, and taking $\mathfrak{v} = \{(0,\ldots,0)\}$.

Once we have the restricted matrix $A|_\mathfrak{v}$, we compute the critical points of the associated function $\psi$ using \Cref{alg:solve_hc}. This yields a list of points in $\R^m$, lying inside the linear space $\cap_{v\in \mathfrak{v}} v^\perp$, where $m+\dim(\spn \mathfrak{v})$ is the dimension of the ambient space. To express these points in the original coordinates of the ambient space, we must invert the restriction procedure. 

Consider $A|_{\mathfrak{v}}$ and $A|_{\mathfrak{v}\cup v_\ell}$, with $\dim(\spn (\mathfrak{v} \cup v_\ell)) = \dim(\spn \mathfrak{v}) +1$, and let $\widetilde{v}_\ell$ be the column of $A|_{\mathfrak{v}}$ corresponding to $v_\ell$. 
Let $k$ be the index of the row removed when going from $A|_{\mathfrak{v}}$ to $A|_{\mathfrak{v}\cup v_\ell}$, and let $p\in \R^{m}$ be a point in $(\cap_{v\in \mathfrak{v}} v^\perp) \cap v_\ell$. We map $p$ to the point $\hat{p} = (p_{i_1},\ldots,p_k, \ldots,p_{i_m})$, $i_1 < \ldots < k < \ldots < i_m$, such that 
\[
\langle (A|_{\mathfrak{v}})_{v_\ell}, \hat{p}\rangle = 0.
\]
We repeat this step iteratively to recover the missing coordinates, until we get back to ambient space.
All of the above steps are summarized in \Cref{alg:numerical}.

\begin{algorithm}
    \Require{Polytope $P$ 
    }
    \Ensure{One point per cell of $\mathcal{H}_P$}
    \begin{algorithmic}[1]
        \State \texttt{all\_points} $\gets$ empty
        \State $A \gets$ matrix with columns the vertices of $P$
        \State \texttt{sols} $\gets$ \Cref{alg:solve_hc}$(A)$ 
        \State append \texttt{sols} to \texttt{all\_points}
        \For{$\mathfrak{v}$ tuple of vertices of $P$}\label{algo_step:vert_tuples}
        \State construct the matrix $A|_\mathfrak{v}$
        \State\label{step:solve_lower} \texttt{sols\_lower} $\gets$ \Cref{alg:solve_hc}$(A|_\mathfrak{v})$, the solutions in $\R^{d+1-\dim(\spn\mathfrak{v})}$ 
        \State\label{step:back_to_space} \texttt{sols\_}$\mathfrak{v} \gets$ embedding of \texttt{sols\_lower}$\subset \cap_{v \in \mathfrak{v}} v^\perp$ into $\R^{d+1}$
        \State append \texttt{sols\_}$\mathfrak{v}$ to \texttt{all\_points}
        \EndFor
        \Return \texttt{all\_points}
    \end{algorithmic}
    \caption{certified numerical algorithm}\label{alg:numerical}
\end{algorithm}

We illustrate steps \ref{step:solve_lower} and \ref{step:back_to_space} of \Cref{alg:numerical} in a low-dimensional setting.
\begin{example}[Steps \ref{step:solve_lower} and \ref{step:back_to_space} of \Cref{alg:numerical}]
    Let $P=C_3\subset \R^4$, let $\mathfrak{v} = \{v_1, v_4\} = \{(-1,-1,-1,1), (1,1,-1,1)\}$ as in \Cref{ex:restriction_matrices}, and consider the restricted matrix $A|_\mathfrak{v}$ from \eqref{eq:restricted_matrix_v1v4}. In step \ref{step:solve_lower} of \Cref{alg:numerical}, we apply \Cref{alg:solve_hc} to compute the critical points of 
    \[
    \psi(x_2,x_4) = u_1 \log |2x_2| + u_2 \log|2 x_4| + u_3 \log|2x_4-2x_2| + u_4 \log|2x_2+2x_4| + v \log |x_2^2+x_4^2+1|.
    \]
    In practice, before constructing $\psi$, we discard redundant columns of $A|_\mathfrak{v}$ (up to sign). Here, we keep only one among $(2,0)$ and $(-2,0)$, and one among the two copies of $(0,2)$.
    We then solve the polynomial system
    \begin{gather*}
        \frac{u_1}{x_2} - \frac{2u_3}{-2x_2 + 2x_4} + \frac{2u_4}{2x_2 + 2x_4} - \frac{2vx_2}{1 + x_2^2 + x_4^2} = 0 \\
        \frac{u_2}{x_4} + \frac{2u_3}{-2x_2 + 2x_4} + \frac{2u_4}{2x_2 + 2x_4} - \frac{2vx_4}{1 + x_2^2 + x_4^2} = 0
    \end{gather*}  
    with $(u,v) = (1,1,1,1,4)$, obtaining in about $0.06$ seconds the eight points
    \[
    (\pm 0.38268, \pm 0.92387),\; ( \pm 0.92387, \pm 0.38268),
    \]
    shown as green diagonal crosses in \Cref{fig:redueced_hyp_arr}, right. 
    In step \ref{step:back_to_space}, we embed these points back into $\R^4$. This is done by reintroducing the coordinates one at a time, in reverse order of restriction. First, the restricted $v_4$ (the third column of $A|_{v_1}$ in \eqref{eq:restricted_matrix_v1}) enforces $-x_3+x_4=0$.
    Next, $v_1$ imposes $-x_1-x_2-x_3+x_4=0$. Altogether, $\mathfrak{v}=\{v_1,v_4\}$ contributes to \texttt{all\_points} the eight points
    \[
    (\pm (0.38268, -0.38268), \pm (0.92387, 0.92387)),\; ( \pm (0.92387, -0.92387), \pm (0.38268, 0.38268)).
    \]
    This number of solutions matches one of the cases for $d=3$ in \Cref{tab:number_cells} , corresponding to slices through two linearly independent vertices of $C_3$.
\end{example}

\begin{remark}\label{rmk:certification}
    Although \Cref{alg:numerical} is based on numerical methods, its output can be certified from two points of view. First, \texttt{HomotopyContinuation.jl} uses interval arithmetic or Smale's $\alpha$-theory to prove that solutions of the polynomial system are real and pairwise distinct; see, e.g., \cite{bates2024} for a recent survey. Second, \texttt{CountingChambers.jl} allows exact computation of the number of maximal open cells in a hyperplane arrangement, which equals the number of solutions to the polynomial systems we consider. Together, these guarantees certify that all solutions are found, and that they are both real and distinct.\\
    Our computations are not only certified but also numerically stable. The floating-point solutions to our systems of equations represent normals to hyperplane slices. Each such point lies in the interior of a full-dimensional cell (within an appropriate linear subspace) of the hyperplane arrangement, and within a cell the combinatorial type of the slice is constant. Thus, numerical imprecision does not affect the outcome: any small perturbation yields the same combinatorial type. Moreover, when we restrict to a lower-dimensional subspace, the resulting solutions are still floating-point approximations of normals, but the recovery of their full-dimensional coordinates is carried out by exact linear algebra. Hence, the reconstruction step introduces no additional error, and the overall computation remains numerically stable.
\end{remark}

Finally, once we have collected one representative point from each cell of the hyperplane arrangement $\mathcal{H}_P$, we construct the corresponding slices of the polytope $P$ and compare their combinatorial types. For this last step, we use the \texttt{Polymake} function \texttt{isomorphic}  \cite{GavJos:Polymake}. 
\begin{remark}
    To make the comparison efficient, we first check the $f$-vector of each new slice against those already obtained. If the $f$-vector is new, we add the slice to the list. Otherwise, we test whether the slice is isomorphic to any of the existing ones with the same $f$-vector. The command \texttt{isomorphic} performs this test by reducing it to checking graph isomorphism of the vertex-facet incidence graphs of the two slices.
\end{remark}

We summarize the discussion of this section in \Cref{alg:all}. In the step where one needs to collect one representative point per cell of the hyperplane arrangement, there is the option to either use the exact (\Cref{algo:exact}) or the certified numerical (\Cref{alg:numerical}). In order to stress that the numerical algorithm gives correct, certified output, we compare the two options where possible (see \Cref{tab:affine_slices,tab:central_slices} below).

\begin{algorithm}
    \Require{Polytope $P$ 
    }
    \Ensure{All combinatorial types of slices of $P$}
    \begin{algorithmic}[1]
        \State \texttt{slices} $\gets$ empty
        \State \texttt{all\_points} $\gets$ either \Cref{algo:exact}$(P)$ or \Cref{alg:numerical}$(P)$ 
        \For{$u \in $ \texttt{all\_points}}
        \State construct $P\cap u^\perp$
        \If{the $f$-vector of $P\cap u^\perp$ is not among the $f$-vector of any $s \in $ \texttt{slices}}
            \State append $P\cap u^\perp$ to \texttt{slices}
        \Else{ the $f$-vector of $P\cap u^\perp$ coincides with the $f$-vector of the slices in $S \subset $ \texttt{slices}}
            \If{\texttt{isomorphic}$(P\cap u^\perp, s)=$ true for any $s\in S$} 
            \State move on
            \Else{ append $P\cap u^\perp$ to \texttt{slices}}
            \EndIf
        \EndIf
        \EndFor
        \Return \texttt{slices}
    \end{algorithmic}
    \caption{all combinatorial types of slices}\label{alg:all}
\end{algorithm}

\subsection{Symmetries}\label{subsec:symmetries}
So far, the algorithm works for any polytope. When $P=C_d$, however, the high degree of symmetry can be exploited to significantly reduce the computational complexity. While a similar reduction is possible for any polytope with a nontrivial symmetry group, we restrict the discussion to the cube for clarity.

Given a hyperplane $H = \{x \in \R^d \mid  \langle u,x\rangle = t\}$, the slice $C_d\cap H$ is combinatorially equivalent to $C_d\cap \pi(H)$, where $\pi(H) = \{x \in \R^d \mid  \langle \pi(u),x\rangle = t\}$ for any signed permutation $\pi \in B_d$. This follows from the fact that $B_d$ is the full symmetry group of the cube. Hence, to accelerate the computation it is natural to exploit these symmetries  
and sample points from fewer cells. In principle, this can be incorporated directly into the certified numerical algorithm, \Cref{alg:numerical}. Indeed, the command \texttt{HomotopyContinuation.solve} accepts an optional group input describing symmetries of the solution set. However, there is a subtle mismatch between geometry and computation. Once we add the auxiliary function $g$ in the definition of $\psi$ in \eqref{eq:likelyhood}, we may break some of the symmetries of the solution set. For the cube, this issue can be avoided by choosing
\[
g(x) = \sum_{i} x_i^2 +1,
\]
which is generic enough for our purposes and invariant under the action of $B_d$. A priori, one could attempt a similar approach when restricting to lower-dimensional cells of the arrangement. In that case, however, the relevant symmetry group changes, and it would be necessary to select a different function $g$ that is both generic and symmetry-preserving.
Since this lies beyond the scope of the present work, in our computations we simply use the involution $x\mapsto -x$ together with the above choice of $g$, which remain valid under restriction to any $\mathfrak{v} \subset \vertices(C_d)$.

Another optional argument of \texttt{HomotopyContinuation.solve} that speeds up the computation is the ``target solutions count''. This option provides a stopping criterion by specifying the exact number of solutions that need to be found. In our setting, this information is given in \Cref{tab:number_cells}, whose entries were obtained using \texttt{CountingChambers.jl}.

A further way to exploit cube symmetries is in the reduction of subsets $\mathfrak{v} \subset \vertices(C_d)$. Indeed, if $\mathfrak{v} = \pi(\mathfrak{v}')$ for some signed permutation $\pi\in B_d$, then the combinatorial types of the slices corresponding to hyperplanes through $\mathfrak{v}$ coincide with those for hyperplanes through $\mathfrak{v}'$. Here if $\mathfrak{v} = \{v_1,\ldots,v_k\}$, we write $\pi(\mathfrak{v}) = \{\pi(v_1), \ldots,\pi(v_k) \}$. The proof of this claim is given in the first paragraph of the proof of \Cref{thm:generic_central_slices}. Therefore, in step \ref{algo_step:vert_tuples} of \Cref{alg:numerical}, instead of iterating over all subsets of vertices of $C_d$, we may restrict to one representative tuple in each orbit
\[
B_d \cdot \mathfrak{v} = \{ \pi(\mathfrak{v}) \mid  \pi \in B_d\},
\]
for sets $\mathfrak{v}$ consisting of at most $d$ linearly independent vertices.
With all these precautions in place, we are now ready to establish our main theorems.
\begin{proof}[Proof of \Cref{thm:main_affine}]
    Applying \Cref{alg:all} to the cube $C_d = [-1,1]^d \times\{1\} \subset \R^{d+1}$ for $d = 3$, $4$, $5$, $6$, and using the symmetry reductions from \Cref{subsec:symmetries}, we obtain exactly 
    $4$, $30$, $344$, and $7346$ distinct combinatorial types of affine slices, respectively.
\end{proof}

\begin{proof}[Proof of \Cref{thm:main_central}]
    Applying \Cref{alg:all} to the cube $C_d = [-1,1]^d \subset \R^d $ for $d = 3$, $4$, $5$, $6$, $7$, again with the symmetry reductions from \Cref{subsec:symmetries}, yields precisely 
    $2$, $6$, $23$, $133$, and $1657$ combinatorial types of central slices, respectively.
\end{proof}

\subsection{Refined data}
In this section we highlight some further details of the slices already described in \Cref{sec:description_slices}, focusing on how the computation was carried out in practice. 
When implementing \Cref{alg:all}, we examine the cells of $\mathcal{H}_P$ dimension by dimension. Concretely, we begin with the top-dimensional cells of $\mathcal{H}_P$, which correspond to slices that do not intersect any vertex of $C_d$. Next, we consider cells of codimension one, which correspond to slices through exactly one vertex of $C_d$. After $k$ iterations, we are examining cells of codimension $k$, that is, slices through $k$ linearly independent vertices of $C_d$. This process continues until $k=d$. At each step, we compare the resulting set of combinatorial types 
with those found in the previous steps, and record how many genuinely new types arise when the slice is required to pass through one additional vertex of the cube.

We collect all this information in \Cref{tab:affine_slices}, which reports the results for cubes up to dimension $d=6$. An analogous summary for central slices is given in \Cref{tab:central_slices}. In each table, the numbers in brackets in the column indexed by $k$ denote the combinatorial types that arise for the first time when the slice passes through $k$ linearly independent vertices, that is, types that cannot already be obtained from slices through fewer than $k$ vertices.

\renewcommand{\arraystretch}{1.4}
\begin{table}[ht!]
    \centering
    \begin{tabular}{c|ccccccc|c}
     & 0 & 1 & 2 & 3 & 4 & 5 & 6 & tot \\
    \hline
    \multirow{2}{*}{$d=3$} 
    & 3 & 3 & 2 & 2 & & & & \multirow{2}{*}{4} \\
    &     & (1) & (0) & (0) & & & & \\
    \hline
    \multirow{2}{*}{$d=4$} 
    & 12 & 14 & 14 & 10 & 6 & & & \multirow{2}{*}{30} \\
    &     & (7) & (6) & (4) & (1) & & & \\
    \hline
    \multirow{2}{*}{$d=5$} 
    & \textcolor{figgreen}{58} & 103 & 129 & 105 & 52 & 14 & & \multirow{2}{*}{\textcolor{figgreen}{344}}\\
    &     & (81) & (96) & (73) & (31) & (5) & & \\
    \hline
    \multirow{2}{*}{$d=6$} 
    & \textcolor{figgreen}{554} & \textcolor{figgreen}{1482} & 2296 & 2179 & 1276 & 422 & 62 & \multirow{2}{*}{\textcolor{figgreen}{7346}} \\
    &      & \textcolor{figgreen}{(1376)} & (2078) & (1917) & (1066) & (319) & (36) & \\
    \hline
    \end{tabular}
    \caption{Number of combinatorial types of slices of $C_d$ in dimensions $d=3,4,5,6$. Slices are subdivided in distinct columns depending on how many linearly independent vertices of $C_d$ they contain. \textcolor{figgreen}{Green} values require the use of the numerical one. In all other cases, the exact and certified numerical outputs coincide. Numbers in brackets stand for \emph{new} combinatorial types, that appear for the first time in that given column.}
    \label{tab:affine_slices}
\end{table}

\renewcommand{\arraystretch}{1.4}
\begin{table}[ht!]
    \centering
    \begin{tabular}{c|ccccccc|c}
     & 0 & 1 & 2 & 3 & 4 & 5 & 6 & tot \\
    \hline
    \multirow{2}{*}{$d=3$} 
    & 2 & 1 & 1 & & & & & \multirow{2}{*}{2} \\
    & & (0) & (0) & & & & & \\
    \hline
    \multirow{2}{*}{$d=4$} 
    & 3 & 2 & 2 & 2 & & & & \multirow{2}{*}{6} \\
    & & (1) & (1) & (1) & & & & \\
    \hline
    \multirow{2}{*}{$d=5$} 
    & 7 & 6 & 6 & 5 & 3 & & & \multirow{2}{*}{23}\\
    & & (5) & (5) & (4) & (2) & & & \\
    \hline
    \multirow{2}{*}{$d=6$} 
    & \textcolor{figgreen}{21} & 28 & 34 & 30 & 18 & 7 & & \multirow{2}{*}{\textcolor{figgreen}{133}} \\
    & & (27) & (33) & (29) & (17) & (6) & & \\
    \hline
    \multirow{2}{*}{$d=7$} 
    & \textcolor{figgreen}{135} & \textcolor{figgreen}{288} & 427 & 419 & 268 & 105 & 21 & \multirow{2}{*}{\textcolor{figgreen}{1657}} \\
    &  & \textcolor{figgreen}{(287)} & (426) & (418) & (267) & (104) & (20) & \\
    \hline
    \end{tabular}
    \caption{Number of combinatorial types of \emph{central} slices of the cubes in dimensions $d=3,4,5,6,7$. We refer to \Cref{tab:affine_slices} for the meaning of columns, color-coding and numbers in brackets.}
    \label{tab:central_slices}
\end{table}

Thanks to our computation, we observe a very special behavior of central slices of the cube, which leads us to the following conjecture, verified computationally up to dimension $d=7$.
\begin{conjecture}\label{conj:central_slices_only_cube}
    Let $C_d\subset \R^d$ be the $d$-dimensional cube. For each $k\in [0,d]$, let $S_k$ denote the set of all combinatorial types of central slices of $C_d$ through exactly $k$ linearly independent vertices of the cube. Then, for all integers $k_1, k_2 \in [0,d]$ the intersection $S_{k_1}\cap S_{k_2}$ consists of exactly one combinatorial type, namely the type of $C_{d-1}$.
\end{conjecture}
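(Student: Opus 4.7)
The conjecture splits into two parts: (I) $C_{d-1}$ lies in every $S_k$, and (II) no other type lies in both $S_{k_1}$ and $S_{k_2}$ for $k_1 \neq k_2$.

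For (I), the plan is to exhibit, for each $k \in \{0, \ldots, d-1\}$, the explicit normal vector
\[
u_k \;=\; \sum_{i=1}^{d-k} e_i \;-\; (d-k)\, e_d,
\]
with zero coordinates in positions $d-k+1, \ldots, d-1$. A direct check shows that $u_k^\perp$ contains exactly the $2^k$ cube vertices of the form $(\varepsilon, \ldots, \varepsilon, \pm 1, \ldots, \pm 1, \varepsilon)$ with $\varepsilon \in \{\pm 1\}$ and arbitrary middle entries, and that these span a $k$-dimensional subspace. The affine map $(x_1, \ldots, x_{d-1}) \mapsto \bigl(x_1, \ldots, x_{d-1},\, \tfrac{1}{d-k}\sum_{i=1}^{d-k} x_i\bigr)$ is a bijection from $[-1,1]^{d-1}$ onto $C_d \cap u_k^\perp$ (since the last coordinate automatically lies in $[-1,1]$), exhibiting the slice as affinely equivalent to $C_{d-1}$.

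For (II), the strategy is to construct, for every non-$C_{d-1}$ slice type, a combinatorial invariant $k(S)$ matching the number of linearly independent cube vertices on its defining hyperplane. Classify each slice vertex as Type A (a cube vertex on $u^\perp$) or Type B (a proper interior crossing of a cube edge by $u^\perp$). Type B vertices are always simple of slice-degree $d-1$, while a Type A vertex $v$ has slice-degree $|P|\cdot|M|+|Z|$, where $(P,M,Z)$ partitions the $d$ cube-edges at $v$ according to the sign of the corresponding coordinate of $u$. The intermediate claim is that for every $k \geq 1$ a slice $S \in S_k$ with every Type A vertex simple is combinatorially $C_{d-1}$; for $d = 3$ this is vacuous since all slice vertices have degree $2$, and one directly verifies $S_1 \cup S_2 = \{C_2\}$, while for $d \geq 4$ the claim reduces to a case analysis on the admissible sign configurations with $|P|\cdot|M|+|Z|=d-1$, combined with \Cref{lem:facets_generic_central_slices} and the parametrization above. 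Granted this, the case $S_0 \cap S_k = \{C_{d-1}\}$ follows quickly: a type $T \in S_0$ is simple (generic slices are simple since all their vertices are Type B), so any realization of $T$ in $S_k$ has only simple Type A vertices, forcing $T = C_{d-1}$.

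The main remaining obstacle lies in comparing $S_{k_1}$ and $S_{k_2}$ for $k_1, k_2 \geq 1$. Here one must reconstruct, from the face lattice of a non-$C_{d-1}$ slice $S$ alone, the dimension $k$ of the linear span of its Type A cube vertices -- which, by the intermediate claim, are exactly the non-simple vertices of $S$. A promising approach is induction on $d$: each facet of $S$ is itself a slice of a $(d-1)$-dimensional face of $C_d$ and carries a well-defined invariant by the inductive hypothesis; the non-simple vertices of $S$ inherit consistent labels across the facets containing them, so $k(S)$ can be reconstructed via the facet incidence structure. A complementary tool is a deformation argument based on \Cref{thm:generic_central_slices}: a small perturbation of $u$ resolves each Type A vertex into several Type B vertices, producing a nearby generic slice whose combinatorial type records the signed partition of cube vertices and thereby constrains $k$. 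Making either approach work uniformly in $d$ is the essential difficulty, requiring a subtle interplay between the local sign data at Type A vertices and the global combinatorial rigidity of cube slices.
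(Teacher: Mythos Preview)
This statement is a \emph{conjecture} in the paper, not a theorem: the paper proves only your Part~(I) (as the Proposition immediately following the conjecture) and explicitly leaves Part~(II) open. So there is no ``paper's own proof'' of the full statement to compare against.

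\textbf{Part (I).} Your construction is essentially the paper's: up to a signed permutation your $u_k$ coincides with the paper's normal $(-(d-k),1,\ldots,1,0,\ldots,0)$, and the projection argument is identical. One slip: your formula fails at $k=0$, since $u_0=(1,\ldots,1,1-d)$ is orthogonal to $\pm(1,\ldots,1)$, so $u_0^\perp$ does contain cube vertices. The paper handles $k=0$ separately with the coordinate hyperplane $\{x_i=0\}$; you should do the same.

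\textbf{Part (II).} Here you go beyond the paper, and your intermediate claim is correct --- in fact stronger and easier than you suggest. If \emph{some} Type~A vertex $v$ is simple, then from $(|P|-1)(|M|-1)=0$ and $\langle u,v\rangle=0$ one gets $|u_j|=\sum_{i\neq j}|u_i|$ for the singleton index $j$; then for every $x\in[-1,1]^{d-1}$ the missing coordinate $x_j=-u_j^{-1}\sum_{i\neq j}u_i x_i$ automatically lies in $[-1,1]$, so the slice is affinely $C_{d-1}$. No case analysis is needed. This cleanly settles $S_0\cap S_k=\{C_{d-1}\}$ for all $k\geq 1$, which is a genuine (if modest) advance over the paper.

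\textbf{The gap.} The case $k_1,k_2\geq 1$ with $k_1\neq k_2$ is not proved, and you say so yourself. Your proposed invariant --- recover $k$ as the dimension of the span of the non-simple vertices --- does not obviously work: two realizations of the same non-$C_{d-1}$ type in $S_{k_1}$ and $S_{k_2}$ would, by your intermediate claim, both have non-simple Type~A vertices, but nothing yet forces their spans to have the same dimension. The induction-on-facets sketch runs into the problem that facets of a central slice of $C_d$ are \emph{affine} slices of $C_{d-1}$, for which no inductive hypothesis is available. The deformation idea via \Cref{thm:generic_central_slices} recovers the edge-intersection pattern only up to the $B_d$-action, which need not determine $k$. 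So while the outline is reasonable, neither route closes the gap, and the conjecture remains open --- exactly as in the paper.
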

We stress that in the definition of $S_k$ we allow a slice to contain more than $k$ vertices of $C_d$, but these vertices must span, together with the origin, a $k$-dimensional linear space.
The following result shows that the intersection $S_{k_1}\cap S_{k_2}$ considered in \Cref{conj:central_slices_only_cube} always contains at least the combinatorial type of $C_{d-1}$. What remains open -- and would complete the proof of the conjecture --
is to show that no other combinatorial type can appear in two distinct sets $S_{k_i}$.
\begin{proposition}
    Let $C_d\subset \R^d$ be the $d$-dimensional cube. For every $k\in [0,d]$ there exists a central slice of $C_d$ through exactly $k$ linearly independent vertices of the cube which is combinatorially equivalent to $C_{d-1}$.
\end{proposition}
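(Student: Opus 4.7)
The plan is to construct, for each $k$, an explicit central hyperplane $H_k$ witnessing the claim. Since a central hyperplane in $\R^d$ is $(d-1)$-dimensional, it can contain at most $d-1$ linearly independent vertices, so the statement is nontrivial only for $k \in \{0, 1, \dots, d-1\}$ (the case $k = d$ being vacuous). For $k = 0$, I would take $H_0 = \{x_d = 0\}$: the slice $C_d \cap H_0 = [-1,1]^{d-1} \times \{0\}$ is affinely $C_{d-1}$ and contains no vertex of $C_d$. For $k \in \{1, \dots, d-1\}$, setting $m = d - k$, I propose the central hyperplane
\[
H_k = \Big\{x \in \R^d \;\Big|\; m\, x_d = x_1 + x_2 + \cdots + x_m\Big\}.
\]

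The proof then proceeds in two steps. First, to show that $C_d \cap H_k$ is affinely (and hence combinatorially) isomorphic to $C_{d-1}$, I would consider the linear projection $\pi : H_k \to \R^{d-1}$ onto the first $d-1$ coordinates. For any $x' \in [-1,1]^{d-1}$, the induced value $x_d = \tfrac{1}{m}(x'_1 + \cdots + x'_m)$ automatically satisfies $|x_d| \leq 1$, so $\pi$ restricts to a bijection from $C_d \cap H_k$ onto $[-1,1]^{d-1}$. Second, to count the vertices of $C_d$ on $H_k$ and determine their linear span, a vertex $v \in \{\pm 1\}^d$ lies on $H_k$ precisely when $m v_d = v_1 + \cdots + v_m$; since $|v_1 + \cdots + v_m| \leq m$ with equality only when all $v_i$ agree, this forces $v_1 = \cdots = v_m = v_d = \sigma \in \{\pm 1\}$ with $v_{m+1}, \dots, v_{d-1}$ free. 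This produces $2^k$ vertices, all contained in the $k$-dimensional subspace $V = \{x_1 = \cdots = x_m = x_d\}$. Differences of vertices differing in a single free coordinate generate $e_{m+1}, \dots, e_{d-1}$, and subtracting these from the all-ones vertex gives $e_1 + \cdots + e_m + e_d$, so the span equals $V$ and has dimension exactly $k$.

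The main subtlety lies in choosing the coefficient correctly: $1/m$ is the critical value at which the projection $\pi$ just barely remains surjective onto $[-1,1]^{d-1}$, while the hyperplane simultaneously already meets vertices of $C_d$. A smaller coefficient would avoid all vertices (collapsing everything to the $k = 0$ case), while a larger one would truncate the projection and destroy the combinatorial type $C_{d-1}$. Beyond identifying this critical scaling, the verification reduces to routine linear algebra.
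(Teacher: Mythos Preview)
Your proof is correct and essentially identical to the paper's: up to a relabeling of coordinates, your hyperplane $H_k = \{m\,x_d = x_1 + \cdots + x_m\}$ with $m = d-k$ is precisely the paper's $u_k^\perp$ for $u_k = (-(d-k),1,\dots,1,0,\dots,0)$, and both arguments use the same coordinate projection onto $\R^{d-1}$ to exhibit the affine isomorphism with $C_{d-1}$ and identify the cube vertices on the slice. Your observation that a central hyperplane cannot contain $d$ linearly independent vertices is correct; the paper treats $k=d$ separately via $\{x_i = \pm x_j\}$, but that slice in fact realizes only $d-1$ linearly independent vertices.
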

\begin{proof}
    If $k=0$, any slice of of the form $\{x_i=0\}$ is a cube of dimension $d-1$. If $k=d$, then any slice of the form $\{x_i = \pm x_j\}$
    is again a cube of dimension $d-1$. Now suppose $1\leq k\leq d-1$ and consider the vector
    \[
    u_k = (-(d-k),\underbrace{1,\ldots,1}_{d-k},\underbrace{0,\ldots,0}_{k-1}).
    \]
    The hyperplane $u_k^\perp$ intersects $C_d$ in exactly $k$ linearly independent vertices, namely the vertices of the $(k-1)$-dimensional faces defined by $\{ x_1 = \ldots = x_{d-k+1} = \pm 1 \}$.
    Moreover, $u_k^\perp$ intersects precisely those edges of the form $(\lambda,\pm1,\ldots,\pm1)$ in exactly one value of $\lambda$.
    All remaining edges are either fully contained in $u_k^\perp$ or do not intersect it at all. 
    Define the projection $\varphi : \R^{d} \to \R^{d-1}$ which forgets the first coordinate. Then, since $u_k \neq (\pm 1, 0,\ldots,0)$, the map $\varphi\vert_{u_k^\perp}$ is an affine isomorphism, which restricts to an affine isomorphism on $C_d \cap u_k^\perp$, whose image is $C_{d-1}$. Therefore, their face lattices are isomorphic, hence the slice $C_d \cap u_k^\perp$ is combinatorially equivalent to $C_{d-1}$.
\end{proof}

\begin{table}[!h]
    \centering
    \begin{tabular}{c|c|c|c|c}
        $0$ & $1$ & $2$ & $3$ & $4$ \\
        \hline
        $[4, 6, 4]$ & $[4, 6, 4]$ & $[4, 6, 4]$ & $[4, 6, 4]$ & $[4, 6, 4]$ \\
        $[6, 9, 5]$ & $[6, 9, 5]$ & $[6, 9, 5]$ & $[6, 9, 5]$ & $[6, 9, 5]$ \\
        \central{$[8, 12, 6]$} & $[7, 11, 6]$ & $[7, 11, 6]$ & $[7, 11, 6]$ & $[6, 12, 8]$ \\
        $[8, 12, 6]$ & $[8, 12, 6]$ & \central{$[8, 12, 6]$} & $[7, 12, 7]$ & $[7, 12, 7]$ \\
        $[10, 15, 7]$ & $[8, 12, 6]$ & $[8, 12, 6]$ & $[8, 12, 6]$ & \central{$[8, 12, 6]$} \\
        $[10, 15, 7]$ & $[9, 14, 7]$ & $[8, 13, 7]$ & $[8, 13, 7]$ & \\
        $[10, 15, 7]$ & $[9, 14, 7]$ & $[8, 13, 7]$ & $[8, 13, 7]$ & \\
        \central{$[12, 18, 8]$} & $[10, 15, 7]$ & $[9, 14, 7]$ & \central{$[8, 14, 8]$} & \\
        \central{$[12, 18, 8]$} & $[10, 15, 7]$ & $[9, 14, 7]$ & $[9, 15, 8]$ & \\
        $[12, 18, 8]$ & $[10, 15, 7]$ & $[10, 15, 7]$ & $[9, 15, 8]$ & \\
        $[12, 18, 8]$ & $[11, 17, 8]$ & \central{$[10, 16, 8]$} & & \\
        $[12, 18, 8]$ & $[11, 17, 8]$ & $[10, 16, 8]$ & & \\
        & $[11, 17, 8]$ & $[10, 16, 8]$ & & \\
        & $[11, 17, 8]$ & $[10, 16, 8]$ & & \\
    \end{tabular}
    \caption{$f$-vectors of all possible combinatorial types of slices of $C_4$. Slices are subdivided in distinct columns depending on how many linearly independent vertices of $C_4$ they contain. Slices with the same combinatorial type appear in more than one column. \central{Blue} $f$-vectors denote combinatorial types realizable also by central slices.}
    \label{tab:4cube_slices}
\end{table} 
The data we collected contains much more than just the number of combinatorial types of slices, but the full combinatorial data of all types of slices. For instance, one may also be interested in the $f$-vectors (or equivalently, the $h$-vectors) that can arise from slices. In particular, an open problem posed in \cite{Khovanskii06:SectionsPolytopes} asks to determine the possible $h$-vectors of generic affine slices of a polytope $P$, given the $h$-vector of $P$.
\Cref{tab:4cube_slices} displays the $f$-vectors of all types of slices of the $4$-dimensional cube, grouped according to the number of linearly independent vertices of $C_4$ that the slice contains.

\section{Color classes of graphs of slices}\label{sec:graphs}

In \cite{Nakamura1980,Fukuda1997} it was computed that $C_d$ has $12, 61$ and $484$ combinatorial types of slices for $d = 3,4,5$, respectively. The attentive reader will notice that these numbers differ from those in \Cref{thm:main_affine}. Indeed, the notion of combinatorial types used in \cite{Nakamura1980,Fukuda1997} differs from the standard definition \cite{Ziegler95:LecturesPolytopes,Gruenbaum03:Polytopes}. In \Cref{sec:graph_alg}, we explain describe this alternative notion, which we refer to as \emph{color class}, and provide a method to compute it. With this terminology we reproduce the computational results of \cite{Nakamura1980,Fukuda1997} on the numbers of color classes of graphs of slices of $C_d$ in dimensions $d = 3,4,5$. As in the previous section, these computations lead to further observations, questions, and conjectures about graphs of slices of the cube, which we present in \Cref{subsec:graphs}.

\subsection{Computation and Algorithm}\label{sec:graph_alg}
Let $P$ be a polytope and $H$ a hyperplane. Recall that every vertex of a slice $Q = P \cap H$ is either a vertex of $P$ or the intersection of $H$ with the relative interior of an edge of $P$.
We consider the vertex-edge graph $G(Q) = (V(Q), E(Q))$, where $V(Q) = \vertices(Q)$ and $E(Q)$ are the vertices and edges of the polytope $Q$, equipped with a vertex coloring 
\begin{align*}
    c : V(Q) &\to \{\text{black}, \text{white}\} \\
    v &\mapsto \begin{cases}
        \text{white} & \text{if } v \in \vertices(P) \ , \\
        \text{black} & \text{otherwise} \ .
    \end{cases}
\end{align*}
A \emph{color-preserving graph isomorphism} between two such colored graphs $G(Q_1) = (V(Q_1),$ $E(Q_1), c_1)$ and $G(Q_2) = (V(Q_2),E(Q_2), c_2)$ and is a bijection $\varphi: V(Q_1) \to V(Q_2)$ such that 
\begin{enumerate}[(i)]
    \item $\varphi$ is a graph isomorphism, i.e., $(v,w) \in E(Q_1)$ if and only if $(\varphi(v), \varphi(w)) \in E(Q_2)$, and
    \item $\varphi$ is preserves colors, i.e., $c_1(v) = c_2(\varphi(v))$ for all $v \in V(Q_1)$.
\end{enumerate}
In this language, the results of \cite{Nakamura1980, Fukuda1997} count the number of colored graphs of slices of the cube, up to color-preserving graph isomorphism. We call each such isomorphism class a \emph{color class}. 
\Cref{fig:color-classes} illustrates all color classes of slices of $C_3$.
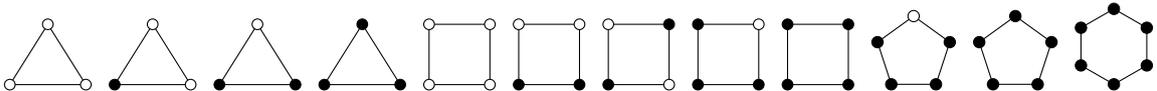
\begin{figure}[b]
    \centering
    \begin{tikzpicture}[scale=1]
    \draw (0,0) -- (1,0) -- (0.5,0.8) -- (0,0);
    \draw[fill=white] (0,0) circle (2pt);
    \draw[fill=white] (1,0) circle (2pt);
    \draw[fill=white] (0.5,0.8) circle (2pt);
\end{tikzpicture}\hspace{0.25em}
\begin{tikzpicture}[scale=1]
    \draw (0,0) -- (1,0) -- (0.5,0.8) -- (0,0);
    \draw[fill=black] (0,0) circle (2pt);
    \draw[fill=white] (1,0) circle (2pt);
    \draw[fill=white] (0.5,0.8) circle (2pt);
\end{tikzpicture}\hspace{0.25em}
\begin{tikzpicture}[scale=1]
    \draw (0,0) -- (1,0) -- (0.5,0.8) -- (0,0);
    \draw[fill=black] (0,0) circle (2pt);
    \draw[fill=black] (1,0) circle (2pt);
    \draw[fill=white] (0.5,0.8) circle (2pt);
\end{tikzpicture}\hspace{0.25em}
\begin{tikzpicture}[scale=1]
    \draw (0,0) -- (1,0) -- (0.5,0.8) -- (0,0);
    \draw[fill=black] (0,0) circle (2pt);
    \draw[fill=black] (1,0) circle (2pt);
    \draw[fill=black] (0.5,0.8) circle (2pt);
\end{tikzpicture}\hspace{0.25em}
\begin{tikzpicture}[scale=0.8]
    \draw (0,0) -- (1,0) -- (1,1) -- (0,1) -- (0,0);
    \draw[fill=white] (0,0) circle (2.5pt);
    \draw[fill=white] (1,0) circle (2.5pt);
    \draw[fill=white] (1,1) circle (2.5pt);
    \draw[fill=white] (0,1) circle (2.5pt);
\end{tikzpicture}\hspace{0.25em}
\begin{tikzpicture}[scale=0.8]
    \draw (0,0) -- (1,0) -- (1,1) -- (0,1) -- (0,0);
    \draw[fill=black] (0,0) circle (2.5pt);
    \draw[fill=black] (1,0) circle (2.5pt);
    \draw[fill=white] (1,1) circle (2.5pt);
    \draw[fill=white] (0,1) circle (2.5pt);
\end{tikzpicture}\hspace{0.25em}
\begin{tikzpicture}[scale=0.8]
    \draw (0,0) -- (1,0) -- (1,1) -- (0,1) -- (0,0);
    \draw[fill=black] (0,0) circle (2.5pt);
    \draw[fill=white] (1,0) circle (2.5pt);
    \draw[fill=black] (1,1) circle (2.5pt);
    \draw[fill=white] (0,1) circle (2.5pt);
\end{tikzpicture}\hspace{0.25em}
\begin{tikzpicture}[scale=0.8]
    \draw (0,0) -- (1,0) -- (1,1) -- (0,1) -- (0,0);
    \draw[fill=black] (0,0) circle (2.5pt);
    \draw[fill=black] (1,0) circle (2.5pt);
    \draw[fill=white] (1,1) circle (2.5pt);
    \draw[fill=black] (0,1) circle (2.5pt);
\end{tikzpicture}\hspace{0.25em}
\begin{tikzpicture}[scale=0.8]
    \draw (0,0) -- (1,0) -- (1,1) -- (0,1) -- (0,0);
    \draw[fill=black] (0,0) circle (2.5pt);
    \draw[fill=black] (1,0) circle (2.5pt);
    \draw[fill=black] (1,1) circle (2.5pt);
    \draw[fill=black] (0,1) circle (2.5pt);
\end{tikzpicture}\hspace{0.25em}
\begin{tikzpicture}%
	[scale=0.5,
	back/.style={loosely dotted, thin},
	edge/.style={color=black},
	black/.style={inner sep=1.5pt,circle,draw=black,fill=black},
    white/.style={inner sep=1.5pt,circle,draw=black,fill=white}]

\coordinate (0.58779, -0.80902) at (0.58779, -0.80902);
\coordinate (0.95106, 0.30902) at (0.95106, 0.30902);
\coordinate (0.00000, 1.00000) at (0.00000, 1.00000);
\coordinate (-0.58779, -0.80902) at (-0.58779, -0.80902);
\coordinate (-0.95106, 0.30902) at (-0.95106, 0.30902);

\draw[edge] (0.58779, -0.80902) -- (0.95106, 0.30902);
\draw[edge] (0.58779, -0.80902) -- (-0.58779, -0.80902);
\draw[edge] (0.95106, 0.30902) -- (0.00000, 1.00000);
\draw[edge] (0.00000, 1.00000) -- (-0.95106, 0.30902);
\draw[edge] (-0.58779, -0.80902) -- (-0.95106, 0.30902);
\node[black] at (0.58779, -0.80902)     {};
\node[black] at (0.95106, 0.30902)     {};
\node[white] at (0.00000, 1.00000)     {};
\node[black] at (-0.58779, -0.80902)     {};
\node[black] at (-0.95106, 0.30902)     {};
\end{tikzpicture}\hspace{0.25em}
\begin{tikzpicture}%
	[scale=0.5,
	back/.style={loosely dotted, thin},
	edge/.style={color=black},
	black/.style={inner sep=1.5pt,circle,draw=black,fill=black},
    white/.style={inner sep=1.5pt,circle,draw=black,fill=white}]

\coordinate (0.58779, -0.80902) at (0.58779, -0.80902);
\coordinate (0.95106, 0.30902) at (0.95106, 0.30902);
\coordinate (0.00000, 1.00000) at (0.00000, 1.00000);
\coordinate (-0.58779, -0.80902) at (-0.58779, -0.80902);
\coordinate (-0.95106, 0.30902) at (-0.95106, 0.30902);

\draw[edge] (0.58779, -0.80902) -- (0.95106, 0.30902);
\draw[edge] (0.58779, -0.80902) -- (-0.58779, -0.80902);
\draw[edge] (0.95106, 0.30902) -- (0.00000, 1.00000);
\draw[edge] (0.00000, 1.00000) -- (-0.95106, 0.30902);
\draw[edge] (-0.58779, -0.80902) -- (-0.95106, 0.30902);
\node[black] at (0.58779, -0.80902)     {};
\node[black] at (0.95106, 0.30902)     {};
\node[black] at (0.00000, 1.00000)     {};
\node[black] at (-0.58779, -0.80902)     {};
\node[black] at (-0.95106, 0.30902)     {};
\end{tikzpicture}\hspace{0.25em}
\begin{tikzpicture}%
	[scale=0.5,
	back/.style={loosely dotted, thin},
	edge/.style={color=black},
	vertex/.style={inner sep=1.5pt,circle,draw=black,fill=black},
    white/.style={inner sep=1.5pt,circle,draw=black,fill=white}]

\coordinate (0.86603, -0.50000) at (0.86603, -0.50000);
\coordinate (0.86603, 0.50000) at (0.86603, 0.50000);
\coordinate (0.00000, -1.00000) at (0.00000, -1.00000);
\coordinate (0.00000, 1.00000) at (0.00000, 1.00000);
\coordinate (-0.86603, -0.50000) at (-0.86603, -0.50000);
\coordinate (-0.86603, 0.50000) at (-0.86603, 0.50000);

\draw[edge] (0.86603, -0.50000) -- (0.86603, 0.50000);
\draw[edge] (0.86603, -0.50000) -- (0.00000, -1.00000);
\draw[edge] (0.86603, 0.50000) -- (0.00000, 1.00000);
\draw[edge] (0.00000, -1.00000) -- (-0.86603, -0.50000);
\draw[edge] (0.00000, 1.00000) -- (-0.86603, 0.50000);
\draw[edge] (-0.86603, -0.50000) -- (-0.86603, 0.50000);
\node[vertex] at (0.86603, -0.50000)     {};
\node[vertex] at (0.86603, 0.50000)     {};
\node[vertex] at (0.00000, -1.00000)     {};
\node[vertex] at (0.00000, 1.00000)     {};
\node[vertex] at (-0.86603, -0.50000)     {};
\node[vertex] at (-0.86603, 0.50000)     {};
\end{tikzpicture}
    \caption{The $12$ color classes of graphs of slices of $C_3$.}
    \label{fig:color-classes}
\end{figure}

We have verified computationally that the numbers reported in \cite{Nakamura1980,Fukuda1997} are indeed the correct counts of such isomorphism classes for $d=3,4,5$.
In our computations, we largely follow their general approach, based on the observation that any affine hyperplane $H = \{x \in \R^d \mid \langle u,x \rangle + a = 0 \}$ induces a vertex labeling $\ell$ of the vertex-edge graph of $P$ with labels $\{+,0,-\}$ via $\ell(v) = \sgn(\langle u, v \rangle + a)$. 
Our goal is to determine all vertex labelings $\ell : V(P) \to \{-,0,+\}$ that are \emph{geometrically realizable}, i.e., for which there exists a hyperplane $H = \{x\in\R^d \mid \langle x, u \rangle + a = 0\}$ satisfying $\ell(v) = \sgn(\langle u, v \rangle + a)$ for every $v\in V(P)$. To this end we collect four necessary conditions for a labeling $\ell$ to be geometrically realizable.

First, convexity of $P$ implies that for every labeling the induced subgraph $S$ generated by the vertices with label $+$ is connected, and the same holds for the induced subgraph of vertices labeled $-$. As the slice is unchanged when interchanging the labels $+$ and $-$, we may assume $|V(S)|\leq \frac{1}{2}|V(P)|$.
Second, every vertex with label $0$ is adjacent to at least one vertex with label $+$, unless the hyperplane supports a face of $P$ (in which case no vertex is labeled $+$). In other words, every vertex with label $0$ lies in the neighborhood 
\[
N(S) = \{v\in V(P) \setminus V(S) \mid \exists \, w \in V(S) : (v,w) \in E(P)\}.
\]
Third, the list of \emph{forbidden squares} in \Cref{fig:forbidden-squares} specifies all labelings of the $4$-cycles in the cube graph $G(C_d)$ that are not geometrically realizable. It is straightforward to verify that this list is complete: every other square labeling occurs in some geometrically realizable labeling.

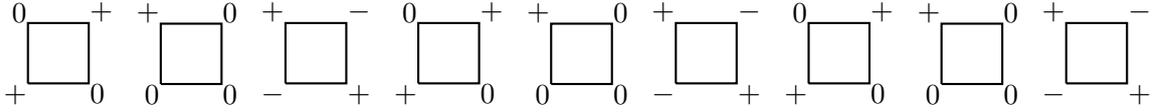
\begin{figure}
    \centering
    \begin{tikzpicture}[scale=0.8, inner sep=0.1mm]
    \draw[thick] (0,0) -- (1,0) -- (1,1) -- (0,1) -- (0,0);
    \node[anchor=north east] at (0,0) {$+$};
    \node[anchor=north west] at (1,0) {$0$};
    \node[anchor=south west] at (1,1) {$+$};
    \node[anchor=south east] at (0,1) {$0$};
\end{tikzpicture}
\hspace{0.1em}
\begin{tikzpicture}[scale=0.8, inner sep=0.1mm]
    \draw[thick] (0,0) -- (1,0) -- (1,1) -- (0,1) -- (0,0);
    \node[anchor=north east] at (0,0) {$0$};
    \node[anchor=north west] at (1,0) {$0$};
    \node[anchor=south west] at (1,1) {$0$};
    \node[anchor=south east] at (0,1) {$+$};
\end{tikzpicture}
\hspace{0.1em}
\begin{tikzpicture}[scale=0.8, inner sep=0.1mm]
    \draw[thick] (0,0) -- (1,0) -- (1,1) -- (0,1) -- (0,0);
    \node[anchor=north east] at (0,0) {$-$};
    \node[anchor=north west] at (1,0) {$+$};
    \node[anchor=south west] at (1,1) {$-$};
    \node[anchor=south east] at (0,1) {$+$};
\end{tikzpicture}
\hspace{0.1em}
\begin{tikzpicture}[scale=0.8, inner sep=0.1mm]
    \draw[thick] (0,0) -- (1,0) -- (1,1) -- (0,1) -- (0,0);
    \node[anchor=north east] at (0,0) {$+$};
    \node[anchor=north west] at (1,0) {$0$};
    \node[anchor=south west] at (1,1) {$+$};
    \node[anchor=south east] at (0,1) {$0$};
\end{tikzpicture}
\hspace{0.1em}
\begin{tikzpicture}[scale=0.8, inner sep=0.1mm]
    \draw[thick] (0,0) -- (1,0) -- (1,1) -- (0,1) -- (0,0);
    \node[anchor=north east] at (0,0) {$0$};
    \node[anchor=north west] at (1,0) {$0$};
    \node[anchor=south west] at (1,1) {$0$};
    \node[anchor=south east] at (0,1) {$+$};
\end{tikzpicture}
\hspace{0.1em}
\begin{tikzpicture}[scale=0.8, inner sep=0.1mm]
    \draw[thick] (0,0) -- (1,0) -- (1,1) -- (0,1) -- (0,0);
    \node[anchor=north east] at (0,0) {$-$};
    \node[anchor=north west] at (1,0) {$+$};
    \node[anchor=south west] at (1,1) {$-$};
    \node[anchor=south east] at (0,1) {$+$};
\end{tikzpicture}
\hspace{0.1em}
\begin{tikzpicture}[scale=0.8, inner sep=0.1mm]
    \draw[thick] (0,0) -- (1,0) -- (1,1) -- (0,1) -- (0,0);
    \node[anchor=north east] at (0,0) {$+$};
    \node[anchor=north west] at (1,0) {$0$};
    \node[anchor=south west] at (1,1) {$+$};
    \node[anchor=south east] at (0,1) {$0$};
\end{tikzpicture}
\hspace{0.1em}
\begin{tikzpicture}[scale=0.8, inner sep=0.1mm]
    \draw[thick] (0,0) -- (1,0) -- (1,1) -- (0,1) -- (0,0);
    \node[anchor=north east] at (0,0) {$0$};
    \node[anchor=north west] at (1,0) {$0$};
    \node[anchor=south west] at (1,1) {$0$};
    \node[anchor=south east] at (0,1) {$+$};
\end{tikzpicture}
\hspace{0.1em}
\begin{tikzpicture}[scale=0.8, inner sep=0.1mm]
    \draw[thick] (0,0) -- (1,0) -- (1,1) -- (0,1) -- (0,0);
    \node[anchor=north east] at (0,0) {$-$};
    \node[anchor=north west] at (1,0) {$+$};
    \node[anchor=south west] at (1,1) {$-$};
    \node[anchor=south east] at (0,1) {$+$};
\end{tikzpicture}
    \caption{The 9 forbidden squares from \cite{Fukuda1997}.}
    \label{fig:forbidden-squares}
\end{figure}

These necessary conditions are essentially those described in \cite{Fukuda1997}.
However, not every labeling that satisfies them is geometrically realizable. For a given labeling $\ell$, the set of hyperplanes realizing it forms a relatively open polyhedral cone, defined by one equation for each vertex labeled $0$ and by one strict inequality for each remaining vertex. Geometric realizability can therefore be tested by computing the closed cone $C(\ell)$ defined by non-strict inequalities (and equations), and comparing its dimension with the expected one, yielding the fourth condition.
With this observation we obtain the following sketch of an algorithm for checking realizability. We point out that we do not know whether, or in what way, a check for this fourth condition was performed in the computations reported in \cite{Fukuda1997}, but it proved necessary in our computation in order to obtain correct labellings (see \Cref{ex:invalid-labelling}).

In the actual implementation of \Cref{alg:graph}, carried out in \texttt{SageMath 10.5} \cite{sagemath}, we include checks to avoid repeating computations on labelled graphs that are identical up to labelled graph isomorphism, taking advantage of the high symmetry of the cube. The $12$ color classes of graphs of slices of $C_3$ are depicted in \Cref{fig:color-classes}. The $61$ and $484$ classes for $C_4$ and $C_5$, respectively, are printed in \cite{Fukuda1997}. For detailed inspection of these $12+61+484$ classes, we provide the complete data set of color classes as raw data in our repository.

\begin{example}[Necessary conditions are not sufficient]
\label{ex:invalid-labelling}
Consider the cube $C_3$ labeled as 
\begin{center}
    \begin{tikzpicture}[inner sep = 0.3pt, scale = 0.8]
    \draw[thick] (0+0.5,0+0.3) -- (1+0.5,0+0.3) -- (1+0.5,1+0.3) -- (0+0.5,1+0.3) -- (0+0.5,0+0.3);
    \draw[line width=0.5em, white] (0,0) -- (1,0) -- (1,1) -- (0,1) -- (0,0);
    \draw[thick] (0,0) -- (1,0) -- (1,1) -- (0,1) -- (0,0);
    \draw (0,0) -- (0.5,0.3);
    \draw (1,0) -- (1.5,0.3);
    \draw (0,1) -- (0.5,1.3);
    \draw (1,1) -- (1.5,1.3);
    \node[anchor = north east] at (0,0) {$-$};
    \node[anchor = north west] at (1,0) {$-$};
    \node[anchor = south east] at (0,1) {$-$};
    \node[anchor = north west] at (1.05,1) {$0$};
    \node[anchor = south east] at (0.45,0.3) {$0$};
    \node[anchor = north west] at (1.5,0.3) {$-$};
    \node[anchor = south east] at (0.5,1.3) {$0$};
    \node[anchor = north west] at (1.5,1.3) {$\ -$};
\end{tikzpicture}
\end{center}
This labeling satisfies all three necessary conditions from \cite{Fukuda1997}: the induced subgraphs of vertices with labels $+$ and $-$ are connected; every vertex labeled $0$ is adjacent to a vertex labeled $+$; no forbidden square occurs. Nevertheless, the unique hyperplane containing the vertices with label $0$ necessarily passes through four vertices of the cube, so this labeling is not geometrically realizable.
\end{example}

\begin{algorithm}
    \Require Polytope $C_d = [-1,1]^d$
    \Ensure List of colored vertex-edge graphs of slices of $C_d$ 
        \begin{algorithmic}[1]
        \State Compute vertex-edge graph $G(C_d) = (V(C_d),E(C_d))$
        \State \texttt{color\_classes} $\gets$ empty
        \For{connected induced subgraphs $S$ of $G(C_d)$ of size $|V(S)| \leq \frac{1}{2}|V(C_d)|$}
        \State compute the neighborhood $N(S)$
        \State assign label $+$ to all vertices in $S$
        \State assign label $-$ to all vertices in $V(C_d) \setminus \{V(S) \cup N(S)\}$
        \For{$\{-,0\}$-labelings of $N(S)$}
        \If{the labeling $\ell$ of $V(G)$ does not contain a forbidden square}
            \State compute the cone $C(\ell)$
            \If{$C(\ell)$ has relative interior of the expected dimension}
                \State sample a hyperplane $H$ from the relative interior of the cone
                \State compute the graph $K$ of $[-1,1]^d \cap H$
                \If{$K$ not in \texttt{color\_classes} (up to colored graph isomorphism)}
                    \State store the colored graph of the slice 
        \EndIf
        \EndIf
        \EndIf
        \EndFor
        \EndFor
        \end{algorithmic}
    \caption{Graph algorithm}
    \label{alg:graph}
\end{algorithm}

\subsection{Graphs of slices of the cube}\label{subsec:graphs}

In dimensions $4$ and higher, polytopes of distinct combinatorial types (in the sense of isomorphism classes of face-lattices) can, in general, have identical vertex-edge graphs. It is therefore not expected that arbitrary slices of cubes of dimension $5$ should be distinguishable by their graphs, or their respective color class. However, analyzing the computed color classes, we observe that indeed all combinatorial types are distinguished by their vertex–edge graphs, thus justifying the computational method explained in \Cref{sec:graph_alg}.
While this holds true for $d \leq 5$,
we do not know whether this remains true for slices of cubes in higher dimensions.

\begin{question}
    Do all distinct combinatorial types of slices of the cube $C_d$ have nonisomorphic vertex-edge graphs? Equivalently, are slices of cubes reconstructible from their graphs, given the additional information that they are indeed slices of the cube?
\end{question}

We now collect evidence both for and against an affirmative answer to this question.
First, note that that every simple polytope is reconstructible from its graph (\cite{Blind1987,Kalai1988}). More generally, polytopes with at most two nonsimple vertices are reconstructible from their graphs \cite{Doolittle2018}. This is key to the following statement.

\begin{proposition}
    Any slice of the cube $C_d$ which contains at most $2$ vertices of $C_d$ is reconstructible from its graph. 
\end{proposition}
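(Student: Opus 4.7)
The plan is to combine the theorem of Doolittle cited just above the proposition, which asserts that polytopes with at most two nonsimple vertices are reconstructible from their graphs, with the observation that the only vertices of the slice that can be nonsimple are those which also happen to be vertices of $C_d$. Concretely, I will show that the set of nonsimple vertices of $Q := C_d \cap H$ is contained in $\vertices(C_d) \cap Q$, whence the hypothesis directly gives the bound needed to apply Doolittle's result.

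The first step is the following case analysis for vertices of $Q$: every vertex $v$ of $Q$ is either (i) a vertex of $C_d$ lying on $H$, or (ii) of the form $v = e \cap H$ for some edge $e$ of $C_d$ meeting $H$ transversally in a single point of its relative interior. If $v$ lay in the relative interior of a face $F$ of $C_d$ of dimension $k \geq 2$ not contained in $H$, then $H \cap \aff(F)$ would be an affine subspace of $\aff(F)$ of dimension $k-1 \geq 1$ passing through $v$, so $F \cap H$ would be a positive-dimensional polytope containing $v$ in its interior, contradicting that $v$ is a vertex of $Q$. The case where $H$ contains $F$ is handled analogously, since then the vertices of $F \cap H = F$ are vertices of $C_d$.

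The key simplicity claim is that every vertex of type (ii) is simple in $Q$, i.e.\ has degree $d-1$ in the graph of $Q$. Here I use that $C_d$ is a simple polytope: the edge $e$ containing $v$ is the intersection of exactly $d-1$ facets of $C_d$, and relaxing each one in turn produces the $d-1$ distinct two-dimensional faces of $C_d$ containing $e$. For each such $2$-face $F$, the intersection $F \cap H$ is an edge of $Q$ incident to $v$, and conversely every edge of $Q$ incident to $v$ arises this way. Therefore $v$ has exactly $d-1$ neighbors in $G(Q)$, and since $\dim Q = d-1$, this is precisely the simplicity condition.

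Combining the two ingredients, the nonsimple vertices of $Q$ form a subset of $\vertices(C_d) \cap Q$, which by hypothesis has cardinality at most $2$. Doolittle's reconstructibility theorem then immediately yields the claim. The only mildly delicate point in this plan is the case analysis in step one, because one must carefully exclude the possibility that an unexpected higher-dimensional face of $C_d$ contributes a vertex of $Q$ through a tangency, but the argument above using $\dim(H \cap \aff(F))$ disposes of this uniformly.
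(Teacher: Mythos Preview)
Your proposal is correct and follows essentially the same approach as the paper: both reduce to the Doolittle et al.\ result by showing that a vertex of the slice lying in the relative interior of an edge $e$ of $C_d$ is simple, via the bijection between edges of the slice through $v$ and $2$-faces of $C_d$ containing $e$. The only difference is cosmetic: the paper counts the $2$-faces containing $e$ using the product decomposition $C_d = C_{d-1}\times C_1$, whereas you obtain the same count $d-1$ directly from simplicity of $C_d$ (each $2$-face through $e$ arises by relaxing one of the $d-1$ facets containing $e$), which in fact works verbatim for any simple polytope.
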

\begin{proof}
    Recall that a vertex of a slice $C_d \cap H$ is either a vertex of $C_d$ or arises as the intersection of $H$ with the relative interior of an edge $e$.
    Since any polytope with at most $2$ nonsimple vertices is reconstructible, it suffices to show that a vertex of the slice $C_d \cap H$ of the form $v = \operatorname{relint}(e) \cap H$ is simple, i.e., has degree $d-1$. 

    The edges of the slice incident to $v$ are intersections of squares of $C_d$ containing $e$. Since $H$ intersects $e$ in its relative interior, all adjacent squares are intersected in their respective relative interiors as well. We thus need to show that $e$ is contained in precisely $d-1$ squares. For $d \geq 2$ we have $C_d = C_{d-1} \times C_1$, and by symmetry it suffices to check the statement for the edge $e = (-1,\dots,-1,\lambda)$, $\lambda\in[-1,1]$. The squares of $C_d$ containing $e$ are precisely the sets $\tilde{e} \times C_1$, where $\tilde{e}$ is an edge of $C_{d-1} \times \{-1\}$ containing the vertex $(-1,\dots,-1)\in C_{d-1}$. As $C_{d-1}$ is simple, the number of such edges is $d-1$.
\end{proof}

On the other hand, \cite[Example 23]{PinedaVillavicencio2022} provide a polytope $Q$ whose graph is that of the hypersimplex $\Delta(2,5)$ -- itself a slice of $C_5$ -- but $Q$ and $\Delta(2,5)$ are not combinatorially equivalent. 
The key point is that $Q$ cannot be realized as a slice of a cube. Therefore, the additional information that the graph comes from a slice of a cube is genuinely needed to reconstruct such slices from their graphs up to dimension $5$. It remains open whether this is true in all dimensions. Likewise, the more restrictive question of whether a slice is reconstructible from its \emph{colored} graph is also unresolved in general.

Finally, although the $d$-dimensional cube is simple -- therefore uniquely determined by its vertex-edge graph as a $d$-dimensional polytope -- there exists, for every $k$ with $3\le k<d$, a $k$-dimensional polytope $P$ whose graph is isomorphic to that of the $d$-cube \cite{Joswig2000}. More generally, for integers $r,k$ satisfying $2r + 2 \leq k \leq d$, there exists a $k$-polytope whose $r$-skeleton is combinatorially equivalent to that of the $d$-cube.
\begin{question}
    Let $P$ be a $k$-dimensional polytope whose graph (or more generally whose $r$-skeleton) is isomorphic to that of the $d$-dimensional cube. How do the graphs of $(k-1)$-dimensional slices of $P$ compare with the graphs of the $(d-1)$-dimensional slices of the cube?
\end{question}

\printbibliography

~\\
\vfill
\noindent \textsc{Marie-Charlotte Brandenburg} \\
\textsc{Ruhr-Universität Bochum} \\
 \url{marie-charlotte.brandenburg@rub.de} \\

\noindent \textsc{Chiara Meroni} \\
\textsc{ETH Institute for Theoretical Studies} \\ 
\url{chiara.meroni@eth-its.ethz.ch} \\

\newpage
\begin{appendices}
\crefalias{section}{appendix}
\section{Distribution of slices}\label{app:distribution}

We show the distribution of the combinatorial types of slices according to the number of vertices of the slice. For the cubes $C_5$ (see \Cref{fig:distribution_5cube_all}) and $C_6$ (see \Cref{fig:distribution_6cube_all}), we display several classes of slices: all affine slices (light green), affine generic slices (dark green), all central slices (light blue), and generic central slices (dark blue). This analysis for the cube $C_4$ was already shown in \Cref{fig:4_cube_vertices}, with the same color-coding. For the cube $C_7$, we display all central slices (light blue) and generic central slices (dark blue) in \Cref{fig:distribution_7cube_all}.

\begin{figure}[!ht]
    \centering
    \rotatebox{90}{\hspace{3.5em} affine}
    \includegraphics[width=0.47\linewidth]{Figures/5cubeVertexDistribution_affine.png}
    \includegraphics[width=0.47\linewidth]{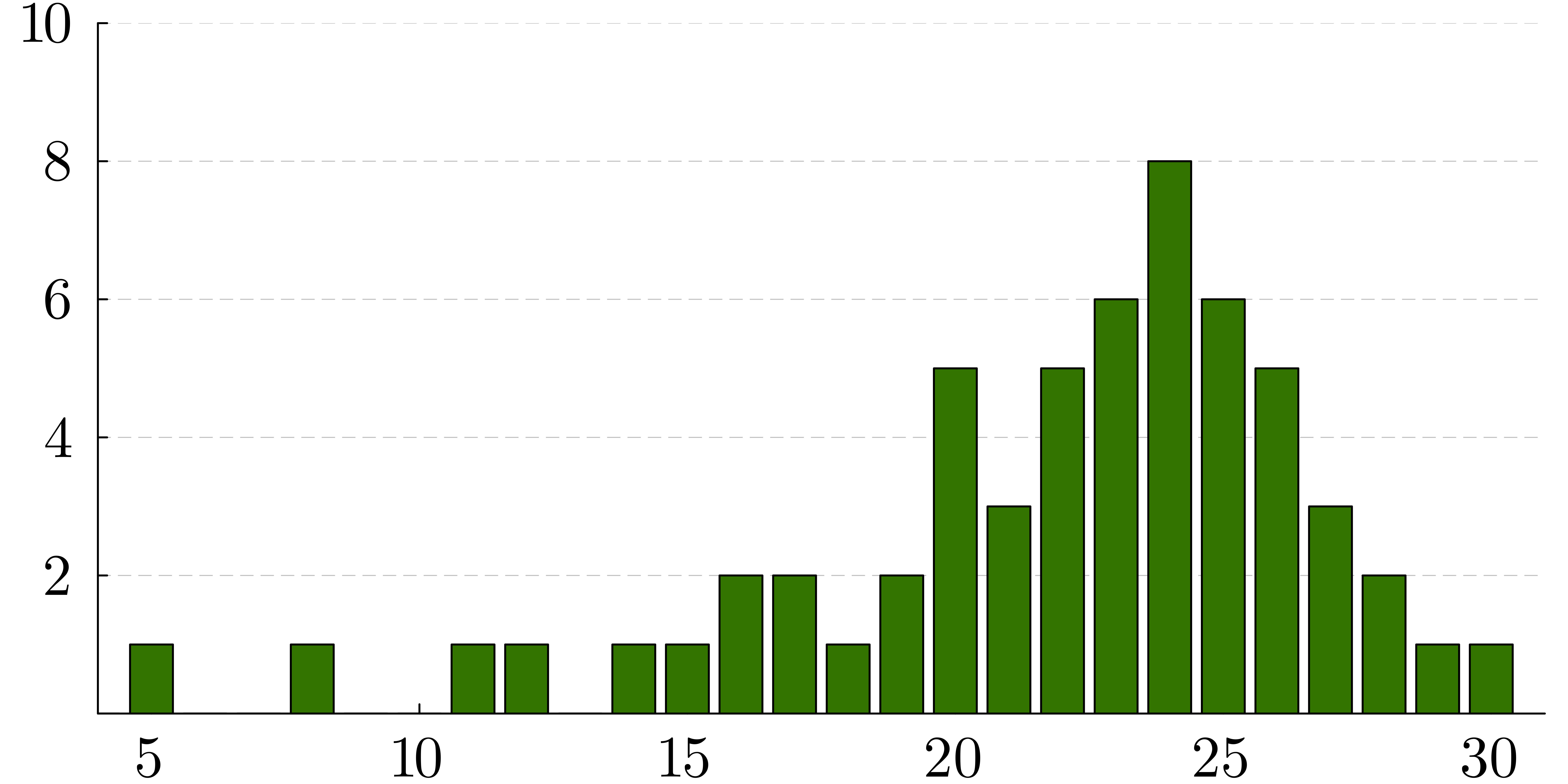}
    \\
    \rotatebox{90}{\hspace{3.4em} central}
    \includegraphics[width=0.47\linewidth]{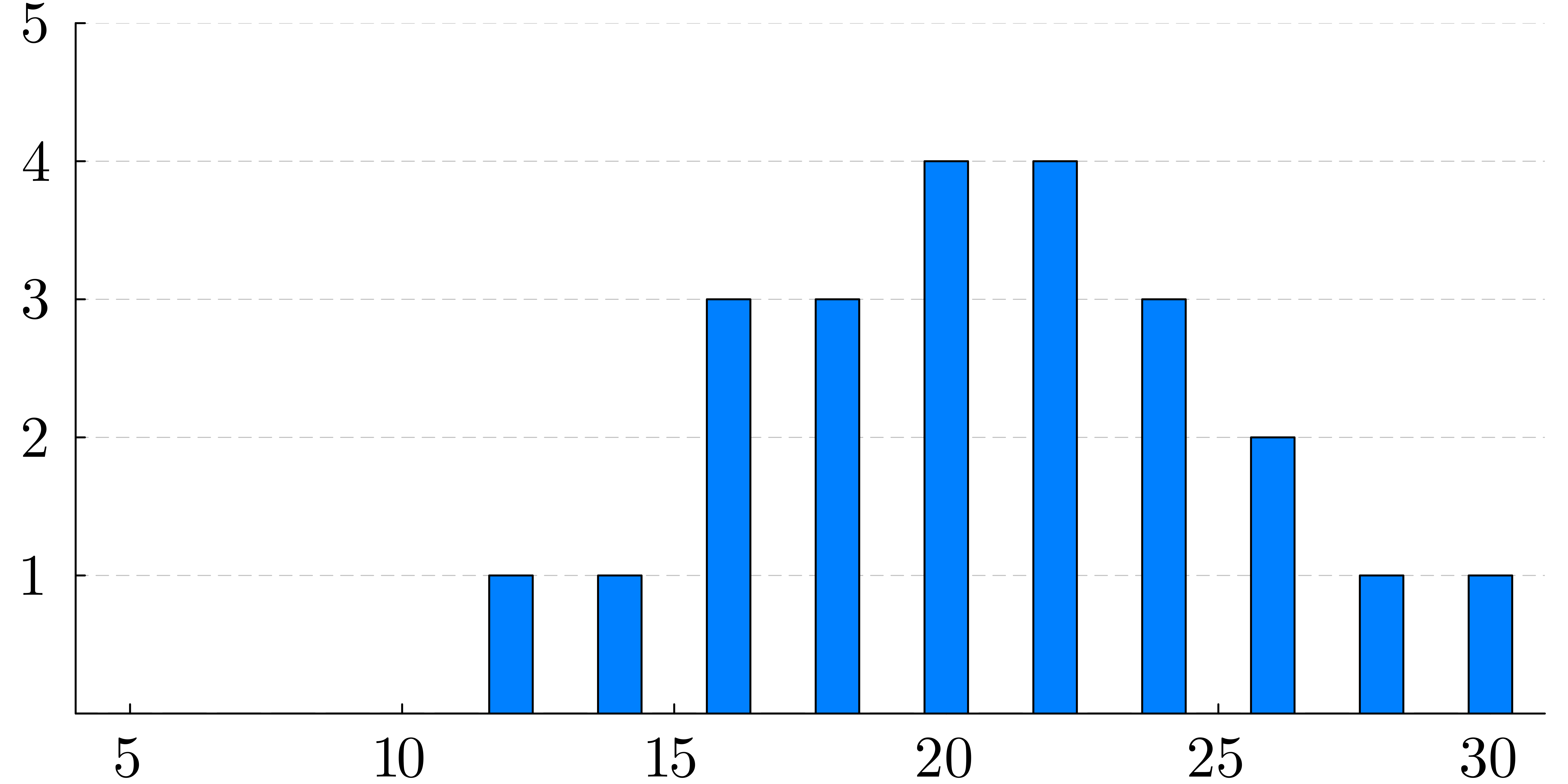}
    \includegraphics[width=0.47\linewidth]{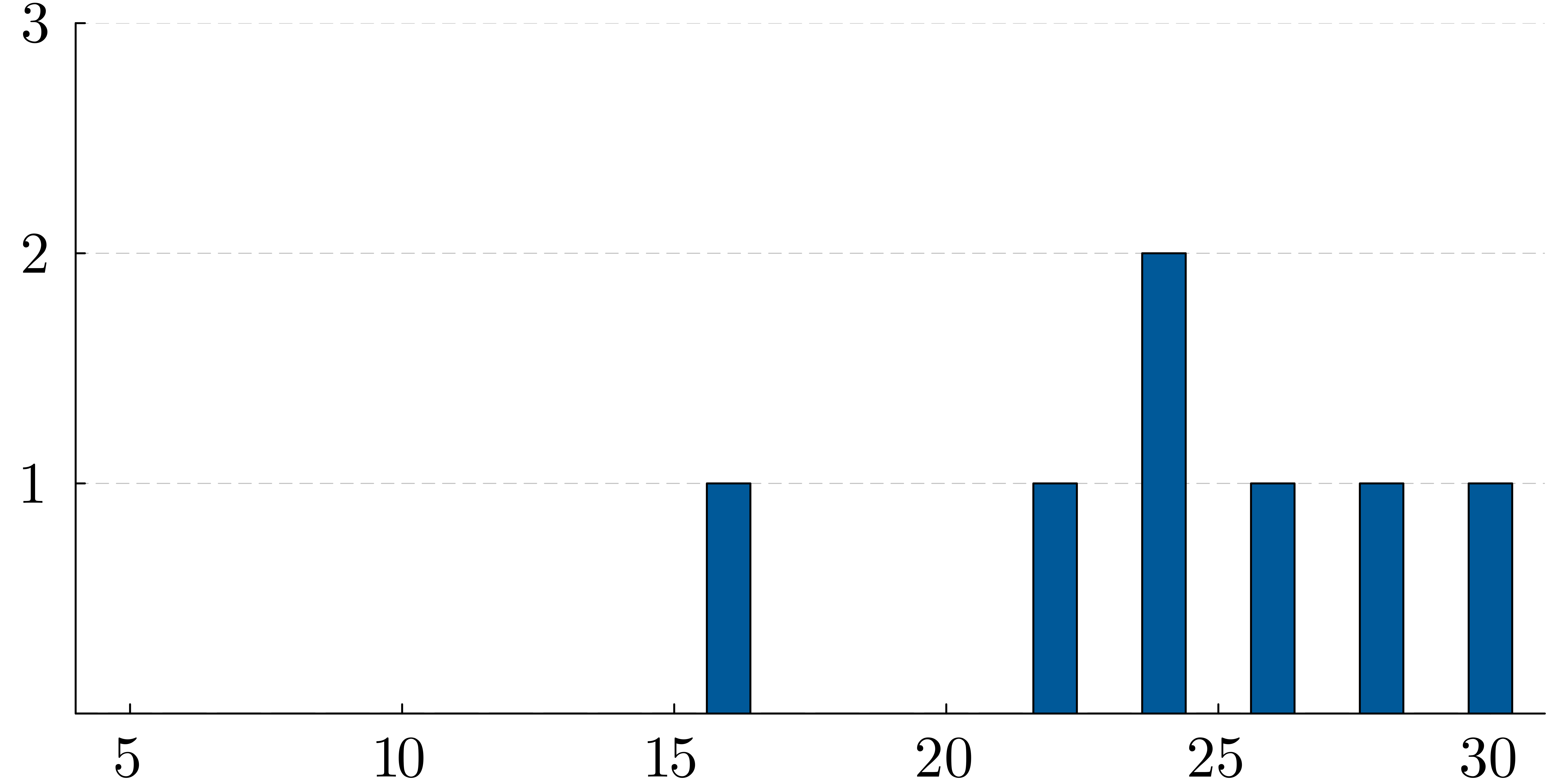}
    \caption{Number of combinatorial types of slices of $C_5$ by number of vertices. Notice that the vertical axis has a different scaling in each plot.}
    \label{fig:distribution_5cube_all}
\end{figure}

\begin{figure}[!ht]
    \centering
    \rotatebox{90}{\hspace{3.5em} affine}
    \includegraphics[width=0.47\linewidth]{Figures/6cubeVertexDistribution_affine.png}
    \includegraphics[width=0.47\linewidth]{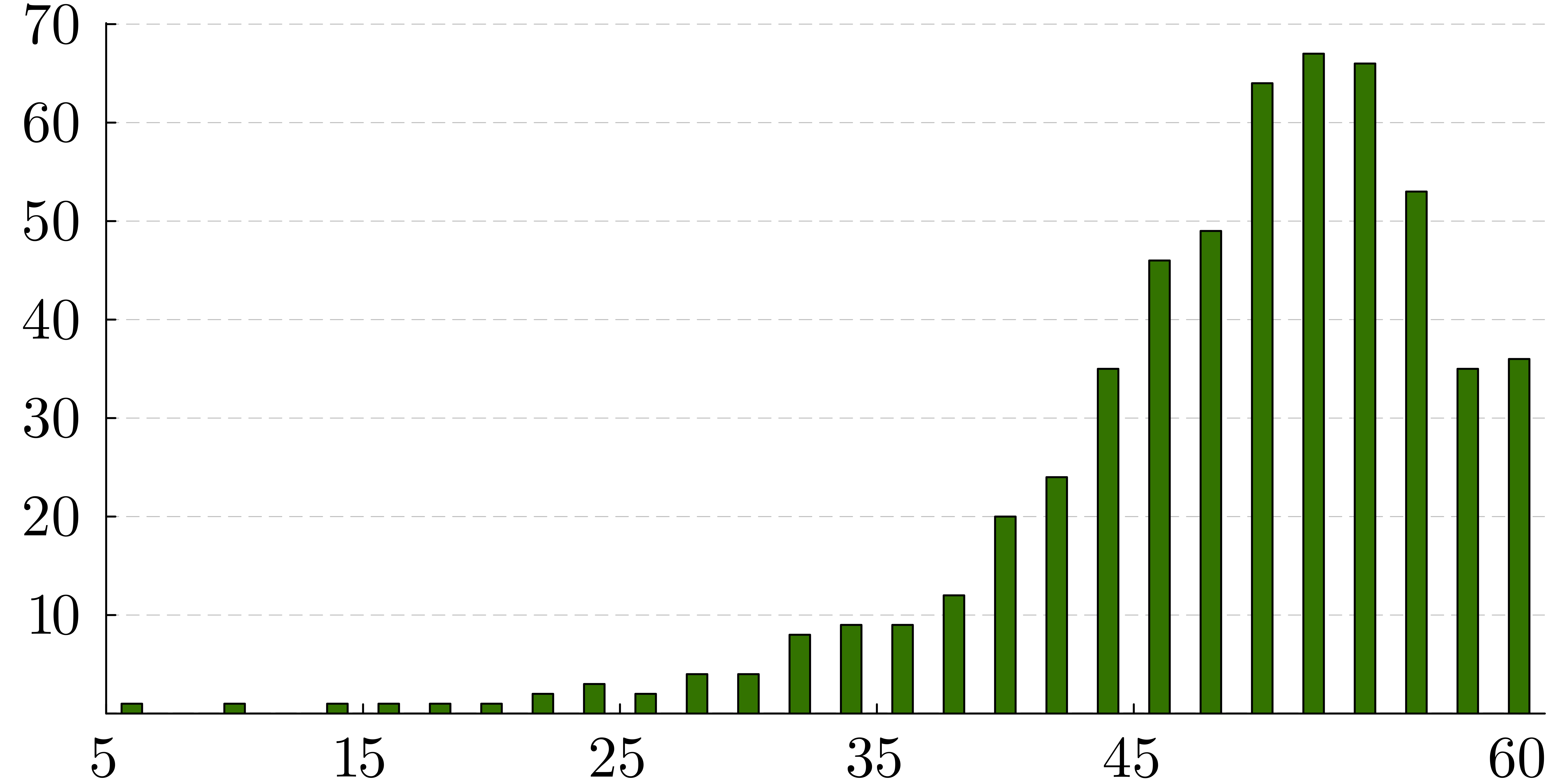}
    \\
    \rotatebox{90}{\hspace{3.4em} central}
    \includegraphics[width=0.47\linewidth]{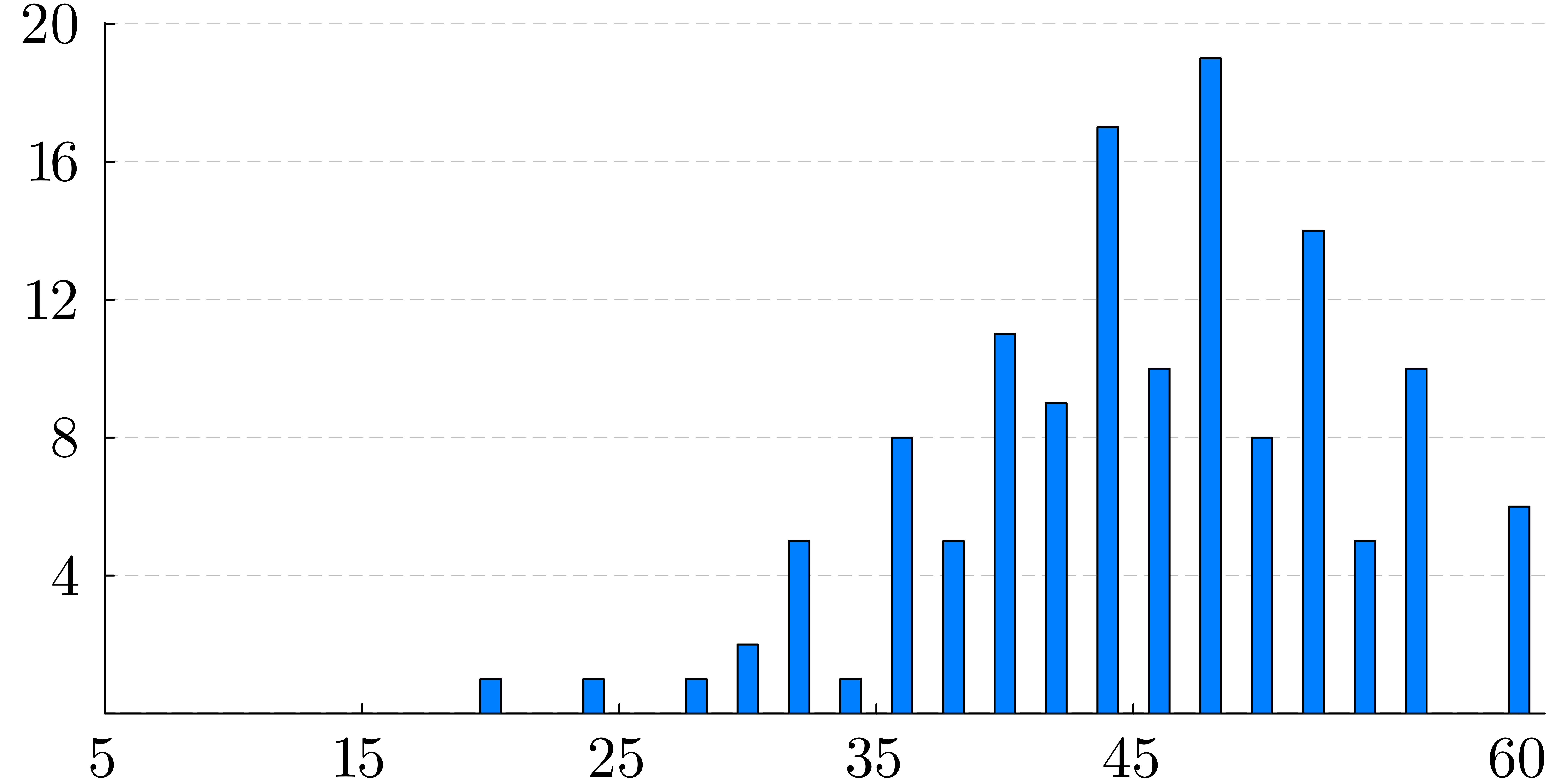}
    \includegraphics[width=0.47\linewidth]{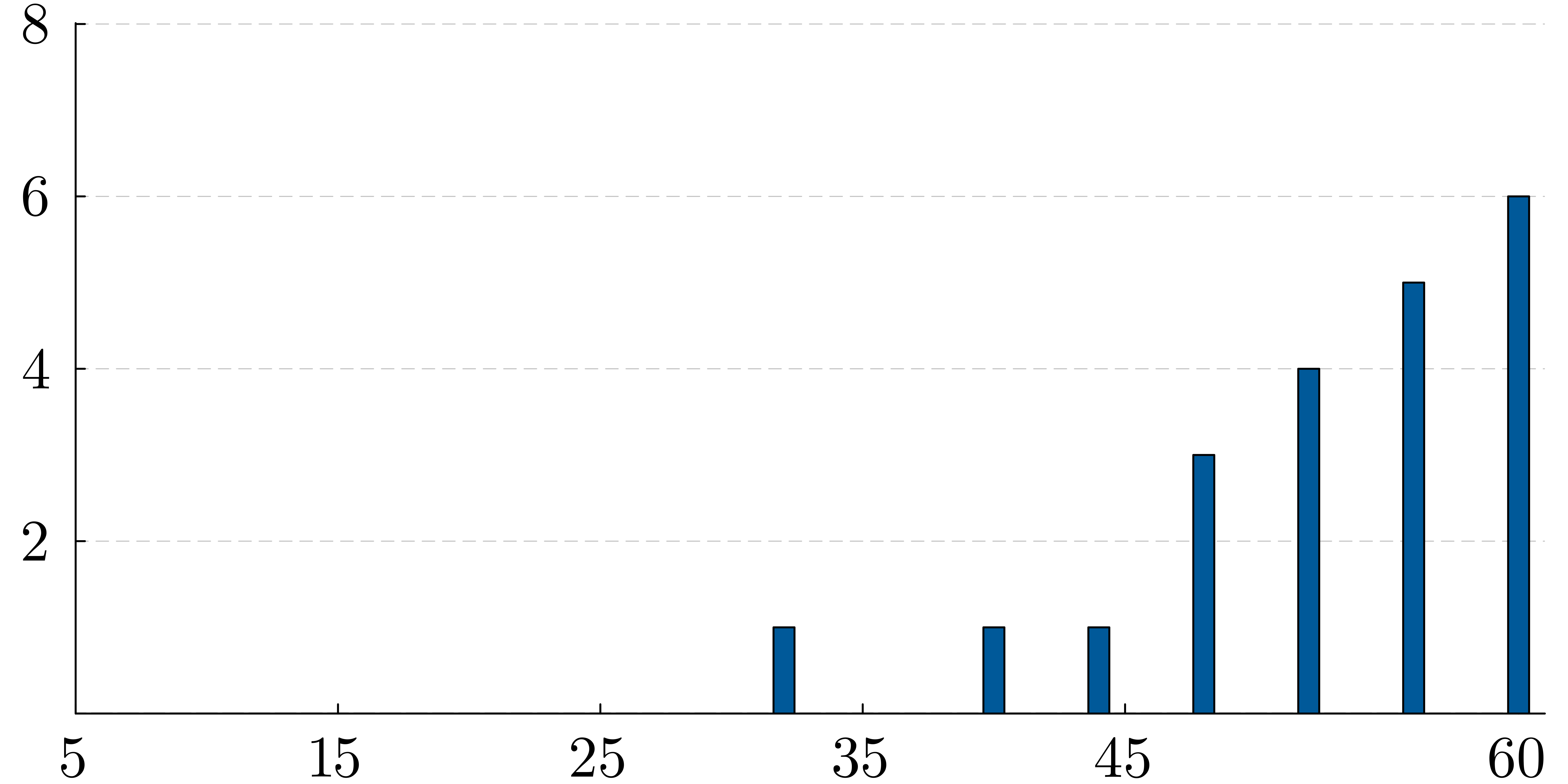}
    \caption{Number of combinatorial types of slices of $C_6$ by number of vertices. Notice that the vertical axis has a different scaling in each plot.}
    \label{fig:distribution_6cube_all}
\end{figure}

\begin{figure}[!ht]
    \centering
    \rotatebox{90}{\hspace{3.4em} central}
    \includegraphics[width=0.47\linewidth]{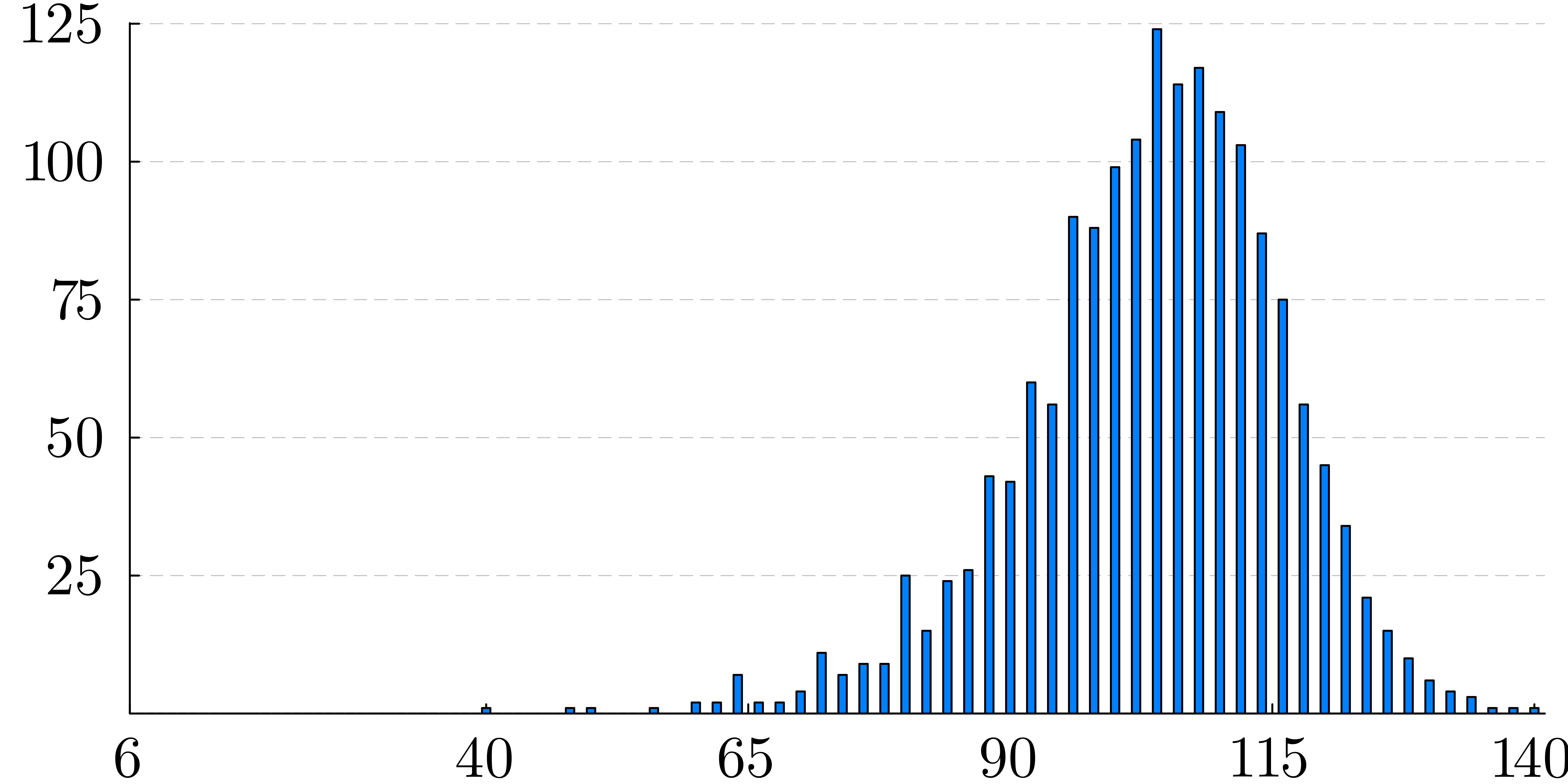}
    \includegraphics[width=0.47\linewidth]{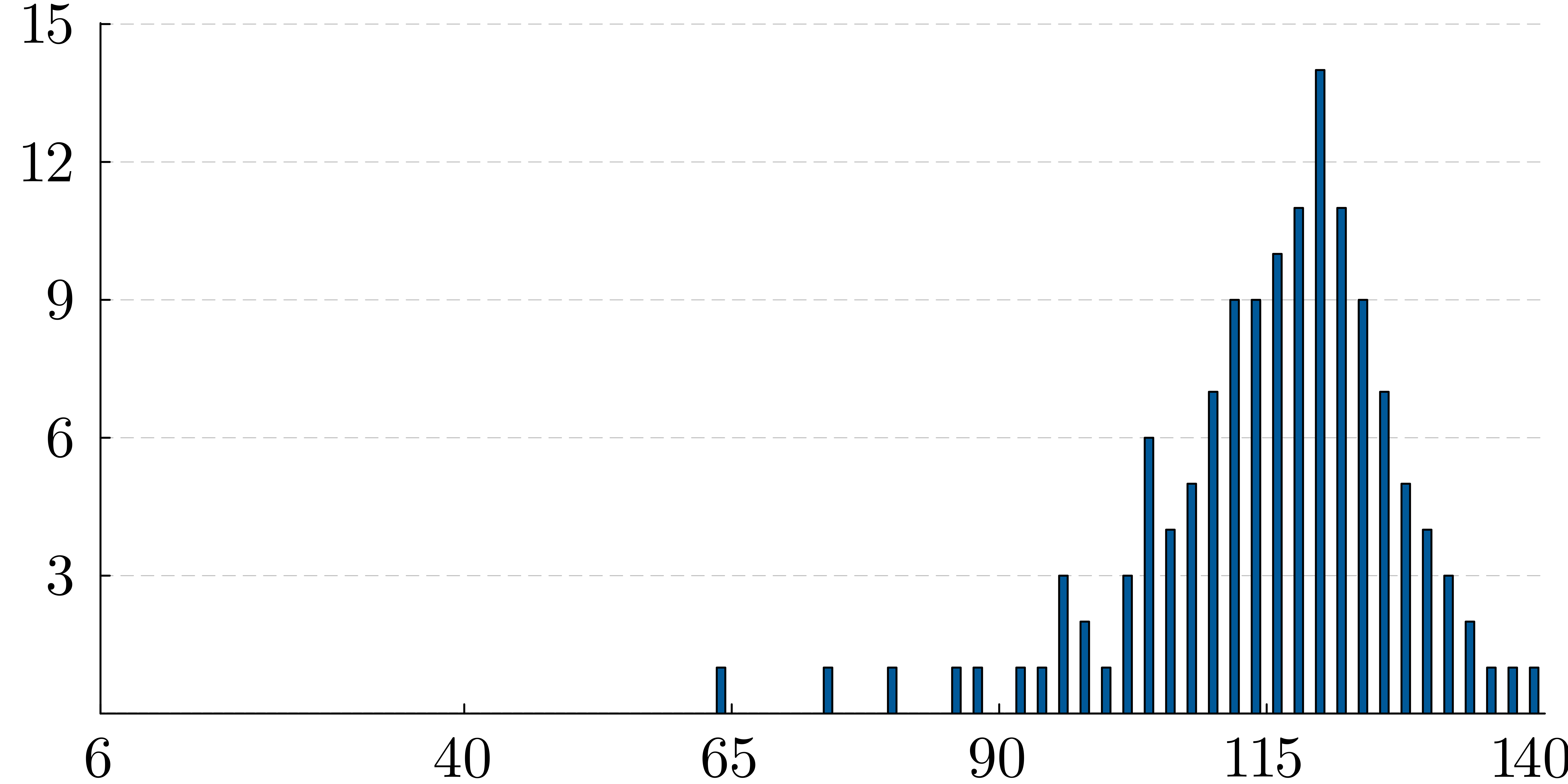}
    \caption{Number of combinatorial types of central slices of $C_7$ by number of vertices. Notice that the vertical axis has a different scaling in each plot.}
    \label{fig:distribution_7cube_all}
\end{figure}

\section{Repository}\label{app:repo}

All data produced and used in our computations are collected in a public repository \begin{center}
    \url{https://doi.org/10.5281/zenodo.17304584}
\end{center}
This appendix is intended to serve as a guide to the repository so that readers can locate and interpret all the data generated in our computations.

The repository provides both intermediate and final output data related to the computation of slices of cubes $C_d$.  
To guarantee reproducibility, we include the Julia environment files \texttt{Project.toml} and \texttt{Manifest.toml}, which specify the exact Julia version and package dependencies used in the notebook \texttt{NAGSlicesCubes.ipynb}. This notebook provides the computational workflow for \Cref{alg:all}, using the certified numerical algorithm \Cref{alg:numerical}.
 The main functions are collected in \texttt{Functions.jl}. The implementation of the exact \Cref{algo:exact} is already available at \url{https://mathrepo.mis.mpg.de/BestSlicePolytopes}.

The bulk of the repository consists of the results of the computations discussed in \Cref{sec:algo}.  
Several folders record data for each cube dimension of $d = 4,5,6,7$, with a dedicated subfolder for every dimension.  
Two sets of data correspond to different stages of the computation.

\paragraph{Preliminary computations.}
Intermediate steps of the algorithm are stored in the folder `Data-VertexTuples', which contains two types of files.
The file \texttt{dcubeVertexTuples.txt} lists all vertex tuples $\mathfrak{v}$ of $C_d$ (up to the action of the symmetry group $B_d$) that are needed to restrict the initial hyperplane arrangement, in step \ref{algo_step:vert_tuples} of \Cref{alg:numerical}. The computation of these tuples is discussed in \Cref{subsec:symmetries}.
In each file, the $i$-th line corresponds to tuples of $i+1$ linearly independent vertices (in the case of one vertex, there is a unique candidate up to $B_d$, so any choice of vertex would do the job).  
The file \texttt{dcubeCert.txt} records, for each such tuple $\mathfrak{v}$, the number of cells in the restricted hyperplane arrangement $\mathcal{H}_{A|_\mathfrak{v}}$, using the same line-by-line convention. These are the numbers stored in \Cref{tab:number_cells}.

\paragraph{Final slice data.}
The final output of \Cref{alg:all} is contained in the folder `Data-Slices', which is organized into two subfolders: `JSON', containing the files in \texttt{.mrdi} format \cite{DVJL24:mrdiFormat}, `TXT', containing the files in \texttt{.txt} format.
While the \texttt{.txt} format is easily readable, it contains numerical imprecision. On the other hand, the \texttt{.mrdi} format can be interpreted by default only with Julia (see \cite[Section 3]{DVJL24:mrdiFormat} for usage in other software systems), and produces numerical polytopes in OSCAR without numerical inconsistencies. 

Each of these two folders contains four subfolders named `4cube', `5cube', `6cube', and `7cube', corresponding to the dimension of the ambient cube.  
Within each of these subfolders, data files are organized according to the following naming scheme.  
Files ending in \texttt{@int} list all combinatorial types of generic slices (those containing no vertices of $C_d$), files ending in \texttt{@nv} list all combinatorial types of slices passing through $n$ linearly independent vertices, and files ending in \texttt{@tot} give the complete set of combinatorial types.  
The symbol \texttt{@} encodes the type of data: \texttt{f} for $f$-vectors, \texttt{u} for slice normals, and \texttt{s} for slice vertices.  
When an additional \texttt{c} follows \texttt{@} (for example \texttt{fcint}), the file contains only central slices, i.e., slices passing through the origin.

\paragraph{Graph data.}
A further component of the repository records the slices in terms of their colored vertex-edge graphs, up to color class, as discussed in \Cref{sec:graphs}.  
The folder `Graphs' contains, for each dimension $d=3,4,5$, a file \texttt{dcubeGraphVerts.txt} obtained as output of \Cref{alg:graph}.  
Every line of such a file lists the vertices of one slice of the $d$-dimensional cube, taken up to colored graph isomorphism.  
From these vertex sets one can reconstruct the colored graph of each slice by computing the convex hull to obtain the corresponding polytope, extracting its graph, and finally assigning colors to the vertices: a vertex is colored white if all its coordinates are either $+1$ or $-1$, and black otherwise.

\end{appendices}

\end{document}